\documentclass[leqno]{article}
\usepackage{graphicx}
\usepackage{mathtools}
\mathtoolsset{showonlyrefs}
\usepackage{comment}
\usepackage[abbrev,lite,nobysame]{amsrefs}
\usepackage[svgnames]{xcolor}
\usepackage{amsthm,amsfonts,amssymb}
\usepackage{mathrsfs}
\usepackage{hyperref}
\usepackage{cleveref}
\usepackage{bm}
\usepackage{fontawesome}
\usepackage{csquotes}
\usepackage{mathabx}
\usepackage{ulem}
\usepackage{tikz}
\usetikzlibrary{math}
\usepackage{pgfplots}
\usepgflibrary{patterns}
\usetikzlibrary{arrows,shapes,patterns.meta,decorations.pathmorphing,decorations.text,positioning,arrows,decorations.markings,shapes.arrows} 
\usetikzlibrary{arrows.meta, positioning, calc}
\pgfplotsset{compat=1.10}

\newcommand{\eps}{\varepsilon}
\renewcommand{\phi}{\varphi}
\DeclareMathOperator\supp{supp}
\DeclareMathOperator\Span{span}
\newcommand{\R}{\mathbb{R}}
\newcommand{\C}{\mathbb{C}}
\renewcommand{\L}{\mathcal{L}}
\renewcommand{\H}{\mathcal{H}}
\newcommand{\Z}{\mathbb{Z}}

\theoremstyle{plain}
\newtheorem{conj}{Conjecture}[section]
\newtheorem{thm}[conj]{Theorem}
\newtheorem{prop}[conj]{Proposition}
\newtheorem{lemma}[conj]{Lemma}
\theoremstyle{definition}
\newtheorem{dfn}[conj]{Definition}
\newtheorem{remark}[conj]{Remark}

\title{Uniform null-controllability times for the Coron--Guerrero problems are $2$ and $2 + 2 \sqrt{2}$}

\begin{document}\linespread{1.06}\selectfont

	\author{
    Kai Koike\,\footnote{
    Department of Mathematics, Hiyoshi Campus,
    Keio University,
    Yokohama 223-8521, Japan, e-mail: kaikoike.math@keio.jp}
    \and
    Vincent Laheurte\,\footnote{
    Department of Mathematics, Maison du nombre, 6 avenue de la Fonte, University of Luxembourg, L-4364 Esch-sur-Alzette, Luxembourg, E-mail address: vincent.laheurte@uni.lu}
    }
	
	\maketitle

    \tableofcontents

    \newpage

    \begin{abstract}
        We study the uniform null-controllability, in the vanishing viscosity limit, of the transport--diffusion equation $u_t + M u_x = \eps u_{xx}$ on $[0,L]$ with a Dirichlet boundary control at $x=0$. Our results concern the uniform null-controllability time $T_{\mathrm{unif}}$, defined as the infimum of the times for which the null-controllability cost remains bounded as $\eps \to 0$. We determine this time exactly for both signs of the transport velocity: $T_{\mathrm{unif}} = 2 L / M$ in the positive-speed case $M>0$, and $T_{\mathrm{unif}} = (2 + 2 \sqrt{2}) L / |M|$ in the negative-speed case $M<0$. The positive-speed result disproves the conjecture $T_{\mathrm{unif}}=L/M$ suggested by Coron and Guerrero. For negative speed, the conjectured threshold had already been disproved by Lissy. Our result determines the exact threshold. We establish the lower bounds by constructing adjoint solutions that violate uniform observability below the respective thresholds. In the positive-speed case, the construction exploits the asymptotic structure of the Blaschke products and model spaces associated with the exponential family generated by the adjoint spectrum. The same model-space structure is used to reduce the upper-bound problems for both signs of $M$ to infinite-time observability inequalities, which are proved through a representation of the boundary-to-interior map and estimates of its Hilbert--Schmidt norm.
    \end{abstract}

\section{Introduction}
The purpose of this article is to study the uniform null-controllability, in the vanishing viscosity limit, of the transport--diffusion equation
\begin{equation}\label{eq:transport-diffusion}
    \begin{dcases}
        u_t + M u_x = \eps u_{xx} & \text{in $(0,T) \times (0,L)$}, \\
        u(\cdot,0)=f, \quad u(\cdot,L)=0 & \text{in $(0,T)$}, \\
        u(0,\cdot)=u_0 & \text{in $(0,L)$}.
    \end{dcases}
\end{equation}
Here, $u \colon [0,T] \times [0,L] \to \R$ is a scalar unknown, $u_0 \colon [0,L] \to \R$ is the initial data, $M$ is a real number, $L,\eps>0$ are positive constants, and $f \colon [0,T] \to \R$ is the boundary control. By the classical theory of Fattorini and Russell on controllability of 1D linear parabolic equations~\cite{FattoriniRussell1971}, for any initial data $u_0 \in L^2(0,L)$ and time $T>0$, there exists a control $f \in L^2(0,T)$ such that the solution $u$ to~\eqref{eq:transport-diffusion} satisfies $u(T,\cdot) \equiv 0$. Denote by $f_{u_0}$ such a control of minimal $L^2(0,T)$-norm and set
\begin{equation}\label{eq:cost}
    C_{\eps}(T,L,M)
    \coloneqq
    \sup_{\| u_0 \|_{L^2(0,L)} \leq 1}
    \| f_{u_0} \|_{L^2(0,T)}.
\end{equation}
Our main object is the uniform null-controllability time
\begin{equation}\label{eq:def_T_unif}
    T_{\mathrm{unif}}(L,M)
    \coloneqq
    \inf\left\{
        T>0:
        \limsup_{\eps\to0} C_{\eps}(T,L,M)<+\infty
    \right\}.
\end{equation}
Thus $T_{\mathrm{unif}}(L,M)$ is the threshold time beyond which null-controllability can be achieved with a cost that remains bounded in the vanishing viscosity limit. For $M\neq0$, an elementary scaling shows that
\begin{equation}\label{eq:scaling_T_unif}
    T_{\mathrm{unif}}(L,M)
    =
    \frac{L}{|M|}
    T_{\mathrm{unif}}\bigl(1,\operatorname{sgn}M\bigr),
\end{equation}
so that only the sign of $M$ matters after normalization.

There is a fundamental distinction between the cases $M=0$ and $M \neq 0$. When $M=0$, equation~\eqref{eq:transport-diffusion} reduces to the heat equation, for which it is classical that
\begin{equation}
    T_{\mathrm{unif}}(L,0) = +\infty;
\end{equation}
see~Seidman~\cite{Seidman1984} and G\"{u}ichal~\cite{Guichal1985}. In contrast, Coron and Guerrero~\cite{CoronGuerrero2005} proved that $T_{\mathrm{unif}}(L,M)$ is finite if $M \neq 0$. More precisely, in the positive speed case $M>0$, they proved that $TM/L>4.3$ implies $\lim_{\eps \to 0}C_{\eps}(T,L,M)=0$, whereas $\lim_{\eps \to 0}C_{\eps}(T,L,M)=+\infty$ if $TM/L < 1$. Hence
\begin{equation}\label{eq:CG_bounds_Tunif}
    1
    \leq
    \frac{T_{\mathrm{unif}}(L,M) M}{L}
    \leq
    4.3
    \qquad (M>0).
\end{equation}
For $M<0$, they obtained the corresponding bounds
\begin{equation}\label{eq:CG_bounds_Tunif_negative}
    2
    \leq
    \frac{T_{\mathrm{unif}}(L,M) |M|}{L}
    \leq
    57.2
    \qquad (M<0).
\end{equation}
The lower bound in~\eqref{eq:CG_bounds_Tunif} is natural in the sense that the vanishing viscosity limit system $u_t + M u_x = 0$ with a control at $x=0$ is controllable if and only if $TM/L \geq 1$. The situation is more subtle for $M<0$ since the boundary control at $x=0$ does not enter the limiting system as $x=0$ is the outflow boundary. In~\cite[Remark~6]{CoronGuerrero2005}, Coron and Guerrero left open whether the lower bounds in~\eqref{eq:CG_bounds_Tunif} and~\eqref{eq:CG_bounds_Tunif_negative} are optimal, which have subsequently been referred to as the Coron--Guerrero conjectures.

\begin{conj}\label{conj:M>0}
    Let $M>0$. Then
    \begin{equation}
        TM/L>1 \Rightarrow \limsup_{\eps \to 0}C_{\eps}(T,L,M)=0.
    \end{equation}
    In particular, we have
    \begin{equation}
        \frac{T_{\mathrm{unif}}(L,M) M}{L}=1.
    \end{equation}
\end{conj}

\begin{conj}\label{conj:M<0}
    Let $M<0$. Then
    \begin{equation}
        T|M|/L>2 \Rightarrow \limsup_{\eps \to 0}C_{\eps}(T,L,M)=0.
    \end{equation}
    In particular, we have
    \begin{equation}
        \frac{T_{\mathrm{unif}}(L,M) |M|}{L}=2.
    \end{equation}
\end{conj}

The upper bounds in~\eqref{eq:CG_bounds_Tunif} and~\eqref{eq:CG_bounds_Tunif_negative} were progressively improved. Glass~\cite{Glass2010} used complex-analytic tools to replace the constants $4.3$ and $57.2$ by $4.2$ and $6.1$, respectively. Lissy~\cite{Lissy2012} discovered a useful link between the uniform null-controllability problem and the cost of fast controls for the heat equation. Combining this connection with the estimate of Tenenbaum and Tucsnak~\cite{TenenbaumTucsnak2007}, he obtained the constants $2\sqrt{3}$ and $2\sqrt{3}+2$. Dard\'{e} and Ervedoza~\cite{DardeErvedoza2019} subsequently improved the estimate in~\cite{TenenbaumTucsnak2007}, leading to
\begin{equation}\label{eq:previous_upper_bounds}
    \frac{T_{\mathrm{unif}}(L,M) M}{L}
    \leq
    4\sqrt{K_0}
    \simeq
    3.3385
    \qquad (M>0)
\end{equation}
and
\begin{equation}\label{eq:previous_upper_bounds_negative}
    \frac{T_{\mathrm{unif}}(L,M) |M|}{L}
    \leq
    4\sqrt{K_0}+2
    \simeq
    5.3385
    \qquad (M<0),
\end{equation}
where $K_0\simeq0.6966$. Recently, Lissy~\cite{Lissy2026FastHeat} proved the sharpness of the bound in~\cite{DardeErvedoza2019} and identified
\begin{equation}
    K_0=\frac{\Gamma(1/4)^4}{8\pi^3}.
\end{equation}
Thus the upper bounds~\eqref{eq:previous_upper_bounds} and~\eqref{eq:previous_upper_bounds_negative} are the best ones obtainable from the transfer argument of~\cite{Lissy2012}. On the lower-bound side, Lissy~\cite{Lissy2015} showed that, for $M<0$,
\begin{equation}\label{eq:Lissy_negative}
    \frac{T_{\mathrm{unif}}(L,M) |M|}{L}
    \geq
    2\sqrt{2},
\end{equation}
thereby disproving Conjecture~\ref{conj:M<0}.

A related problem in a multidimensional and variable-coefficient setting was studied by Guerrero and Lebeau~\cite{GuerreroLebeau2007}. More recently, Laurent and L\'eautaud~\cite{LaurentLeautaud2021Gradient} showed, for gradient flows, that the uniform null-controllability time can be strictly larger than the minimal controllability time of the limiting transport equation, and related this phenomenon to semiclassical spectral properties. See also~\cite{LaurentLeautaud2023} for a further refinement in the one-dimensional setting.

In this paper, we determine the uniform null-controllability time exactly for both signs of $M$. Our first main result concerns the positive-speed case, and in particular, disproves Conjecture~\ref{conj:M>0}.

\begin{thm}[Positive speed]\label{thm:main_positive}
    Let $M>0$. Then
    \begin{equation}\label{thm:main_positive_blowup}
        TM/L<2 \Rightarrow \lim_{\eps \to 0}C_{\eps}(T,L,M)=+\infty
    \end{equation}
    and
    \begin{equation}\label{thm:main_positive_zero}
        TM/L>2 \Rightarrow \lim_{\eps \to 0}C_{\eps}(T,L,M)=0.
    \end{equation}
    In particular, we have
    \begin{equation}
        \frac{T_{\mathrm{unif}}(L,M) M}{L}=2.
    \end{equation}
\end{thm}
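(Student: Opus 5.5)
By the scaling identity~\eqref{eq:scaling_T_unif} we may take $L=1$ and $M=1$. By the Hilbert Uniqueness Method, $C_\eps(T)$ equals the best constant in the observability inequality for the adjoint problem
\begin{equation}
    -v_t - v_x = \eps v_{xx}, \qquad v(\cdot,0)=v(\cdot,1)=0,
\end{equation}
namely
\begin{equation}
    \|v(0,\cdot)\|_{L^2(0,1)} \le C_\eps(T)\, \|\eps v_x(\cdot,0)\|_{L^2(0,T)}.
\end{equation}
We conjugate by $e^{-x/(2\eps)}$, which turns the adjoint operator into the self-adjoint Dirichlet operator $\eps\partial_{xx} - 1/(4\eps)$. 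Its spectrum is $\lambda_k = \eps k^2\pi^2 + 1/(4\eps)$, with eigenfunctions $\sqrt2 \sin(k\pi x)$. After conjugation, the observed quantity is a Dirichlet series
\begin{equation}
    \sum_k a_k\, \eps k\pi\, e^{-\lambda_k t},
\end{equation}
and the norm of the initial datum becomes the weighted norm $\int |v_0|^2 e^{x/\eps}$ of the expansion. Both directions of the theorem therefore reduce to comparing this weighted $L^2$ norm with the $L^2(0,T)$ norm of the exponential sum $\sum_k c_k e^{-\lambda_k t}$, uniformly as $\eps\to0$.

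\emph{Upper bound ($T>2$).} Write $t = T_0 + s$ with $s\in(0,\infty)$. The exponentials $\{e^{-\lambda_k s}\}$ form a Riesz sequence in $L^2(0,\infty)$ spanning a model space $K_B$, with $B$ the Blaschke product having zeros at $\lambda_k$. The plan has four steps.
\begin{enumerate}
    \item Compute the Riesz constants from the asymptotics of $B$. Here $\lambda_k - 1/(4\eps)$ is a quadratic sequence at scale $\eps$, so in the rescaled variable $\mu = \eps\lambda$ the gaps are uniform for $k \lesssim 1/\eps$. Near the bottom of the spectrum, $B$ behaves like a $\sinh$/$\sin$-type ratio, and its modulus can be evaluated by Stirling-type sums.
    \item Use the time-translation structure: the observation on $(T_0,\infty)$ equals the observation on $(0,\infty)$ of the solution at time $T_0$.
    \item Reduce the problem to an infinite-time observability inequality for data at time $T_0$. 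Combined with dissipation, this gives a bound $\lesssim e^{\Phi(T)/\eps}$ with the exponent $\Phi$ explicit.
    \item Prove the infinite-time inequality by writing the boundary-to-interior map explicitly, as $v(T_0,x)$ expressed through the boundary trace on $(T_0,\infty)$ via the Laplace variable, and bounding its Hilbert--Schmidt norm. Summing over $k$ the ratio of the weight $e^{x/\eps}$ to $|B'(\lambda_k)|$ should yield $e^{c(T)/\eps}$ with $c(T)<0$ exactly when $T>2$. This gives $C_\eps \to 0$.
\end{enumerate}

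\emph{Lower bound ($T<2$).} We construct adjoint data $v_0 = \sum_k c_k \phi_k$, with $c_k = 1/B'(\lambda_k)$ times a smooth cutoff. These are coefficients of the biorthogonal (reproducing-kernel) type element of $K_B$ associated with a point outside the spectrum. Their boundary observation on $(0,T)$ is then a contour integral of $e^{-\lambda t}/B(\lambda)$, and we evaluate it by steepest descent. The aim is a concentrated datum, essentially a wavepacket near $x=1$ with weight $e^{1/(2\eps)}$, whose observation is only of size $e^{(T-2)/(4\eps)\cdot\text{const}}$ relative to it. The ratio then blows up for $T<2$. Heuristically, the factor $2$ arises because information has to travel to the boundary and the Blaschke phase contributes a second transit, mimicking the interior $e^{x/\eps}$ weight.

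The main obstacle is the precise saddle-point asymptotics of $\log|B|$ on the relevant contour at scale $1/\eps$, that is, identifying the exact exponent function whose sign change occurs at $T=2$. I would first compute $\lim \eps\log|B(\mu/\eps)|$ as an explicit integral, namely the logarithmic potential of the semicircle-like density of $\sqrt{\mu - 1/4}$, and verify that the critical time equals $2$. The same rate function then governs both the Hilbert--Schmidt estimate for the upper bound and the counterexample for the lower bound.
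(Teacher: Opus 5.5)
\textbf{Upper bound.} Your plan has two genuine gaps here.

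First, nothing in it controls the unobserved tail. Whatever time origin you choose, an infinite-time inequality bounds the state by the flux on a half-line, but the flux is observed only up to time $T$. You therefore need
$\int_T^\infty|\eps z_x(t,0)|^2\,dt\le C_T\int_0^T|\eps z_x(t,0)|^2\,dt$ uniformly in $\eps$. Time translation and dissipation do not give this. Riesz constants do not give it either:
\begin{itemize}
\item the full family $(e^{-\lambda_k t})_{k\ge1}$ is not a Riesz sequence in $L^2(0,\infty)$, because the $\lambda_k$ are not pseudo-hyperbolically separated and $d_k^\eps\to0$;
\item for finite truncations the Riesz bounds degenerate exponentially in $1/\eps$ (e.g.\ $d_1^\eps\lesssim\eps^{-3}e^{-1/(\sqrt2\eps)}$), so comparing the two norms through coefficients loses factors $e^{c/\eps}$.
\end{itemize}
The paper obtains the tail bound from the Bernstein inequality in the model space $K_{\Theta_{\eps,N}}$:
$\|F'\|_{H^2}\le\|\Theta_{\eps,N}'(i\cdot)\|_{L^\infty}\|F\|_{H^2}$, with $\|\Theta_{\eps,N}'(i\cdot)\|_{L^\infty}=2\sum_n 1/\lambda_n^\eps\le 2$. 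This gives $\int_0^\infty t^2|f|^2\le 4\int_0^\infty|f|^2$, hence $\int_0^\infty|f|^2\le \frac{T^2}{T^2-4}\int_0^T|f|^2$. This is one of the places where $T>2$ is used.

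Second, the Hilbert--Schmidt step as you describe it cannot reach the threshold $2$. You propose summing $|B'(\lambda_k)|^{-1}$ against the weight mode by mode. For $M>0$ the eigenfunctions are $e^{-x/(2\eps)}\sin(n\pi x)$, so the weight is $e^{-x/\eps}$; your $e^{x/\eps}$ is the negative-speed weight. Summing the modes of $H_\eps$ in absolute value gives $|H_\eps|\lesssim e^{1/(\sqrt2\eps)}$ uniformly in $y$. With the correct weight this proves decay only for $T>2\sqrt2$, and with yours only for $T>2+2\sqrt2$. To reach $2$ you need cancellation among the modes: $H_\eps(t,y)$ must be exponentially small for $y$ near $1$, i.e.\ near $x=0$, where $e^{-x/\eps}$ is largest. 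The paper proves $|H_\eps(t,y)|\lesssim\eps^{-1/2}e^{(q_a+ta^2-ay)/\eps}$ by Poisson summation and by shifting the Fourier integral to $\R-ia$. The choice $a=1/T$ then gives the exponent $-(T-2)^2/(2T)$.

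\textbf{Lower bound.} Your plan defers its central computation, the saddle-point asymptotics at scale $1/\eps$. Its guiding picture is also the negative-speed one: a packet near $x=1$ gaining $e^{1/(2\eps)}$ is penalized, not favoured, by the true weight $e^{-x/\eps}$. So the proposal contains no valid lower bound for $\|z^\eps(T,\cdot)\|_{L^2(0,1)}$.

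The paper needs no exponential-scale asymptotics:
\begin{itemize}
\item At scale $O(1)$, $\Theta_\eps(iy)\to e^{-2iy}$, because $\sum_{n\le N_\eps}1/\lambda_n^\eps\to1$. Hence the projections $\Pi_\eps$ onto $\mathcal E_\eps$ converge strongly to multiplication by $\bm 1_{(0,2)}$.
\item Taking $q_\eps=\Pi_\eps g$ with $0\le g\not\equiv0$ supported in $[\alpha,\beta]\subset(T,2)$ makes the observed flux $o(1)$ in $L^2(0,T)$.
\item For the interior, the resolvent $(\sigma+P_\eps)^{-1}$ and the maximum principle for $(\sigma+\partial_x-\eps\partial_x^2)v=0$, $v(0)=1$, $v(1)=0$ give $\|z^\eps(T,\cdot)\|_{L^2(0,1)}\ge\bigl|\int_0^\infty e^{-\sigma t}q_\eps(T+t)\,dt\bigr|$. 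This tends to $\int_0^\infty e^{-\sigma t}g(T+t)\,dt>0$.
\end{itemize}
Your reproducing-kernel elements at a point $\mu/\eps$ might eventually yield an explicit exponential blow-up rate, which the paper does not provide. That would still require you to carry out both the rate-function analysis and an interior lower bound of this kind.
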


Our results leave open whether the control cost remains bounded as $\eps \to 0$ at the critical time $T = 2 L / M$.

The theorem shows that the minimal null-controllability time $L/M$ for the limiting transport equation is not the correct uniform null-controllability time: the vanishing viscosity problem retains an additional obstruction up to twice that time. It also contrasts with the numerical study of Amirat and M\"unch~\cite{AmiratMunch2019}, which reported evidence consistent with the threshold $T_{\mathrm{unif}}(L,M)M/L=1$ while emphasizing the difficulty of accessing very small $\eps>0$ numerically. In fact, as will become clear from the proof of Theorem~\ref{thm:main_positive}, intricate cancellation of increasingly high modes plays a crucial role in the determination of the threshold $T_{\mathrm{unif}}(L,M) M/L=2$, which makes this discrepancy with finite-resolution computations plausible.

Our second main result determines the uniform null-controllability time in the negative-speed case as well.

\begin{thm}[Negative speed]\label{thm:main_negative}
    Let $M<0$. Then
    \begin{equation}\label{thm:main_negative_blowup}
        T|M|/L<2+2\sqrt{2} \Rightarrow \lim_{\eps \to 0}C_{\eps}(T,L,M)=+\infty
    \end{equation}
    and
    \begin{equation}\label{eq:main_negative}
        T|M|/L \geq 2+2\sqrt{2} \Rightarrow \lim_{\eps \to 0}C_{\eps}(T,L,M)=0.
    \end{equation}
    In particular, we have
    \begin{equation}
        \frac{T_{\mathrm{unif}}(L,M) |M|}{L}=2+2\sqrt{2}.
    \end{equation}
\end{thm}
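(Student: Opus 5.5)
Throughout, I normalize $L=1$, $M=-1$ via \eqref{eq:scaling_T_unif}. The plan is to relate this case to the positive-speed problem by a gauge transformation and then redo the spectral analysis with the modified exponential weight.

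\textbf{Reduction and observability.} Write $u=e^{Mx/(2\eps)}e^{-M^2t/(4\eps)}v$. Then $v$ solves the heat equation $v_t=\eps v_{xx}$ with a Dirichlet control at $x=0$. The conjugation weight $e^{-x/(2\eps)}$ (for $M=-1$) is largest at the controlled endpoint, whereas for $M>0$ it is smallest there. By duality, the cost $C_\eps(T)$ equals the best constant in the observability inequality
\[
\|\varphi(0)\|_{L^2(0,1)}\le C\,\sqrt{\eps}\,\|\varphi_x(\cdot,0)\|_{L^2(0,T)}
\]
for the adjoint equation $-\varphi_t+\varphi_x=\eps\varphi_{xx}$ with homogeneous Dirichlet data. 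Expanding in adjoint eigenfunctions $e^{-x/(2\eps)}\sin(k\pi x)$, with eigenvalues $\lambda_k=\eps k^2\pi^2+1/(4\eps)$, the observation becomes an exponential sum $\sum_k a_k e^{-\lambda_k t}$. The relative weights of the coefficients, once normalized in $L^2(0,1)$, differ from the $M>0$ case by a factor $e^{\pm 1/(2\eps)}$ coming from the boundary values of the weight. This factor is what shifts the threshold.

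\textbf{Lower bound \eqref{thm:main_negative_blowup}.} For $T<2+2\sqrt2$, I would build adjoint data concentrated on modes with $k\sim \xi/\eps$. Following the positive-speed construction, I would take a combination supported on a dyadic block of high modes whose boundary trace is nearly annihilated. This uses the Blaschke product $B$ of the family $\{\lambda_k\}$, and the model-space function $B(z)/(z-\lambda)$ gives near-biorthogonal elements. A saddle-point (WKB) computation then gives the exponential rate of $\|\varphi(0)\|^2/(\eps\|\varphi_x(\cdot,0)\|^2)$ as $\exp\bigl(\Phi(\xi,T)/\eps\bigr)$. Here $\Phi$ combines three contributions: the decay $-2T(\xi^2\pi^2+1/4)$, the weight gain of order $1$, and the Blaschke-product asymptotics $\log|B|\approx \frac1\eps\int\log|\cdot|$, which involves a Cauchy-type integral of the density $d k/d\lambda$. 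Maximizing over $\xi$ should give $\sup_\xi\Phi(\xi,T)>0$ exactly when $T<2+2\sqrt2$. The value $2+2\sqrt2$ should arise as $2+(\text{Lissy's }2\sqrt2)$: the $2$ is the transport/weight part and the $2\sqrt2$ the optimized Blaschke part, in parallel with $2=1+1$ for $M>0$.

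\textbf{Upper bound \eqref{eq:main_negative}.} Using the model-space structure, I would split the observation into a finite-time piece and an infinite-time tail. The finite-time part is controlled by $B$-weighted norms, with losses $e^{c/\eps}$ whose exponent is made explicit by the same asymptotics. This reduces the problem to proving an infinite-time observability inequality on $(T_0,\infty)$ with constant $e^{-\delta/\eps}$ once $T\ge 2+2\sqrt2$. To prove that inequality, I would represent the boundary-to-interior map $\varphi_x(\cdot,0)\mapsto\varphi(0)$ explicitly as an integral operator, via the biorthogonal family built from $B$. I would then bound its Hilbert--Schmidt norm by a double sum or integral whose exponent is $\Phi$. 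The Hilbert--Schmidt norm tends to $0$ provided $\sup\Phi<0$, with equality allowed at the threshold if the boundary term decays polynomially. This is why the inequality in \eqref{eq:main_negative} is non-strict.

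\textbf{Main obstacle.} The hardest step is the sharp asymptotics of $\log|B|$ near the real axis, uniformly in the mode range. These asymptotics must be matched in both the lower-bound construction and the Hilbert--Schmidt estimate, so that one phase function $\Phi$ governs both directions. I would first compute $\Phi$ explicitly by approximating the sum defining $\log|B|$ with Riemann integrals. I would then check that its zero set in $T$ is exactly $2+2\sqrt2$, and only afterwards control the error terms.
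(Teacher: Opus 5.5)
Your heuristic bookkeeping is right: a factor $e^{1/\eps}$ from the spatial weight at $x=1$, plus $e^{\sqrt2/\eps}$ from the product over the spectrum, against $e^{-T/(2\eps)}$ from the decay. However, both directions of your plan have a genuine gap.

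\textbf{Lower bound.} You never say how to bound $\|z^{\eps}(T,\cdot)\|_{L^2}$ from below.

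For $M=-1$ the adjoint eigenfunctions are $e^{+x/(2\eps)}\sin(k\pi x)$, not $e^{-x/(2\eps)}\sin(k\pi x)$. This weight is exactly where the extra $2$ comes from. The interior state is the oscillating sum $e^{x/(2\eps)}\sum_n c_n e^{-\lambda_n^{\eps}T}\sin(n\pi x)$. A saddle-point estimate of the size of its contributions says nothing about cancellation. The safe alternative, pairing with a dual eigenfunction, loses the weight $e^{1/\eps}$ and only recovers the threshold $2\sqrt2$.

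The missing idea is sign coherence. Take the exact biorthogonal element $\bm c=G^{-1}\bm e_1$ over all $N_\eps=\lfloor\eps^{-2}\rfloor$ modes. It comes from Cauchy matrix inversion and is, up to a scalar, your $B(s)/(s-\lambda_1^{\eps})$ with the full Blaschke product. Then:
\begin{itemize}
\item $\operatorname{sgn}c_n=(-1)^{n+1}$, so every term $c_n\sin(n\pi x)$ is positive on $(1-\eps^2,1)$, and $|z^{\eps}(T,x)|$ dominates its first term there.
\item The infinite-time flux norm is exactly $c_1=2\lambda_1^{\eps}/(b_1^{\eps}d^{\eps}_{1,N_\eps})^2$.
\item The exact $\sin/\sinh$ product formulas give $d^{\eps}_{1,N_\eps}\lesssim\eps^{-3}e^{-1/(\sqrt2\eps)}$.
\end{itemize}

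Your focus on a block of high modes $k\sim\xi/\eps$ is also misdirected. Carrying out your own computation gives $\Phi(\xi,T)=1+2\sqrt{\pi^2\xi^2+1/2}-T(\tfrac12+2\pi^2\xi^2)$. Its zero in $T$ is strictly decreasing in $\xi$. So any block with $\xi$ bounded away from $0$ yields a time strictly below $2+2\sqrt2$, and truncating to a block further worsens $d_n$. The optimum is the lowest mode, where there is no WKB regime.

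\textbf{Upper bound.} The reduction from $(0,T)$ to $(0,\infty)$ must cost nothing exponential.

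The construction above shows the infinite-time observability constant is itself of size $e^{(2+2\sqrt2-T)/(2\eps)}$ up to powers of $\eps$. A loss $e^{c/\eps}$ in the reduction, as you anticipate, would move your threshold to $2+2\sqrt2+2c$ and rule out the critical case.

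The paper gets an $\eps$-uniform reduction for every $T>2$ from the Bernstein inequality in model spaces:
\begin{itemize}
\item $\|F'\|_{H^2}\le\|\Theta_{\eps,N}'(i\cdot)\|_{L^\infty}\|F\|_{H^2}$, with $\|\Theta_{\eps,N}'(i\cdot)\|_{L^\infty}=2\sum_n 1/\lambda_n^{\eps}\le 2$;
\item this gives $\int_0^\infty t^2|f|^2\le 4\int_0^\infty|f|^2$;
\item hence $\int_0^\infty|f|^2\le\frac{T^2}{T^2-4}\int_0^T|f|^2$.
\end{itemize}

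After that your Hilbert--Schmidt idea works, and no fine asymptotics of $\log|B|$ are needed for $M<0$. The Hilbert--Schmidt norm squared is $4\int_T^\infty e^{-t/(2\eps)}\int_0^1 e^{x/\eps}|H_\eps(t,1-x)|^2\,dx\,dt$. The crude bounds $\frac{\pi n}{\omega_n^{\eps}}\sinh\omega_n^{\eps}\le\frac12 e^{\omega_n^{\eps}}$ and $\omega_n^{\eps}\le\frac{1}{\sqrt2\eps}+\frac{\eps\pi^2 n^2}{\sqrt2}$ already give $O(\eps\, e^{-(T-2-2\sqrt2)/(2\eps)})$. This means cost $O(\sqrt\eps)$ at $T=2+2\sqrt2$.

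Also, duality puts the constant in front of $\|\eps z_x(\cdot,0)\|_{L^2(0,T)}$, not $\sqrt\eps\,\|z_x(\cdot,0)\|$. At the critical time such polynomial factors decide whether the cost tends to $0$.
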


Thus the negative-speed threshold is $2+2\sqrt{2} \simeq 4.8284$. Moreover, the control cost tends to zero even at the critical time. This improves both the previously known lower bound in~\eqref{eq:Lissy_negative} and the upper bound in~\eqref{eq:previous_upper_bounds_negative}, and closes the gap between them.

The vanishing of the control cost stated in Theorems~\ref{thm:main_positive} and~\ref{thm:main_negative} follows directly from Theorems~\ref{prop:positive_cost_bound} and~\ref{prop:negative_cost_bound}, respectively. These provide explicit exponential decay estimates strictly beyond the critical times. At the critical time in the negative-speed case, Theorem~\ref{prop:negative_cost_bound} gives an $O(\sqrt{\eps})$ bound. The blow-up conclusions follow from Theorems~\ref{thm:counterexample_observation} and~\ref{thm:counterexample_observation_negative}, respectively. The former does not provide an explicit blow-up rate for positive speed, whereas the latter gives an explicit exponentially growing lower bound for negative speed.

We now briefly describe the main ideas of the proof. First, note that by the scaling~\eqref{eq:scaling_T_unif}, we may assume $L=|M|=1$. The first implication~\eqref{thm:main_positive_blowup} in Theorem~\ref{thm:main_positive} is proved through the dual observability problem. As pointed out in~\cite{Glass2010}, the boundary observation can be expressed in terms of the exponential family $(e^{-\lambda_n^{\eps}t})_{n=1}^{\infty}$, where $(\lambda_n^{\eps})_{n=1}^{\infty}$ are the Dirichlet eigenvalues of the adjoint transport--diffusion operator $-(\pm \partial_x + \eps \partial_{x}^{2})$. The key structure used in the analysis is that the associated model spaces in the Hardy space $H^2$ have a nontrivial limit as $\eps \to 0$. More precisely, the relevant Blaschke products converge to the inner function $e^{-2s}$. Under the Laplace transform, the model space associated with $e^{-2s}$ corresponds to functions supported on the time interval $[0,2]$. This allows us to construct adjoint solutions whose boundary flux is asymptotically concentrated after any observation time $T<2$. A resolvent estimate then shows that their interior states at time $T$ remain bounded away from zero, yielding the failure of uniform observability. For negative speed, we use an explicit inverse Gram matrix to choose coefficients whose contribution have the same sign near $x=1$. This gives an interior-state lower bound that, compared with the boundary observation, yields the threshold $2 + 2 \sqrt{2}$.

The model-space viewpoint is also useful for the upper bounds. For both signs of $M$, it allows us to reduce finite-time observability to infinite-time observability whenever $T>2$. The infinite-time setting simplifies the formulas for the boundary-to-interior map, which expresses the interior state at time $T$ in terms of the full boundary observation.\footnote{This viewpoint was inspired by the work of Jacob and Partington~\cite{Jacob2006}, although we do not directly use their results.}~In particular, we derive an integral representation of the Hilbert--Schmidt norm of the boundary-to-interior map involving an auxiliary function $H_{\eps}$ defined by an infinite sine series~\eqref{eq:infinite_boundary_interior_kernel}. For negative speed, a direct estimate that neglects cancellation already yields the sharp upper bound $2+2\sqrt{2}$. For positive speed, localization estimates obtained by the Poisson summation formula and contour deformation, combined with the spatial weight in the adjoint eigenfunctions, yield the upper bound $2$.

Hardy spaces and model spaces therefore enter both the lower bound and the upper bound, but in different ways: for the positive-speed lower bound, they identify the limiting time-localization obstruction, whereas for the upper bounds they provide a convenient reduction to infinite-time observability. Hardy-space methods have appeared previously in the analysis of fast controls for the heat equation in~\cite{Lissy2026FastHeat}.

The remainder of the paper is organized as follows. Section~\ref{sec:framework} introduces the scaling, observability, and spectral framework. Section~\ref{sec:Hardy} develops the model-space tools. Sections~\ref{sec:proof} and~\ref{sec:negative_lower} establish the lower bounds for positive and negative speed, respectively. Section~\ref{sec:reduction_kernel} explains the reduction to infinite-time observability, introduces the boundary-to-interior map, derives its Hilbert--Schmidt norm formula, and proves the
localization estimate used for positive speed. Finally, Section~\ref{sec:proof_upper} proves the upper bounds and completes the proofs of the main theorems.

\section{Scaling, observability, and spectral structure}\label{sec:framework}\label{sec:strategy}

\subsection{Scaling and the observability formulation}\label{subsec:scaling_observability}
Throughout the rest of the paper, we assume $M \neq 0$. Note that by scaling time and space variables, we obtain the following scaling property:
\begin{equation}\label{eq:scaling_cost}
    C_{\eps}(T,L,M)
    =
    |M|^{-1/2}
    C_{\eps/(|M|L)}\left(\frac{T|M|}{L},1,\operatorname{sgn}M\right).
\end{equation}
Hence we may and shall assume $L=1$ and $M=\pm1$ in what follows. We write $\eta=\operatorname{sgn}M\in\{-1,1\}$ and
\begin{equation}\label{eq:normalized_cost}
    C_{\eps}^{\eta}(T)
    \coloneqq
    C_{\eps}(T,1,\eta).
\end{equation}

We next recall the standard duality between null-controllability and observability; see~\cite[Remark~2.98]{Coron2007}.

\begin{prop}\label{prop:duality}
    Let $z$ be the solution to
    \begin{equation}\label{eq:transport-diffusion-adjoint}
        \begin{dcases}
            z_t = \eta z_x + \eps z_{xx} & \text{in $(0,T) \times (0,1)$}, \\
            z(\cdot,0)=z(\cdot,1)=0 & \text{in $(0,T)$}, \\
            z(0,\cdot)=z_0 & \text{in $(0,1)$}
        \end{dcases}
    \end{equation}
    with $z_0 \in H_{0}^{1}(0,1) \cap H^2(0,1)$. Then we have
    \begin{equation}\label{eq:observation}
        \int_{0}^{1}|z(T,x)|^2 \, dx
        \leq
        C_{\eps}^{\eta}(T)^2
        \int_{0}^{T}|\eps z_x(t,0)|^2 \, dt.
    \end{equation}
    Conversely, $C_{\eps}^{\eta}(T)$ is the smallest nonnegative constant for which this inequality holds for all $z_0 \in H_{0}^{1}(0,1) \cap H^2(0,1)$.
\end{prop}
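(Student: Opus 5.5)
The proof is the standard Hilbert Uniqueness Method duality, carried out concretely for~\eqref{eq:transport-diffusion} with $L=1$, $M=\eta$. The plan has three steps: derive the duality identity linking a controlled solution to an adjoint solution, deduce the observability inequality~\eqref{eq:observation} with constant $C_{\eps}^{\eta}(T)$, and then prove the converse by constructing controls from observability.

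\textbf{Step 1: the duality identity.} Let $u$ solve~\eqref{eq:transport-diffusion} with control $f$ and initial data $u_0$. Consider the backward adjoint solution $\varphi(t,x)=z(T-t,x)$, which satisfies $-\varphi_t-\eta\varphi_x-\eps\varphi_{xx}=0$ with homogeneous Dirichlet data. Taking first smooth $f$ and data and then arguing by density, we compute
\[
\frac{d}{dt}\int_0^1 u\varphi\,dx .
\]
Integrating by parts, the terms involving $u$ at $x=0,1$ from the transport part vanish because $\varphi$ vanishes on the boundary, and the diffusion term produces only the boundary contribution $\eps u\varphi_x$ at $x=0$. This gives
\[
\int_0^1 u(T)z_0\,dx-\int_0^1 u_0\, z(T)\,dx=\int_0^T f(t)\,\eps z_x(T-t,0)\,dt ,
\]
possibly up to a sign.

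\textbf{Step 2: observability from controllability.} Let $f=f_{u_0}$ be the minimal-norm null control, so that $u(T)=0$ and $\|f\|_{L^2(0,T)}\le C_{\eps}^{\eta}(T)\|u_0\|_{L^2(0,1)}$. The identity then reads $\left|\int_0^1 u_0 z(T)\,dx\right|\le C_{\eps}^{\eta}(T)\|u_0\|\,\|\eps z_x(\cdot,0)\|_{L^2(0,T)}$. Taking the supremum over $\|u_0\|\le1$ yields~\eqref{eq:observation}.

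\textbf{Step 3: the converse.} Suppose~\eqref{eq:observation} holds with some constant $C$. Introduce the operator $\mathcal{O}\colon z_0\mapsto \eps z_x(T-\cdot,0)$, which extends to the closure $\mathcal{H}$ of $H_0^1\cap H^2$ under the seminorm $\|\mathcal{O}z_0\|_{L^2(0,T)}$. On the range of $\mathcal{O}$, define the functional $\mathcal{O}z_0\mapsto -\int_0^1 u_0 z(T)\,dx$. It is well defined and bounded with norm at most $C\|u_0\|$. By Riesz representation, extended by zero on the orthogonal complement, we obtain $f$ in the closure of the range with $\|f\|\le C\|u_0\|$. The duality identity then gives $\int_0^1 u(T)z_0\,dx=0$ for all $z_0$ in a dense set, so $u(T)=0$. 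Hence $C_{\eps}^{\eta}(T)\le C$. This shows that $C_{\eps}^{\eta}(T)$ is the smallest admissible constant; the infimum is attained because of Step 2. Equivalently, one can minimize the HUM functional $J(z_0)=\frac12\|\mathcal{O}z_0\|^2+\int u_0 z(T)$.

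\textbf{Main obstacle.} The technical point is the meaning of the boundary-controlled solution for $f\in L^2$: it must be defined by transposition, i.e.\ by the identity of Step 1. One must also justify that $\eps z_x(\cdot,0)$ is well defined in $L^2$ for $H^2\cap H_0^1$ data. Both follow from standard parabolic regularity, since $z\in C([0,T];H^2)$ by the analyticity of the semigroup, and this is all covered in~\cite[Remark~2.98]{Coron2007}.
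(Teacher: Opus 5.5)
Your proposal is correct and takes the same approach as the paper. The paper gives no proof of its own and simply invokes the standard HUM duality from Coron's Remark 2.98; your three steps reproduce exactly that argument. Your duality identity $\int_0^1 u(T)z_0\,dx-\int_0^1 u_0\,z(T)\,dx=\int_0^T f(t)\,\eps z_x(T-t,0)\,dt$ holds with the sign as written. The converse via Riesz representation on $\overline{\operatorname{Ran}\mathcal{O}}$ correctly yields $C_{\eps}^{\eta}(T)\le C$. One small point: $z\in C([0,T];H^2(0,1))$ for $z_0\in H_0^1\cap H^2$ already follows from $z_0$ lying in the domain of the generator of a $C_0$-semigroup, so analyticity is not needed there.
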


The adjoint equation is written here in forward time. We shall also consider its solutions for all $t>0$, in order to analyze infinite-time boundary observation.

\subsection{Spectral structure}\label{subsec:spectral_data}
As pointed out by Glass in~\cite{Glass2010}, the Dirichlet spectral data for the operator
\begin{equation}\label{eq:def_operator_P_signed}
    P_{\eps}^{\eta}
    \coloneqq
    -\left(\eta\partial_x+\eps\partial_x^2\right)
\end{equation}
with the domain $D(P_{\eps}^{\eta}) \coloneqq H_0^1(0,1)\cap H^2(0,1)$ can be computed explicitly. In fact, conjugation by $e^{-\eta x/(2\eps)}$ yields
\begin{equation}\label{eq:conjugation}
    e^{\eta x/(2\eps)} \circ P_{\eps}^{\eta} \circ e^{-\eta x/(2\eps)}
    =
    -\eps\partial_x^2+\frac{1}{4\eps}.
\end{equation}
Thus the eigenvalues, which do not depend on $\eta$, are
\begin{equation}\label{eq:lambda_n}
    \lambda_{n}^{\eps}
    \coloneqq
    \frac{1}{4\eps} + \eps n^2 \pi^2
    \qquad
    (n \geq 1)
\end{equation}
and corresponding eigenfunctions are
\begin{equation}\label{eq:phi_signed}
    \phi_{n}^{\eps,\eta}(x)
    \coloneqq
    \exp\left(-\frac{\eta x}{2\eps}\right)
    \sin(n\pi x)
    \qquad(n\geq1).
\end{equation}
Hence, for any natural number $M_{\eps}$ and any sequence of real numbers $(c_n^{\eps})_{n=1}^{M_{\eps}}$,
\begin{equation}\label{eq:finite_sum_solution}
    z^{\eps}
    =
    \sum_{n=1}^{M_{\eps}}
    c_n^{\eps} e^{-\lambda_{n}^{\eps} t} \phi_{n}^{\eps,\eta}
\end{equation}
is a solution to~\eqref{eq:transport-diffusion-adjoint} with the initial data $z_0^{\eps} = \sum_{n=1}^{M_{\eps}}c_n^{\eps} \phi_n^{\eps,\eta}$. Writing
\begin{equation}\label{eq:boundary_coefficients}
    b_n^{\eps}
    \coloneqq
    \eps\partial_x\phi_n^{\eps,\eta}(0)
    =
    \pi\eps n \qquad (n \geq 1),
\end{equation}
the boundary flux has the exponential-sum representation
\begin{equation}\label{eq:boundary_flux_spectral}
    \eps \partial_x z^{\eps}(t,0)
    =\sum_{n=1}^{M_{\eps}}b_n^{\eps} c_n^{\eps} e^{-\lambda_n^{\eps}t}.
\end{equation}
Note that the two signs $\eta = \pm 1$ give the same exponential family $e^{-\lambda_{n}^{\eps} t}$ and the same boundary coefficients $b_{n}^{\eps}$; the difference appears in the spatial weight $e^{-\eta x/(2\eps)}$ in the expression of the eigenfunctions.

\section{Model spaces and the vanishing viscosity limit}\label{sec:Hardy}

\subsection{Laplace transforms and the Hardy space $H^2$}
All functions appearing in this section are complex valued. We denote the right half-plane by
\begin{equation}
    \mathbb{C}_+
    \coloneqq
    \{ z \in \mathbb{C} : \Re z>0 \}.
\end{equation}
For $f \in L^2(0,\infty)$, we denote its Laplace transform by
\begin{equation}
    \mathcal{L}[f](s)
    \coloneqq
    \int_{0}^{\infty}e^{-st} f(t) \, dt \qquad (s \in \C_+).
\end{equation}
We recall two examples of Laplace transforms particularly important in the present study.

\begin{prop}\label{prop:Laplace_transform_exp}
    For $\lambda>0$, we have
    \begin{equation}
        \L[e^{-\lambda t}](s)
        =
        \frac{1}{\lambda+s} \qquad (s \in \C_+).
    \end{equation}
\end{prop}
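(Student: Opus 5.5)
The identity follows from a direct computation of the improper integral defining $\L$. Fix $\lambda>0$ and $s\in\C_+$. The function $t\mapsto e^{-\lambda t}$ lies in $L^2(0,\infty)$, so $\L[e^{-\lambda t}](s)$ is defined. I will write
\begin{equation}
    \L[e^{-\lambda t}](s)
    =
    \int_0^\infty e^{-(\lambda+s)t}\,dt
    =
    \lim_{R\to\infty}\int_0^R e^{-(\lambda+s)t}\,dt .
\end{equation}
The integral converges absolutely because $|e^{-(\lambda+s)t}| = e^{-(\lambda+\Re s)t}$ and $\lambda+\Re s>0$.

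Next I will apply the fundamental theorem of calculus to the complex exponential. Since $\lambda+s\neq 0$ (its real part is positive), the function $-e^{-(\lambda+s)t}/(\lambda+s)$ is an antiderivative of $e^{-(\lambda+s)t}$. This gives
\begin{equation}
    \int_0^R e^{-(\lambda+s)t}\,dt
    =
    \frac{1-e^{-(\lambda+s)R}}{\lambda+s}.
\end{equation}
The modulus of the boundary term is $e^{-(\lambda+\Re s)R}/|\lambda+s|$, which tends to $0$ as $R\to\infty$. Passing to the limit yields $1/(\lambda+s)$, as claimed.

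No step is a genuine obstacle. The only points to check are that $\Re(\lambda+s)>0$, which gives both the absolute convergence of the integral and the vanishing of the boundary term, and that $\lambda+s\neq0$, so the antiderivative is well defined. Both follow at once from $\lambda>0$ and $s\in\C_+$.
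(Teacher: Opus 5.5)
Your proposal is correct: the direct evaluation of $\int_0^\infty e^{-(\lambda+s)t}\,dt$, using $\Re(\lambda+s)>0$ for absolute convergence and for the vanishing of the boundary term, is exactly the standard verification. The paper gives no proof of this proposition and simply recalls it as a known Laplace transform, so your computation supplies the implicit argument.
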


\begin{prop}\label{prop:Laplace_transform_delay}
    For $f \in L^2(0,\infty)$ and $\tau>0$, we have
    \begin{equation}
        \L[ \bm{1}_{(\tau,\infty)} f(t-\tau) ]
        =
        e^{-\tau s} (\L f)(s) \qquad (s \in \C_+).
    \end{equation}
\end{prop}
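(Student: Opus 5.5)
The identity follows from the definition of the Laplace transform by a change of variables, so I will verify it directly on $\mathbb{C}_+$.

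Fix $f \in L^2(0,\infty)$, $\tau>0$, and $s \in \C_+$. Set $g(t) \coloneqq \bm{1}_{(\tau,\infty)}(t) f(t-\tau)$. Then $g \in L^2(0,\infty)$ with $\|g\|_{L^2} = \|f\|_{L^2}$, so $\L[g](s)$ is defined.

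\emph{Absolute convergence.} For $\Re s>0$, the function $t \mapsto e^{-st}$ lies in $L^2(0,\infty)$, since
\begin{equation}
    \int_0^\infty |e^{-st}|^2 \, dt = \frac{1}{2\Re s} < \infty .
\end{equation}
By Cauchy--Schwarz, $e^{-st}g(t)$ is therefore absolutely integrable, and the same holds for $e^{-st}f(t)$. This justifies the change of variables below.

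\emph{Change of variables.} The indicator restricts the integral to $t>\tau$, so
\begin{equation}
    \L[g](s) = \int_\tau^\infty e^{-st} f(t-\tau)\, dt .
\end{equation}
Substituting $t = r + \tau$ gives
\begin{equation}
    \L[g](s) = \int_0^\infty e^{-s(r+\tau)} f(r)\, dr = e^{-\tau s}\, \L[f](s).
\end{equation}
This is the claimed identity.

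There is no real obstacle here. The only point requiring care is the absolute integrability, which the $L^2$ bound above supplies.
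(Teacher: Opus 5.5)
Your proof is correct: Cauchy--Schwarz against $e^{-st}\in L^2(0,\infty)$ gives absolute integrability, and the substitution $t=r+\tau$ then yields the identity. The paper states this proposition as a standard fact without proof, and your direct verification is exactly the elementary argument it implicitly relies on.
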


Let us recall some required facts about the Hardy space $H^2=H^2(\mathbb{C}_+)$ over the right half-plane $\C_+$, following Duren~\cite{Duren1970} and Garcia and Ross~\cite{GarciaRoss2015}. Our notation $H^2$ corresponds to $\mathfrak{h}^2$ in the former and to $\mathcal{H}^2$ in the latter. Both references consider the upper half-plane instead of the right half-plane and the results should be transferred accordingly. We start with the definition of the Hardy space $H^2$.

\begin{dfn}
    Denote by $\mathcal{H}=\mathcal{H}(\C_+)$ the space of holomorphic functions on $\C_+$. For $F \in \mathcal{H}$, define its $H^2$-norm by
    \begin{equation}
        \| F \|_{H^2}
        \coloneqq
        \sup_{x>0}
        \left(
        \frac{1}{2\pi}
            \int_{-\infty}^{\infty}
            |F(x+iy)|^2 \, dy
        \right)^{1/2}.
    \end{equation}
    The Hardy space $H^2=H^2(\mathbb{C}_+)$ is the space
    \begin{equation}
        H^2
        \coloneqq
        \{ F \in \mathcal{H} : \| F \|_{H^2} < +\infty \}.
    \end{equation}
\end{dfn}

The next Proposition gives a fundamental property of the boundary behavior of functions in $H^2$.

\begin{prop}[{\cite[Section~10]{GarciaRoss2015}}]\label{prop:boundary_trace}
    For any $F \in H^2$, the limit
    \begin{equation}
        F_0(y)
        \coloneqq
        \lim_{x \to 0}
        F(x+iy)
    \end{equation}
    exists for almost every $y \in \R$. Moreover, we have $F_0 \in L^2(\R)$ and
    \begin{equation}
        \lim_{x \to 0}F(x+i\cdot)
        =
        F_0
        \qquad
        \text{in $L^2(\R)$}.
    \end{equation}
    In addition, the normalized restriction
    \begin{equation}
        \iota \colon H^2 \to L^2(\R)
    \end{equation}
    defined by $\iota(F)=(1/\sqrt{2\pi})F_0$ is an isometry and induces a Hilbert space structure on $H^2$ by the inner product
    \begin{equation}
        \langle F,G \rangle_{H^2}
        =
        \langle \iota(F),\iota(G) \rangle_{L^2(\R)}.
    \end{equation}
\end{prop}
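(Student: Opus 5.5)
This statement is a classical fact about $H^2(\mathbb{C}_+)$, the Hardy space of the right half-plane. I would deduce it from the Paley--Wiener theorem rather than from scratch. The key step is to show that every $F\in H^2$ is the Laplace transform of a unique $f\in L^2(0,\infty)$, with
\begin{equation}
    \|F\|_{H^2}^2=\int_0^\infty |f(t)|^2\,dt .
\end{equation}
Granting this, everything else follows from Plancherel and the continuity of the Fourier transform. Indeed, for $x>0$,
\begin{equation}
    F(x+iy)=\int_0^\infty e^{-xt}f(t)e^{-iyt}\,dt,
\end{equation}
which is the Fourier transform of $e^{-xt}f(t)\mathbf{1}_{t>0}$. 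As $x\to0$, dominated convergence gives $e^{-xt}f\to f$ in $L^2(0,\infty)$. Plancherel then yields $F(x+i\cdot)\to F_0:=\widehat{f\mathbf{1}_{(0,\infty)}}$ in $L^2(\mathbb{R})$, with $\frac1{2\pi}\|F_0\|_{L^2}^2=\|f\|^2_{L^2}=\|F\|_{H^2}^2$. This gives both the $L^2$ limit and the isometry property of $\iota$.

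To obtain the Paley--Wiener representation, I would argue as follows. By Plancherel, the map $x\mapsto \frac1{2\pi}\int|F(x+iy)|^2\,dy$ is bounded. Let $g_x$ denote the inverse Fourier transform of $F(x+i\cdot)$, so that $g_x\in L^2(\mathbb{R})$. Using Cauchy's theorem on rectangles, together with the decay along horizontal segments that comes from the uniform $L^2$ bound after averaging, I would show that $e^{xt}g_x(t)$ is independent of $x$; call it $f(t)$. Then $\int e^{-2xt}|f(t)|^2\,dt$ stays bounded as $x\to\infty$, which forces $f=0$ on $t<0$. Letting $x\to0^+$, monotone convergence gives $f\in L^2(0,\infty)$ with the norm identity above. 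The same computation shows that $x\mapsto\|F(x+i\cdot)\|_{L^2}$ is nonincreasing, so the supremum in the definition of $\|\cdot\|_{H^2}$ is the limit as $x\to0$.

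The remaining issue is almost-everywhere convergence of $F(x+iy)$ to $F_0(y)$. I would express $F$ as the Poisson integral of its boundary values,
\begin{equation}
    F(x+iy)=\frac1\pi\int_{\mathbb{R}}\frac{x}{x^2+(y-s)^2}\,F_0(s)\,ds .
\end{equation}
This holds because both sides are the Laplace transform of $f$, as one checks via the Fourier transform of the Poisson kernel, $e^{-x|t|}$. The Poisson maximal function is dominated by the Hardy--Littlewood maximal function, so Fatou's theorem gives nontangential, and in particular radial, convergence at every Lebesgue point of $F_0$. I expect the main obstacle to be justifying the contour shift in the Paley--Wiener step, since one only has $L^2$ control on vertical lines. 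I would handle this by integrating the Cauchy identity over horizontal heights and using Fubini to select segments on which $F$ is small. Alternatively, one can transfer the upper half-plane statements in Garcia--Ross and Duren by the rotation $s\mapsto is$, which is exactly what the citation allows.
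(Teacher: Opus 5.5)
Your argument is correct, but it is not the paper's route. The paper gives no proof of this proposition: it imports the statement from Garcia--Ross (and Duren), where it is stated for the upper half-plane and is to be transferred to $\mathbb{C}_+$ by the rotation $s\mapsto is$. That is the option you mention only in your last sentence.

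Your main argument also reverses the logical order the paper adopts. In the paper, the boundary trace comes first, and the Paley--Wiener theorem (Proposition~\ref{prop:Laplace_isometry}) is cited afterwards, with its inversion formula written in terms of $F_0$. You instead first prove directly that every $F\in H^2$ equals $\mathcal{L}f$ for a unique $f\in L^2(0,\infty)$. The steps are: Cauchy's theorem on rectangles whose horizontal sides are chosen, via Fubini, so that $\int_{x_1}^{x_2}|F(x\pm iR_k)|^2\,dx\to 0$; then $x\to\infty$ to force $f=0$ on $t<0$; then monotone convergence as $x\to 0^+$. The $L^2$ trace, the isometry of $\iota$, and the monotonicity of $x\mapsto\|F(x+i\cdot)\|_{L^2}$ then follow from Plancherel. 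Only the a.e.\ limit needs real harmonic analysis, through $F(x+i\cdot)=P_x*F_0$ (with $P_x$ the Poisson kernel of $\mathbb{C}_+$) and the maximal-function bound.

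Because this argument also proves Proposition~\ref{prop:Laplace_isometry}, it must not invoke that proposition, and yours correctly does not. The ``Hilbert space structure'' in the statement also requires completeness of $H^2$. You do not say this explicitly, but it follows at once from your isometric bijection $\mathcal{L}\colon L^2(0,\infty)\to H^2$.

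The trade-off is as follows. The citation is shorter and comes with the full half-plane $H^p$ theory, including nontangential limits and the Poisson and Cauchy representations for all $p$. Your route is self-contained, avoids the disk and general $H^p$ machinery, and yields the Paley--Wiener identification used throughout Section~\ref{sec:Hardy} at the same time.
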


We next recall the Paley--Wiener theorem.

\begin{prop}[{\cite[Section~11.5]{Duren1970}}]\label{prop:Laplace_isometry}
    The Laplace transform $\mathcal{L}$ is a unitary map 
    \begin{equation}
        \mathcal{L}
        \colon
        L^2(0,\infty)
        \to
        H^2.
    \end{equation}
    Moreover, for $F \in H^2$, we have
    \begin{equation}
        \L^{-1}[F](t)
        =
        \frac{1}{2\pi}
        \int_{-\infty}^{\infty}
        e^{iyt}F_0(y) \, dy,
    \end{equation}
    where $F_0$ is the boundary trace of $F$ defined in Proposition~\ref{prop:boundary_trace} and the integral is understood in the $L^2$ sense.
\end{prop}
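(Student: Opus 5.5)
We prove the isometry and the inversion formula directly with Plancherel's theorem, and then prove surjectivity by a contour-shift argument.

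\emph{Isometry.} Let $f\in L^2(0,\infty)$ and $s=x+iy$ with $x>0$. Then $e^{-xt}f\in L^1\cap L^2(0,\infty)$, and
\begin{equation}
    \L[f](x+iy)=\widehat{e^{-x\cdot}f}(y),
\end{equation}
the Fourier transform (normalized as $\hat g(y)=\int e^{-iyt}g(t)\,dt$) of $e^{-xt}f$, extended by zero to $t<0$. Holomorphy of $\L[f]$ on $\C_+$ follows by differentiating under the integral sign, using dominated convergence on half-planes $\Re s\ge x_0>0$. Plancherel's theorem gives
\begin{equation}
    \frac{1}{2\pi}\int_{-\infty}^{\infty}|\L[f](x+iy)|^2\,dy=\int_0^\infty e^{-2xt}|f(t)|^2\,dt .
\end{equation}
The right-hand side increases as $x\downarrow0$, and by monotone convergence its supremum equals $\|f\|_{L^2}^2$. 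Hence $\L[f]\in H^2$ and $\|\L f\|_{H^2}=\|f\|_{L^2}$.

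\emph{Boundary trace and inversion.} As $x\to0$ we have $e^{-xt}f\to f$ in $L^2$. By Plancherel, $\L[f](x+i\cdot)\to\hat f$ in $L^2(\R)$. Comparing with Proposition~\ref{prop:boundary_trace}, this identifies $F_0=\hat f$ for $F=\L f$. The inversion formula then follows from Fourier inversion in $L^2$. This settles everything except surjectivity.

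\emph{Surjectivity.} This is the main step. Fix $G\in H^2$. For each $x>0$ set
\begin{equation}
    h_x(t)\coloneqq e^{xt}\,\frac{1}{2\pi}\int e^{iyt}G(x+iy)\,dy ,
\end{equation}
an $L^2_{\mathrm{loc}}$ function by Plancherel. I first show that $h_x$ does not depend on $x$.

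For a fixed test function $\psi\in C_c^\infty(\R)$, the pairing $\int h_x\psi\,dt$ can be written as $\frac{1}{2\pi}\int G(x+iy)\,\Psi(x+iy)\,dy$, where $\Psi(s)=\int e^{st}\psi(t)\,dt$ is entire. Along vertical lines, $\Psi$ decays faster than any power of $|y|$, locally uniformly in $x$. Apply Cauchy's theorem on rectangles $[x_1,x_2]\times[-R,R]$. The horizontal sides must vanish along some sequence $R_k\to\infty$. To see this, note that
\begin{equation}
    \int_{-\infty}^{\infty}\int_{x_1}^{x_2}|G(x+iy)|^2\,dx\,dy\le 2\pi(x_2-x_1)\|G\|_{H^2}^2<\infty ,
\end{equation}
so the inner integral tends to zero along a sequence of $y=\pm R_k$. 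Combined with Cauchy--Schwarz and the boundedness of $\Psi$ on the strip, this kills the horizontal contributions. I expect this step, making the contour shift rigorous with only $L^2$ control of $G$, to be the main technical obstacle.

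Call the common function $h$. Plancherel gives, for every $x>0$,
\begin{equation}
    \int_{\R}e^{-2xt}|h(t)|^2\,dt=\frac{1}{2\pi}\int|G(x+iy)|^2\,dy\le\|G\|_{H^2}^2 .
\end{equation}
Letting $x\to\infty$ shows that $h=0$ a.e.\ on $(-\infty,0)$, since $e^{-2xt}\to\infty$ there. Letting $x\to0$ with monotone convergence gives $h\in L^2(0,\infty)$. By construction $G(x+i\cdot)=\widehat{e^{-x\cdot}h}$, that is, $G=\L h$. Thus $\L$ is onto, and together with the isometry this shows that $\L$ is unitary.
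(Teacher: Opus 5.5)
Your proof is correct. It takes a different route from the paper only in the sense that the paper gives no proof: it quotes Duren, where the theorem is stated for the upper half-plane and rests on the boundary-value theory of $\mathfrak{h}^2$ developed earlier in that chapter. What you wrote is the classical contour-shift proof of Paley--Wiener (as in Rudin's \emph{Real and Complex Analysis}, Ch.~19), rotated to $\mathbb{C}_+$ and tested against $C_c^\infty$ functions instead of truncated Fourier integrals.

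The step you flagged goes through as you describe. The function $y\mapsto\int_{x_1}^{x_2}|G(x+iy)|^2\,dx$ is continuous and integrable on $\mathbb{R}$, so it tends to $0$ along some sequence $y=R_k\to+\infty$ and along some sequence $y=-R_k'\to-\infty$; the top and bottom sides of the rectangle may sit at different heights. Cauchy--Schwarz together with $\sup_{\mathrm{strip}}|\Psi|<\infty$ then kills the horizontal sides. The vertical sides converge to full-line integrals because $G\Psi$ is integrable on each vertical line.

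Compared with the citation, your route buys the following.
\begin{itemize}
\item It is self-contained, using only the sup-over-lines definition of $H^2$, Plancherel and Cauchy's theorem.
\item It works directly in the right half-plane with the paper's $1/(2\pi)$ normalization, so no transfer between half-planes or normalizations is needed.
\item It yields the $L^2$-convergence part of Proposition~\ref{prop:boundary_trace} as a by-product, since every $G=\mathcal{L}h$ satisfies $G(x+i\cdot)\to\widehat{h}$ in $L^2(\mathbb{R})$.
\item It needs no pointwise bound on $G$. This matters because in the paper such bounds come from Proposition~\ref{prop:reproducing_kernel}, which is itself derived from the present statement.
\end{itemize}
The citation buys brevity and keeps the paper aligned with the Hardy-space theory it already imports for boundary traces.
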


An important property of the Hardy space $H^2$ is that it is a reproducing kernel Hilbert space.

\begin{prop}[{\cite[Section~10.3]{GarciaRoss2015}}]\label{prop:reproducing_kernel}
    For $\lambda>0$, the function
    \begin{equation}
        \frac{1}{\lambda+s}
    \end{equation}
    is a reproducing kernel of $H^2$ in the following sense:
    \begin{equation}
        F(\lambda)
        =
        \left\langle
            F,\frac{1}{\lambda+s}
        \right\rangle_{H^2}
        \qquad
        (F \in H^2).
    \end{equation}
\end{prop}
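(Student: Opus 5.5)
The reproducing property reduces to Plancherel together with Cauchy's integral formula, applied to the boundary trace. I would first compute the boundary trace of $K_\lambda(s)=1/(\lambda+s)$. This function is holomorphic on a neighborhood of $\overline{\C_+}$, so its trace is simply $K_{\lambda,0}(y)=1/(\lambda+iy)$. Hence, by Proposition~\ref{prop:boundary_trace},
\begin{equation}
    \langle F,K_\lambda\rangle_{H^2}
    =\frac{1}{2\pi}\int_{-\infty}^{\infty}F_0(y)\,\overline{\frac{1}{\lambda+iy}}\,dy
    =\frac{1}{2\pi}\int_{-\infty}^{\infty}\frac{F_0(y)}{\lambda-iy}\,dy .
\end{equation}

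The cleanest route avoids contour arguments by passing through the Laplace side. By Proposition~\ref{prop:Laplace_isometry}, write $F=\L f$ with $f\in L^2(0,\infty)$. By Proposition~\ref{prop:Laplace_transform_exp}, $K_\lambda=\L[e^{-\lambda t}]$, and $e^{-\lambda t}\in L^2(0,\infty)$ because $\lambda>0$. Since $\L$ is unitary, it preserves inner products, so
\begin{equation}
    \langle F,K_\lambda\rangle_{H^2}=\langle f,e^{-\lambda t}\rangle_{L^2(0,\infty)}=\int_0^\infty f(t)e^{-\lambda t}\,dt .
\end{equation}
The function $e^{-\lambda t}$ is real, so conjugation does not change it. The right-hand side is exactly $\L[f](\lambda)=F(\lambda)$, because $\lambda\in\C_+$ and the Laplace integral converges absolutely there by Cauchy--Schwarz.

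The only point requiring care is consistency of normalizations. The inner product on $H^2$ is defined through $\iota=(2\pi)^{-1/2}F_0$, and the statement of unitarity in Proposition~\ref{prop:Laplace_isometry} must be with respect to precisely this inner product. A quick check with $f=e^{-\lambda t}$ confirms this: $\|f\|_{L^2}^2=1/(2\lambda)$, while
\begin{equation}
    \frac{1}{2\pi}\int_{-\infty}^{\infty}\frac{dy}{\lambda^2+y^2}=\frac{1}{2\lambda}.
\end{equation}
This verification is the main obstacle, and it is mild. Once it holds, polarization of the isometry gives the inner-product identity used above, and the proof is complete.
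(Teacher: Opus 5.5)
Your proof is correct and is essentially the paper's argument: the paper also writes $1/(\lambda+s)=\mathcal{L}[e^{-\lambda t}]$ and uses the unitarity in Proposition~\ref{prop:Laplace_isometry} to move the inner product to $L^2(0,\infty)$. It then recognizes $\langle \mathcal{L}^{-1}[F],e^{-\lambda t}\rangle_{L^2(0,\infty)}=\mathcal{L}[\mathcal{L}^{-1}[F]](\lambda)=F(\lambda)$. Your boundary-trace formula and the normalization check with $f=e^{-\lambda t}$ are correct but unnecessary, since that proposition already asserts unitarity for the stated $H^2$ inner product.
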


\begin{proof}
    By Propositions~\ref{prop:Laplace_transform_exp} and~\ref{prop:Laplace_isometry}, we have
    \begin{equation}
        \left\langle
            F,\frac{1}{\lambda+s}
        \right\rangle_{H^2}
        =
        \left\langle
            \mathcal{L}^{-1}[F],e^{-\lambda t}
        \right\rangle_{L^2(0,\infty)}
        =
        \mathcal{L}[\mathcal{L}^{-1}[F]](\lambda)
        =
        F(\lambda).
    \end{equation}
\end{proof}

\subsection{Inner functions and model spaces}
We next review some definitions and facts on inner functions on $\C_+$ and their model spaces.

\begin{dfn}
    $F \in \H(\C_+)$ is called an inner function on $\C_+$ if it is bounded in $\C_+$ and the limit
    \begin{equation}
        F_0(y)
        \coloneqq
        \lim_{x \to 0}F(x+iy)
    \end{equation}
    exists for almost every $y \in \R$ and
    \begin{equation}
        |F_0(y)| = 1
    \end{equation}
    for almost every $y \in \R$.
\end{dfn}

Important examples of inner functions are given by finite Blaschke products. For $0<\lambda_1<\lambda_2<\cdots<\lambda_N$, let
\begin{equation}\label{eq:Blaschke}
    \Theta(s)
    \coloneqq
    \prod_{n=1}^{N}
    \frac{\lambda_n-s}{\lambda_n+s}.
\end{equation}
The function $\Theta$ is an inner function on $\C_+$ and is called the finite Blaschke product with zeros $(\lambda_n)_{n=1}^{N}$.

Let $F$ be an inner function on $\C_+$. By definition, we have
\begin{equation}
    \| FG \|_{H^2}
    =
    \| G \|_{H^2} \qquad (G \in H^2).
\end{equation}
Thus the multiplication operator $M_F$ induced by $F$ defines an isometry
\begin{equation}
    M_F \colon H^2 \to H^2.
\end{equation}
This implies that $F H^2$ is a closed subspace of $H^2$, which allows us to take its orthogonal complement in $H^2$.

\begin{dfn}
    Let $F$ be an inner function on $\C_+$. We denote the orthogonal complement of $F H^2$ in $H^2$ by
    \begin{equation}
        K_{F}
        \coloneqq
        H^2 \ominus (F H^2).
    \end{equation}
    The space $K_F$ is called the model space of $F$.
\end{dfn}

We give two examples of model spaces important for our purpose.

\begin{prop}\label{prop:model_exp}
    For $\tau>0$, we have
    \begin{equation}
        K_{e^{-\tau s}}
        =
        \L
        \left(
            L^2(0,\tau)
        \right).
    \end{equation}
    Here, $L^2(0,\tau)$ is naturally embedded into $L^2(0,\infty)$.
\end{prop}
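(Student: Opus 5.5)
We transport the problem to $L^2(0,\infty)$ with the unitary Laplace transform of Proposition~\ref{prop:Laplace_isometry}. There it becomes the statement that the orthogonal complement of $\L^{-1}(e^{-\tau s}H^2)$ in $L^2(0,\infty)$ is $L^2(0,\tau)$. Since $\L$ is unitary, it maps orthogonal complements to orthogonal complements:
\begin{equation}
    K_{e^{-\tau s}}
    =
    \L\Bigl(L^2(0,\infty)\ominus \L^{-1}\bigl(e^{-\tau s}H^2\bigr)\Bigr).
\end{equation}
It therefore suffices to identify $\L^{-1}(e^{-\tau s}H^2)$. We also note that $e^{-\tau s}$ is indeed inner: it is bounded by $1$ on $\C_+$, and its boundary values $e^{-i\tau y}$ have modulus one.

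First, we compute $\L^{-1}(e^{-\tau s}H^2)$. Take $G\in H^2$ and set $g=\L^{-1}G\in L^2(0,\infty)$. By Proposition~\ref{prop:Laplace_transform_delay},
\begin{equation}
    e^{-\tau s}G=\L\bigl[\bm{1}_{(\tau,\infty)}g(t-\tau)\bigr].
\end{equation}
Conversely, let $h\in L^2(0,\infty)$ vanish a.e.\ on $(0,\tau)$. Then $g(t)\coloneqq h(t+\tau)$ lies in $L^2(0,\infty)$, and $h=\bm{1}_{(\tau,\infty)}g(\cdot-\tau)$. Applying Proposition~\ref{prop:Laplace_transform_delay} again gives $\L h=e^{-\tau s}\L g\in e^{-\tau s}H^2$. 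Since $\L$ is a bijection, these two directions together show
\begin{equation}
    \L^{-1}\bigl(e^{-\tau s}H^2\bigr)=\{h\in L^2(0,\infty): h=0 \text{ a.e. on }(0,\tau)\}=L^2(\tau,\infty).
\end{equation}

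Second, we take the orthogonal complement. In $L^2(0,\infty)=L^2(0,\tau)\oplus L^2(\tau,\infty)$, the complement of $L^2(\tau,\infty)$ is $L^2(0,\tau)$. To check this directly, suppose $f\perp L^2(\tau,\infty)$. Testing against $\bm{1}_{(\tau,\infty)}f$ gives $\|f\|_{L^2(\tau,\infty)}^2=0$, so $f$ vanishes on $(\tau,\infty)$. Conversely, every $f\in L^2(0,\tau)$ is orthogonal to $L^2(\tau,\infty)$ because the supports are disjoint. Applying $\L$ then yields $K_{e^{-\tau s}}=\L(L^2(0,\tau))$, which is the claim.

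No step is a genuine obstacle. The only point needing care is the reverse inclusion in the first step, namely that every element of $e^{-\tau s}H^2$ comes from a function vanishing on $(0,\tau)$. This follows because $G$ ranges over all of $H^2$ and $\L$ is surjective onto $H^2$, so every element has the form $e^{-\tau s}\L g$ with $g\in L^2(0,\infty)$.
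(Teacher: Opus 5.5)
Your proposal is correct and follows the paper's proof: you identify $e^{-\tau s}H^2=\L\bigl(L^2(\tau,\infty)\bigr)$ via the delay property and the unitarity of $\L$, then take orthogonal complements using $L^2(0,\tau)=L^2(0,\infty)\ominus L^2(\tau,\infty)$. You spell out both inclusions and the complement computation, which the paper leaves implicit.
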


\begin{proof}
    By Propositions~\ref{prop:Laplace_transform_delay} and~\ref{prop:Laplace_isometry}, we have
    \begin{equation}
        e^{-\tau s} H^2
        =
        \L \left( L^2(\tau,\infty) \right).
    \end{equation}
    Since
    \begin{equation}
        L^2(0,\tau)
        =
        L^2(0,\infty)
        \ominus
        L^2(\tau,\infty),
    \end{equation}
    we conclude that
    \begin{equation}
        \L
        \left(
            L^2(0,\tau)
        \right)
        =
        H^2
        \ominus
        e^{-\tau s} H^2
        =
        K_{e^{-\tau s}}.
    \end{equation}
\end{proof}

\begin{prop}\label{prop:model_Theta}
    Let $\Theta$ be the inner function defined by~\eqref{eq:Blaschke}. Then
    \begin{equation}\label{eq:model_Theta}
        K_{\Theta}
        =
        \Span
        \left\{
            \frac{1}{\lambda_1+s}, \ldots, \frac{1}{\lambda_N+s}
        \right\}
        =
        \mathcal{L}
        \left(
        \Span
        \left\{
            e^{-\lambda_1 t}, \ldots, e^{-\lambda_N t}
        \right\}
        \right).
    \end{equation}
\end{prop}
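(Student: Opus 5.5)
The second equality in~\eqref{eq:model_Theta} is immediate: by Proposition~\ref{prop:Laplace_transform_exp}, $\L[e^{-\lambda_n t}]=1/(\lambda_n+s)$, and $\L$ is linear. So the plan concentrates on the first equality, $K_\Theta=\Span\{1/(\lambda_n+s)\}_{n=1}^N$. I will write $V$ for this span and show $V^{\perp}=\Theta H^2$ inside $H^2$. Since $V$ is finite dimensional, hence closed, and $\Theta H^2$ is closed, this gives $V=(\Theta H^2)^{\perp}=K_\Theta$.

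\textbf{Step 1: $V^\perp\subset\Theta H^2$.} By the reproducing property (Proposition~\ref{prop:reproducing_kernel}), $F\in V^\perp$ if and only if $F(\lambda_n)=0$ for $n=1,\dots,N$.

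1. Given such $F$, set $G\coloneqq F/\Theta$. This is holomorphic on $\C_+$: $\Theta$ vanishes only at the simple zeros $\lambda_n$, where $F$ also vanishes, and the poles $-\lambda_n$ of $\Theta$ lie outside $\C_+$.
2. To see that $G\in H^2$, note that $|\Theta(s)|\to1$ uniformly as $|s|\to\infty$ in $\overline{\C_+}$. Also $|\Theta(x+iy)|\ge c>0$ whenever $x$ is small, since the $\lambda_n$ are positive and away from the imaginary axis.
3. On the region $\{0<x\le\delta\}$ with $\delta<\lambda_1$, we then get $|G|\le c^{-1}|F|$, so $\sup_{0<x\le\delta}\int|G(x+iy)|^2\,dy<\infty$.
4. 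For $x\ge\delta$, I will use the Paley--Wiener representation, which shows $\int|F(x+iy)|^2\,dy$ is nonincreasing in $x$. It then remains to control $1/\Theta$ near the zeros.

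\textbf{Step 1, cleaner alternative.} Rather than the region bookkeeping above, I will divide by one Blaschke factor at a time. If $F\in H^2$ and $F(\lambda)=0$, then $F(s)\,\frac{\lambda+s}{\lambda-s}\in H^2$. This holds because $\frac{\lambda+s}{\lambda-s}=1+\frac{2\lambda}{\lambda-s}$ and the following two estimates apply.

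1. Away from a disc around $\lambda$, $\bigl|\frac{\lambda+s}{\lambda-s}\bigr|$ is bounded.
2. Inside the disc, the quotient $F(s)/(\lambda-s)$ is holomorphic, hence bounded on compacts, so its contribution to the sup of line integrals is bounded.

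\textbf{Step 2: $\Theta H^2\subset V^\perp$.} This direction is immediate: any $\Theta G$ with $G\in H^2$ vanishes at each $\lambda_n$, so it is orthogonal to $V$ by reproducing.

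\textbf{Main obstacle.} The only delicate point is the $H^2$ membership of $F/\Theta$, which is handled by the one-factor-at-a-time argument in Step 1. An alternative I may use instead is a dimension count in $L^2(0,\infty)$ via $\L^{-1}$. However, that still requires the division step, so I will rely on the factor-by-factor bound.
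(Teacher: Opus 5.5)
Your proposal is correct and is essentially the paper's proof: both arguments show $\Theta H^2\subset V^{\perp}$ and $V^{\perp}\subset\Theta H^2$ using the reproducing property $F(\lambda_n)=\langle F,1/(\lambda_n+s)\rangle_{H^2}$, and both obtain the second equality from $\L[e^{-\lambda_n t}]=1/(\lambda_n+s)$. Your one-factor-at-a-time argument that $F\cdot\frac{\lambda+s}{\lambda-s}\in H^2$ when $F(\lambda)=0$ fills in a step the paper only asserts (``Hence $F/\Theta\in H^2$''): the multiplier is bounded off a small disc around $\lambda$, the quotient is holomorphic and hence bounded on the disc, and the iteration works because the remaining $\lambda_k$ are still zeros after each division.
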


\begin{proof}
    For $F \in H^2$, Proposition~\ref{prop:reproducing_kernel} implies
    \begin{equation}
        \left\langle
            \Theta F, \frac{1}{\lambda_n+s}
        \right\rangle_{H^2}
        =
        \Theta(\lambda_n) F(\lambda_n)
        =
        0
    \end{equation}
    for $1 \leq n \leq N$. Hence
    \begin{equation}\label{eq:subset}
        \Theta H^2
        \subset
        \Span
        \left\{
            \frac{1}{\lambda_1+s}, \ldots, \frac{1}{\lambda_N+s}
        \right\}^{\perp}.
    \end{equation}
    On the other hand, if
    \begin{equation}
        F
        \in 
        \Span
        \left\{
            \frac{1}{\lambda_1+s}, \ldots, \frac{1}{\lambda_N+s}
        \right\}^{\perp},
    \end{equation}
    then $F(\lambda_n)=\langle F,1/(\lambda_n+s) \rangle_{H^2}=0$ for $1 \leq n \leq N$. Hence $F/\Theta \in H^2$. This implies
    \begin{equation}\label{eq:supset}
        \Theta H^2
        \supset
        \Span
        \left\{
            \frac{1}{\lambda_1+s}, \ldots, \frac{1}{\lambda_N+s}
        \right\}^{\perp}.
    \end{equation}
    Combining~\eqref{eq:subset} and~\eqref{eq:supset}, we obtain the first equality in~\eqref{eq:model_Theta}; the second equality follows from Proposition~\ref{prop:Laplace_transform_exp}.
\end{proof}

Propositions~\ref{prop:model_exp} and~\ref{prop:model_Theta} explain the role of model spaces in our analysis. Under the inverse Laplace transform, the model space $K_{\Theta}$ is precisely the space of temporal profiles generated by the exponential family $(e^{-\lambda_n t})_{n=1}^{N}$, whereas the model space $K_{e^{-\tau s}}$ corresponds to $L^2(0,\tau)$. In particular, if $\lambda_n = \lambda_{n}^{\eps}$ is the Dirichlet eigenvalue~\eqref{eq:lambda_n}, the model space $K_{\Theta}$ describes possible profiles of the boundary flux~\eqref{eq:boundary_flux_spectral}. Hence the structure of the boundary flux in the vanishing viscosity limit can be understood by analyzing the corresponding limit of the model spaces. The relevance of the exponential inner function $e^{-\tau s}$ will become clear in the next two subsections. With a suitable choice of the truncation index $N_{\eps}$, the Blaschke product $\Theta_{\eps}$ associated with $(\lambda_{n}^{\eps})_{n=1}^{N_{\eps}}$ converges to $e^{-2s}$. Correspondingly, the model space $K_{\Theta_{\eps}}$ converges in the sense of projections to the Laplace transform of $L^2(0,2)$.

\subsection{The vanishing viscosity limit of the Blaschke products}
We next compute the vanishing viscosity limit of the finite Blaschke products associated with the eigenvalues $(\lambda_{n}^{\eps})_{n=1}^{\infty}$.

\begin{prop}\label{prop:convergence_Blaschke_products}
    Let $\lambda_{n}^{\eps}$ be defined by~\eqref{eq:lambda_n}. Set $N_{\eps} = \lfloor 1/\eps^2 \rfloor$ and
    \begin{equation}\label{eq:Blaschke_eps}
        \Theta_{\eps}(s)
        \coloneqq
        \prod_{n=1}^{N_{\eps}}
        \frac{\lambda_{n}^{\eps}-s}{\lambda_{n}^{\eps}+s}.
    \end{equation}
    Then we have
    \begin{equation}
        \Theta_{\eps}(iy)
        \to
        e^{-2iy}
    \end{equation}
    for all $y \in \R$.
\end{prop}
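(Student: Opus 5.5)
The plan is to pass to arguments. For $y\in\R$ and $\lambda>0$ we have $\bigl|\frac{\lambda-iy}{\lambda+iy}\bigr|=1$, and
\begin{equation}
    \frac{\lambda-iy}{\lambda+iy}=\exp\bigl(-2i\arctan(y/\lambda)\bigr).
\end{equation}
Hence
\begin{equation}
    \Theta_\eps(iy)=\exp\Bigl(-2i\sum_{n=1}^{N_\eps}\arctan\bigl(y/\lambda_n^\eps\bigr)\Bigr).
\end{equation}
It therefore suffices to show that, for each fixed $y$, the sum $S_\eps(y)\coloneqq\sum_{n=1}^{N_\eps}\arctan(y/\lambda_n^\eps)$ tends to $y$ as $\eps\to0$. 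This is a genuine real limit, not a congruence modulo $\pi$.

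\textbf{Step 1: linearize the arctangent.} Since $\lambda_n^\eps\ge 1/(4\eps)$, every argument satisfies $|y/\lambda_n^\eps|\le 4\eps|y|$, which is small uniformly in $n$. The bound $|\arctan u-u|\le |u|^3/3$ then gives
\begin{equation}
    \Bigl|S_\eps(y)-y\sum_{n=1}^{N_\eps}\frac{1}{\lambda_n^\eps}\Bigr|
    \le \frac{N_\eps (4\eps|y|)^3}{3}
    \lesssim \eps^{-2}\eps^3|y|^3\to0 .
\end{equation}
This is exactly where the choice $N_\eps=\lfloor 1/\eps^2\rfloor$ enters: it makes $N_\eps\eps^3\to0$. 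A sharper bound using the decay of $1/\lambda_n^\eps$ for large $n$ is also available but not needed.

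\textbf{Step 2: evaluate $\sum 1/\lambda_n^\eps$.} Set $a=1/(2\eps)$, so that $\lambda_n^\eps=\eps(a^2+\pi^2n^2)$. Using the classical partial-fraction identity
\begin{equation}
    \sum_{n\ge1}\frac{1}{n^2+b^2}=\frac{\pi b\coth(\pi b)-1}{2b^2}
\end{equation}
with $b=a/\pi$, we get
\begin{equation}
    \sum_{n=1}^\infty\frac{1}{\lambda_n^\eps}=\frac1\eps\cdot\frac{a\coth a-1}{2a^2}=\coth a-\frac1a\longrightarrow 1 .
\end{equation}
The truncation error is controlled by
\begin{equation}
    \sum_{n>N_\eps}\frac{1}{\lambda_n^\eps}\le\sum_{n>N_\eps}\frac{1}{\eps\pi^2n^2}\le\frac{1}{\eps\pi^2N_\eps}\sim\frac{\eps}{\pi^2}\to0 .
\end{equation}
Hence $\sum_{n=1}^{N_\eps}1/\lambda_n^\eps\to1$. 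Alternatively, one can compare with the Riemann integral $\frac1\eps\int_0^\infty\frac{dn}{a^2+\pi^2n^2}=1$, which gives the same limit.

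\textbf{Conclusion.} Combining Steps 1 and 2 gives $S_\eps(y)\to y$, and therefore $\Theta_\eps(iy)\to e^{-2iy}$ for every $y\in\R$.

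The only real obstacle is balancing the truncation $N_\eps$. It must be large enough that the tail of $\sum 1/\lambda_n^\eps$ vanishes, which needs $\eps N_\eps\to\infty$. It must also be small enough that the accumulated cubic errors vanish, which needs $\eps^3N_\eps\to0$. The choice $N_\eps=\lfloor \eps^{-2}\rfloor$ satisfies both.
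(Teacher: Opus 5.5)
Your proof is correct and is essentially the paper's argument: the same $\arctan$ representation of $\Theta_\eps(iy)$, the same crude cubic bound $\sum_{n\le N_\eps}(\lambda_n^\eps)^{-3}\le 64\eps$, and the limit $\sum_{n\le N_\eps}1/\lambda_n^\eps\to1$, which the paper gets by integral comparison instead of your closed form $\coth a-1/a$ plus a tail estimate. One small correction to your closing remark: the requirement $\eps^3N_\eps\to0$ comes only from the crude bound, since $\sum_{n\ge1}(\lambda_n^\eps)^{-3}=O(\eps^2)$, so this proposition needs only $\eps N_\eps\to\infty$ (the upper bound $N_\eps\le\eps^{-2}$ is used elsewhere in the paper, e.g.\ in the negative-speed counterexample).
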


\begin{proof}
    Fix $y \in \R$. For $\lambda>0$, we have the formula
    \begin{equation}
        \frac{\lambda-iy}{\lambda+iy}
        =
        \exp
        \left(
            -2i \arctan \left( \frac{y}{\lambda} \right)
        \right).
    \end{equation}
    See Figure~\ref{fig:blaschke-angle}. Hence
    \begin{equation}\label{eq:theta_boundary_arctan}
        \Theta_{\eps}(iy)
        =
        \exp
        \left(
            -2i \sum_{n=1}^{N_{\eps}}\arctan \left( \frac{y}{\lambda_{n}^{\eps}} \right)
        \right).
    \end{equation}
    Let
    \begin{equation}\label{eq:deps}
        d_{\eps}
        \coloneqq
        \sum_{n=1}^{N_{\eps}}
        \frac{1}{\lambda_{n}^{\eps}}
        =
        \eps
        \sum_{n=1}^{N_{\eps}}
        \frac{1}{1/4+\pi^2 (\eps n)^2}.
    \end{equation}
    Since
    \begin{equation}\label{eq:deps_lower_upper}
        \int_{\eps}^{\eps(N_{\eps}+1)}\frac{dx}{1/4+\pi^2 x^2}
        \leq
        d_{\eps}
        \leq
        \int_{0}^{\eps N_{\eps}}\frac{dx}{1/4+\pi^2 x^2},
    \end{equation}
    it follows that
    \begin{equation}\label{eq:theta_boundary_d}
        \lim_{\eps \to 0}d_{\eps}
        =
        \int_{0}^{\infty}\frac{dx}{1/4+\pi^2 x^2}
        =
        1.
    \end{equation}
    On the other hand, we have
    \begin{equation}\label{eq:theta_boundary_e}
        e_{\eps}
        \coloneqq
        \sum_{n=1}^{N_{\eps}}
        \frac{1}{(\lambda_{n}^{\eps})^3}
        \leq
        \eps^3
        \sum_{n=1}^{N_{\eps}}
        \frac{1}{(1/4)^3}
        \leq 64\eps
        \to
        0
        \qquad
        \text{as $\eps \to 0$.}
    \end{equation}
    Since
    \begin{equation}
        \max_{1 \leq n \leq N_{\eps}}
        \frac{|y|}{\lambda_{n}^{\eps}}
        \leq
        4 |y| \eps
    \end{equation}
    and
    \begin{equation}
        \arctan x
        =
        x
        +
        O(|x|^3)
        \qquad
        \text{as $x \to 0$},
    \end{equation}
    we conclude from~\eqref{eq:theta_boundary_arctan},~\eqref{eq:theta_boundary_d}, and~\eqref{eq:theta_boundary_e} that
    \begin{equation}
        \lim_{\eps \to 0}\Theta_{\eps}(iy)
        =
        \exp(-2iy).
    \end{equation}
    This ends the proof.
\end{proof}

\begin{figure}[htbp]
    \centering
    \includegraphics[width=0.5\textwidth]{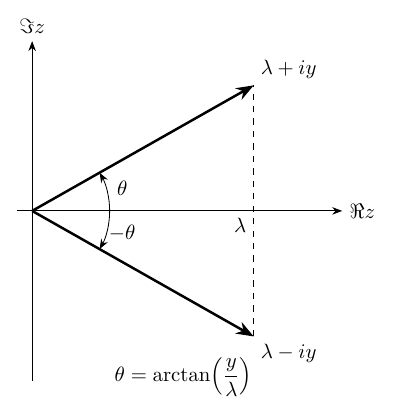}
    \caption{The configuration of $\lambda \pm iy$ in the complex plane.}
    \label{fig:blaschke-angle}
\end{figure}

\begin{remark}
    We can also show that $\Theta_{\eps}(s) \to e^{-2s}$ uniformly on any compact subset of $\C_+$ by using~\eqref{eq:theta_boundary_d} and~\eqref{eq:theta_boundary_e}.
    This also follows from Proposition~\ref{prop:convergence_Blaschke_products} and the Poisson representation formula for bounded Hardy functions on $\C_+$; see~\cite[Theorem~11.2]{Duren1970}.
\end{remark}

\subsection{Convergence of model-space projections}
The convergence of the finite Blaschke products proved in the previous section can be transferred to the convergence of the projections onto the corresponding model spaces. We prove this in a general setting.

\begin{prop}\label{prop:convergence_model_spaces}
    Let $(F_n)_{n=1}^{\infty}$ and $F$ be inner functions on $\C_+$. If
    \begin{equation}\label{eq:convergence_inner_functions}
        \lim_{n \to \infty}F_n(iy)
        =
        F(iy)
    \end{equation}
    for almost every $y \in \R$, then
    \begin{equation}\label{eq:convergence_projections_model_spaces}
        \lim_{n \to \infty}
        P_{K_{F_n}} G
        =
        P_{K_F} G
        \qquad
        \text{in $H^2$}
    \end{equation}
    for all $G \in H^2$.
\end{prop}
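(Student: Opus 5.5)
Since $K_F = H^2 \ominus F H^2$, we write $P_{K_F} = I - M_F P_+ M_{\overline{F}}$, where the multiplication and projection operators act on boundary traces. To make this precise, identify $H^2$ with a closed subspace of $L^2(\R)$ via the isometry $\iota$ of Proposition~\ref{prop:boundary_trace}. Let $P_+ \colon L^2(\R) \to \iota(H^2)$ denote the orthogonal (Szeg\H{o}/Riesz) projection. Since $M_F$ is an isometry on $H^2$ with range $FH^2$, the orthogonal projection onto $FH^2$ is $M_F M_F^*$. On the boundary, $M_F^* G = P_+(\overline{F_0} G_0)$, because $|F_0|=1$ a.e. gives $\langle FH, G\rangle = \langle H, \overline{F_0}G_0\rangle_{L^2}$ for all $H\in H^2$. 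Hence
\begin{equation}
    P_{K_F}G = G - F\, P_+\bigl(\overline{F_0}\,G_0\bigr),
\end{equation}
understood on boundary traces. The same formula holds with $F$ replaced by $F_n$.

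It therefore suffices to prove $F_n P_+(\overline{F_{n,0}}G_0) \to F P_+(\overline{F_0}G_0)$ in $L^2(\R)$. We split the difference as
\begin{equation}
    F_n P_+(\overline{F_{n,0}}G_0) - F P_+(\overline{F_0}G_0)
    = F_n P_+\bigl((\overline{F_{n,0}}-\overline{F_0})G_0\bigr) + (F_n - F)\, P_+(\overline{F_0}G_0).
\end{equation}
For the first term, $|F_n|\le 1$ and $\|P_+\|\le 1$, so its norm is bounded by $\|(F_{n,0}-F_0)G_0\|_{L^2}$. The second term has norm $\|(F_{n,0}-F_0)H\|_{L^2}$, where $H = P_+(\overline{F_0}G_0)\in L^2(\R)$ is fixed.

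Both expressions tend to zero by dominated convergence. The integrands converge to $0$ almost everywhere by hypothesis~\eqref{eq:convergence_inner_functions}, and they are dominated by $4|G_0|^2$ and $4|H|^2$ respectively, which are integrable.

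The main point requiring care is the identification of boundary traces. The hypothesis concerns the pointwise values $F_n(iy)$, while the argument uses the a.e.\ nontangential limits $F_{n,0}$. For the Blaschke products of interest these functions are continuous up to $i\R$, so the two coincide. In general, I would read the hypothesis as a statement about the boundary traces $F_{n,0}$ and $F_0$; this is the natural interpretation, since an inner function is determined by its trace. A second point to check is that multiplication of traces corresponds to multiplication in $H^2$, i.e.\ $(FG)_0 = F_0 G_0$. This is standard, since a bounded function times an $H^2$ function has the product of the nontangential limits as its trace. With these identifications, the remaining steps are routine.
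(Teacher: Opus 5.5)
Your proposal is correct and follows essentially the same route as the paper. The paper also identifies $H^2$ with a closed subspace of $L^2(\R)$ and writes $P_{K_{F_n}} = I - M_{F_n}P_{H^2}M_{\overline{F_n}}$; it obtains this via $P_{UX}=UP_XU^*$ for the unitary $M_{F_n}$ on $L^2(\R)$, where you use $M_FM_F^*$, and it then splits the difference into exactly your two terms, each controlled by dominated convergence.
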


\begin{proof}
    We first note that the following identity holds for any Hilbert space $H$, its closed subspace $X$, and a unitary operator $U \colon H \to H$:
    \begin{equation}\label{eq:projection_identity}
        P_{UX}
        =
        U
        P_X
        U^*,
    \end{equation}
    where $P_X$ and $P_{UX}$ are the orthogonal projections to $X$ and $UX$, respectively.
    
    In this proof, by virtue of Proposition~\ref{prop:boundary_trace}, we identify $H^2$ as a closed subspace of $L^2(\R)$. Since the multiplication operators $M_{F_n} \colon L^2(\R) \to L^2(\R)$ and $M_F \colon L^2(\R) \to L^2(\R)$ are unitary,~\eqref{eq:projection_identity} implies
    \begin{equation}
        P_{F_n H^2}
        =
        M_{F_n} P_{H^2} M_{\overline{F_n}},
        \qquad
        P_{F H^2}
        =
        M_{F} P_{H^2} M_{\overline{F}}.
    \end{equation}
    Hence
    \begin{equation}\label{eq:projection_model_space_decomposition}
        P_{K_{F_n}}
        =
        I
        -
        M_{F_n} P_{H^2} M_{\overline{F_n}},
        \qquad
        P_{K_F}
        =
        I
        -
        M_{F} P_{H^2} M_{\overline{F}}.
    \end{equation}
    On the other hand, by~\eqref{eq:convergence_inner_functions}, $|F_n|=|F|=1$ on $i\R = \partial \C_+$, so Lebesgue's dominated convergence theorem gives
    \begin{equation}
        \lim_{n \to \infty}
        M_{F_n} G
        =
        M_F G,
        \qquad
        \lim_{n \to \infty}
        M_{\overline{F_n}} G
        =
        M_{\overline{F}} G
        \qquad
        \text{in $L^2(\R)$},
    \end{equation}
    for all $G \in L^2(\R)$. Therefore, for any $G \in H^2$,
    \begin{align}
        \begin{aligned}
            &
            \|
                M_{F_n} P_{H^2} M_{\overline{F_n}} G
                -
                M_{F} P_{H^2} M_{\overline{F}} G
            \|_{H^2}
            \\
            & \leq
            \|
                M_{F_n} P_{H^2}
                \left(
                    M_{\overline{F_n}}
                    -
                    M_{\overline{F}}
                \right)
                G
            \|_{H^2}
            +
            \|
                (M_{F_n}-M_{F}) P_{H^2} M_{\overline{F}} G
            \|_{H^2}
            \\
            & \to 0
        \end{aligned}
    \end{align}
    as $n \to \infty$. Combining this with~\eqref{eq:projection_model_space_decomposition} yields~\eqref{eq:convergence_projections_model_spaces}.
\end{proof}

Combining Propositions~\ref{prop:convergence_model_spaces} and~\ref{prop:convergence_Blaschke_products}, we obtain the following.

\begin{prop}\label{prop:convergence_projections_time_exponentials}
    Let
    \begin{equation}
        \mathcal{E}_{\eps}
        \coloneqq
        \Span
        \left\{
            e^{-\lambda_{n}^{\eps} t} : 1 \leq n \leq N_{\eps}
        \right\}
        \subset
        L^2(0,\infty)
    \end{equation}
    and denote by $\Pi_{\eps} \colon L^2(0,\infty) \to L^2(0,\infty)$ the orthogonal projection to $\mathcal{E}_{\eps}$. Then
    \begin{equation}
        \lim_{\eps \to 0}
        \Pi_{\eps} f
        =
        \bm{1}_{(0,2)} f
        \qquad
        \text{in $L^2(0,\infty)$}
    \end{equation}
    for all $f \in L^2(0,\infty)$.
\end{prop}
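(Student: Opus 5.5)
The plan is to transport the statement to the Hardy space via the Laplace transform and then invoke Propositions~\ref{prop:convergence_model_spaces} and~\ref{prop:convergence_Blaschke_products}.

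\emph{Identify the projections.} Since $\mathcal{L}\colon L^2(0,\infty)\to H^2$ is unitary (Proposition~\ref{prop:Laplace_isometry}), the identity~\eqref{eq:projection_identity} with $U=\mathcal{L}$ gives $\Pi_{\eps}=\mathcal{L}^{-1}P_{\mathcal{L}\mathcal{E}_{\eps}}\mathcal{L}$. By Proposition~\ref{prop:model_Theta}, applied to the distinct positive zeros $\lambda_1^{\eps}<\dots<\lambda_{N_{\eps}}^{\eps}$, we have $\mathcal{L}\mathcal{E}_{\eps}=K_{\Theta_{\eps}}$, with $\Theta_{\eps}$ as in~\eqref{eq:Blaschke_eps}. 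Thus $\Pi_{\eps}=\mathcal{L}^{-1}P_{K_{\Theta_{\eps}}}\mathcal{L}$. Likewise, Proposition~\ref{prop:model_exp} with $\tau=2$ gives $K_{e^{-2s}}=\mathcal{L}(L^2(0,2))$. Hence multiplication by $\bm{1}_{(0,2)}$, which is the orthogonal projection onto $L^2(0,2)\subset L^2(0,\infty)$, equals $\mathcal{L}^{-1}P_{K_{e^{-2s}}}\mathcal{L}$.

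\emph{Pass to the limit.} The family $\Theta_{\eps}$ consists of finite Blaschke products, hence of inner functions, and $e^{-2s}$ is inner since $|e^{-2iy}|=1$. Proposition~\ref{prop:convergence_Blaschke_products} gives $\Theta_{\eps}(iy)\to e^{-2iy}$ for every $y$. Proposition~\ref{prop:convergence_model_spaces} is stated for sequences, so we argue along an arbitrary sequence $\eps_k\to0$. It then yields $P_{K_{\Theta_{\eps_k}}}G\to P_{K_{e^{-2s}}}G$ in $H^2$ for every $G\in H^2$. Taking $G=\mathcal{L}f$ and applying the isometry $\mathcal{L}^{-1}$ gives $\Pi_{\eps_k}f\to\bm{1}_{(0,2)}f$ in $L^2(0,\infty)$. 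Since the sequence was arbitrary, the full limit $\eps\to0$ follows.

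\emph{Main obstacle.} There is no real analytic difficulty; the one point needing care is the boundary-value identification used in Proposition~\ref{prop:convergence_model_spaces}. The radial limits of $\Theta_{\eps}$ and of $e^{-2s}$ must agree with their pointwise values on $i\R$. This holds because both functions are continuous up to the boundary: $\Theta_{\eps}$ is rational with poles at $-\lambda_n^{\eps}$, and $e^{-2s}$ is entire. With that, the hypothesis of Proposition~\ref{prop:convergence_model_spaces} is exactly the conclusion of Proposition~\ref{prop:convergence_Blaschke_products}.
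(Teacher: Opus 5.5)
Your proposal is correct and follows the paper's own proof: identify $\mathcal{E}_{\eps}=\mathcal{L}^{-1}(K_{\Theta_{\eps}})$ and $L^2(0,2)=\mathcal{L}^{-1}(K_{e^{-2s}})$, then combine Propositions~\ref{prop:Laplace_isometry}, \ref{prop:convergence_model_spaces}, and~\ref{prop:convergence_Blaschke_products}. Your extra remarks are harmless and slightly more careful than the paper: the sequential reduction, the use of \eqref{eq:projection_identity} for a unitary between two different Hilbert spaces, and the boundary-continuity check.
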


\begin{proof}
    By Propositions~\ref{prop:Laplace_transform_exp},~\ref{prop:Laplace_isometry}, and~\ref{prop:model_Theta}, we have
    \begin{equation}
        \mathcal{E}_{\eps}
        =
        \L^{-1} \left( K_{\Theta_{\eps}} \right),
    \end{equation}
    where $K_{\Theta_{\eps}}$ is the model space of the Blaschke product $\Theta_{\eps}$ defined by~\eqref{eq:Blaschke_eps}. Similarly, Proposition~\ref{prop:model_exp} implies
    \begin{equation}
        L^2(0,2)
        =
        \L^{-1} \left( K_{e^{-2s}} \right).
    \end{equation}
    Then the conclusion follows from Propositions~\ref{prop:Laplace_isometry},~\ref{prop:convergence_model_spaces}, and~\ref{prop:convergence_Blaschke_products}.
\end{proof}

Proposition~\ref{prop:convergence_projections_time_exponentials} shows that every temporal profile $g \in L^2(0,2) \subset L^2(0,\infty)$ can be approximated in $L^2(0,\infty)$ by boundary fluxes of the form~\eqref{eq:boundary_flux_spectral} with $M_{\eps}=N_{\eps}$. This approximation property plays an essential role in the proof of~\eqref{thm:main_positive_blowup} in Theorem~\ref{thm:main_positive}.

\section{Failure of uniform observability below time $2$}\label{sec:proof}
In this section, we prove the blow-up implication~\eqref{thm:main_positive_blowup} in Theorem~\ref{thm:main_positive}. Throughout this section, we take $\eta=1$ and write
\begin{equation}
    P_{\eps} \coloneqq P_{\eps}^{1} = -(\partial_x + \eps \partial_{x}^{2}), \qquad \varphi_{n}^{\eps} \coloneqq \varphi_{n}^{\eps,1} =
    \exp\left( -\frac{x}{2\eps} \right) \sin(n \pi x).
\end{equation}
By Proposition~\ref{prop:duality}, it suffices to show the following.

\begin{thm}\label{thm:counterexample_observation}
    For any $0<T<2$, there exists a positive constant $c>0$ and a family of solutions $(z^{\eps})_{\eps>0}$ to~\eqref{eq:transport-diffusion-adjoint} with $z^{\eps}(0,\cdot) \in H_{0}^{1}(0,1) \cap H^2(0,1)$ such that
    \begin{equation}\label{eq:z_lower_bound}
        \int_{0}^{1}|z^{\eps}(T,x)|^2 \, dx \geq c
    \end{equation}
    for sufficiently small $\eps>0$ and
    \begin{equation}\label{eq:z_vanishing_flux}
        \lim_{\eps \to 0}
        \int_{0}^{T}|\eps z_{x}^{\eps}(t,0)|^2 \, dt
        =
        0.
    \end{equation}
\end{thm}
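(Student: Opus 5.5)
The plan is to use Proposition~\ref{prop:convergence_projections_time_exponentials} to build adjoint solutions whose boundary flux is, in the limit, a prescribed profile supported in $(T,2)$. A uniform admissibility (resolvent) estimate then converts the nonvanishing flux after time $T$ into a lower bound for $\|z^{\eps}(T)\|$.

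\emph{Construction.} Fix $0<T<2$ and a real nonzero $g\in L^2(0,\infty)$ supported in $[T,2]$, for instance $g=\bm{1}_{(T,2)}$. Set $f_\eps\coloneqq\Pi_\eps g\in\mathcal{E}_\eps$. Since $g$ is real and the exponentials are real, $f_\eps$ is real, and we may write $f_\eps=\sum_{n=1}^{N_\eps}a_n^\eps e^{-\lambda_n^\eps t}$ with real $a_n^\eps$, using the linear independence of distinct exponentials. Take $c_n^\eps\coloneqq a_n^\eps/b_n^\eps=a_n^\eps/(\pi\eps n)$ and let $z^\eps$ be the finite sum~\eqref{eq:finite_sum_solution} with $M_\eps=N_\eps$. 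Its initial datum is a finite combination of the $\phi_n^{\eps}$, hence lies in $H_0^1\cap H^2$. By~\eqref{eq:boundary_flux_spectral}, $\eps z^\eps_x(t,0)=f_\eps(t)$ for all $t>0$. Proposition~\ref{prop:convergence_projections_time_exponentials} gives $f_\eps\to\bm{1}_{(0,2)}g=g$ in $L^2(0,\infty)$. Since $g$ vanishes on $(0,T)$, this yields~\eqref{eq:z_vanishing_flux}. It also gives $\int_T^\infty|f_\eps|^2\,dt\to\|g\|_{L^2}^2>0$.

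\emph{Lower bound at time $T$.} By the semigroup property, $t\mapsto z^\eps(T+t)$ solves~\eqref{eq:transport-diffusion-adjoint} with initial datum $w_\eps\coloneqq z^\eps(T)$, and its flux is $f_\eps(T+\cdot)$. It therefore suffices to prove a uniform infinite-time admissibility estimate: there is $C$ independent of $\eps\in(0,\eps_0)$ such that
\begin{equation}
    \int_0^\infty |\eps z_x(t,0)|^2\,dt \le C\,\|z_0\|_{L^2(0,1)}^2
\end{equation}
for all solutions of~\eqref{eq:transport-diffusion-adjoint} with $\eta=1$. Granting this, $\|z^\eps(T)\|^2\ge C^{-1}\int_T^\infty|f_\eps|^2\,dt\ge c\coloneqq\|g\|^2/(2C)$ for small $\eps$, which is~\eqref{eq:z_lower_bound}.

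\emph{The admissibility estimate (main obstacle).} I would prove it through the Laplace transform in time. By Proposition~\ref{prop:Laplace_isometry} and Plancherel, the left-hand side equals $\frac{1}{2\pi}\int_\R|\eps\,\partial_x[(P_\eps+iy)^{-1}z_0](0)|^2\,dy$, so the task is to bound the boundary-to-resolvent functional
\begin{equation}
    h\mapsto \eps\,\partial_x[(P_\eps+s)^{-1}h](0).
\end{equation}
Concretely, solve $-\eps u''-u'+su=h$ with $u(0)=u(1)=0$ via the explicit Green's function. The characteristic roots are $r_\pm=(-1\pm\sqrt{1+4\eps s})/(2\eps)$, giving one root of order $-1/\eps$ and one of order $s$. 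The value $\eps u'(0)=\int_0^1 k_s(x)h(x)\,dx$ then has an explicit kernel: essentially $k_s(x)\approx e^{-sx}$ for $|s|\ll 1/\eps$, which is the transport-equation picture, with exponentially decaying corrections otherwise. The required bound is
\begin{equation}
    \int_\R\Bigl|\int_0^1 k_{iy}(x)h(x)\,dx\Bigr|^2 dy\le C\|h\|^2,
\end{equation}
uniformly in $\eps$. For $k=e^{-iyx}$ this is just Plancherel, with constant $2\pi$. The work lies in controlling the error terms in both regimes $|y|\lesssim1/\eps$ and $|y|\gtrsim 1/\eps$; in the latter regime $\sqrt{1+4\eps iy}$ is large and the kernel becomes boundary-layer-like, with $L^2$ size giving an integrable-in-$y$ Hilbert--Schmidt contribution.

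As a cheaper alternative, I would first try a multiplier argument. Multiplying the equation by $z$ gives the dissipation identity $\frac{d}{dt}\|z\|^2=-2\eps\|z_x\|^2$. Multiplying by $(1-x)z_x$ gives, after integrating by parts, $\tfrac{\eps}{2}z_x(t,0)^2\le -\tfrac12\frac{d}{dt}\int(1-x)\ldots$ plus terms bounded by $\eps\|z_x\|^2$. Integrating in time would yield the estimate, provided the time-derivative term can be handled. If this multiplier bookkeeping fails, I fall back on the explicit resolvent kernel above.
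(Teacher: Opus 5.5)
There is a genuine gap, in the lower-bound step. Your construction is the paper's: project a profile supported in $[T,2]$ onto $\mathcal{E}_{\eps}$ and divide by $b_n^{\eps}$. It correctly gives~\eqref{eq:z_vanishing_flux}.

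The key estimate you propose, $\int_0^\infty|\eps z_x(t,0)|^2\,dt\le C\|z_0\|_{L^2(0,1)}^2$, is false. It is not merely non-uniform in $\eps$: it fails for every fixed $\eps>0$, because the Neumann trace of a Dirichlet diffusion is not an admissible observation on $L^2$.

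Take $z_0=\bm{1}_{(0,\delta)}$. For $t\ll\eps$ diffusion dominates transport near $x=0$, so $z\approx\mathrm{erf}(x/(2\sqrt{\eps t}))$ and $\eps z_x(t,0)\sim\sqrt{\eps/(\pi t)}$. This is not square integrable near $t=0$. Since $z_0$ is an $L^2$-limit of finite sums of the $\phi_n^{\eps}$, an inequality with any constant $C$ on such sums would extend by continuity and force this flux into $L^2(0,\infty)$.

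Spectrally, the quadratic form $\sum_{m,n}\pi^2\eps^2 mn\,c_mc_n/(\lambda_m^{\eps}+\lambda_n^{\eps})$ has kernel asymptotically $\eps\, mn/(m^2+n^2)$. This kernel is homogeneous of degree zero, hence unbounded on $\ell^2$.

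In your Laplace-side plan, the failure sits exactly in the regime you called harmless. For $|y|\gtrsim 1/\eps$ you have $\Re r_-(iy)\approx-\sqrt{|y|/(2\eps)}$, so $\int_0^1|k_{iy}(x)|^2\,dx\approx\tfrac12\sqrt{2\eps/|y|}$, which is not integrable in $y$. The multiplier fallback cannot close either, since any proof of the inequality would contradict the example above.

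The fix is to ask for less. You do not need a quadratic bound on the whole future flux; one positive linear functional suffices, because your profile $g=\bm{1}_{(T,2)}$ is nonnegative. The paper evaluates your boundary-to-resolvent functional at a single real point $s=\sigma>0$:
$$\int_0^\infty e^{-\sigma t}\,\eps\partial_x w(t,0)\,dt=\int_0^1 w_0\,v\,dx,$$
where $(\sigma+\partial_x-\eps\partial_x^2)v=0$, $v(0)=1$, $v(1)=0$. The maximum principle gives $0\le v\le 1$, so the left side is bounded by $\|w_0\|_{L^1}\le\|w_0\|_{L^2}$, uniformly in $\eps$ and with no oscillation issues.

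Apply this to $w_0=z^{\eps}(T)$, whose future flux is $f_\eps(T+\cdot)$. Using $f_\eps\to g$ in $L^2(0,\infty)$, you get
$$\|z^{\eps}(T)\|_{L^2}\ge\left|\int_0^\infty e^{-\sigma t}f_\eps(T+t)\,dt\right|\to\int_0^\infty e^{-\sigma t}g(T+t)\,dt>0,$$
which yields~\eqref{eq:z_lower_bound}.
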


To prove Theorem~\ref{thm:counterexample_observation}, we first need some preparations. Assume $0<T<2$ and take $\alpha<\beta$ such that
\begin{equation}
    T<\alpha<\beta<2.
\end{equation}
We may take $g \in C^{\infty}([0,\infty))$ such that
\begin{equation}
    \supp g \subset [\alpha,\beta], \qquad g \geq 0, \qquad g \not \equiv 0.
\end{equation}
Then, using Proposition~\ref{prop:convergence_projections_time_exponentials}, we can construct boundary fluxes involving finitely many modes that converge to $g$ in the limit $\eps \to 0$.

\begin{prop}\label{prop:q}
    Let
    \begin{equation}
        q_{\eps}
        \coloneqq
        \Pi_{\eps}
        g \in L^2(0,\infty),
    \end{equation}
    where $\Pi_{\eps}$ is defined in Proposition~\ref{prop:convergence_projections_time_exponentials}. We have
    \begin{equation}\label{eq:limit_q_g}
        \lim_{\eps \to 0}
        q_{\eps}
        =g
        \qquad
        \text{in $L^2(0,\infty)$}
    \end{equation}
    and
    \begin{equation}\label{eq:limit_q_boundary}
        \lim_{\eps \to 0}
        \int_{0}^{T}
        |q_{\eps}(t)|^2
        \, dt
        =0.
    \end{equation}
    Moreover, there exist real numbers $(a_{n}^{\eps})_{n=1}^{N_{\eps}}$ such that
    \begin{equation}\label{eq:q_real_coefficients}
        q_{\eps}(t)
        =
        \sum_{n=1}^{N_{\eps}}
        a_{n}^{\eps} e^{-\lambda_{n}^{\eps} t}
        \qquad
        (t \geq 0).
    \end{equation}
\end{prop}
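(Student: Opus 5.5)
The proof splits into three short steps, one for each conclusion of the proposition.

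\textbf{Step 1: the limit \eqref{eq:limit_q_g}.} I would apply Proposition~\ref{prop:convergence_projections_time_exponentials} directly with $f=g$. Since $\supp g\subset[\alpha,\beta]\subset(0,2)$, we have $\bm{1}_{(0,2)}g=g$. Hence $q_\eps=\Pi_\eps g\to g$ in $L^2(0,\infty)$.

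\textbf{Step 2: the vanishing on $(0,T)$, \eqref{eq:limit_q_boundary}.} This follows at once from Step 1. Because $T<\alpha$, $g$ vanishes on $(0,T)$, so
\begin{equation}
    \int_0^T|q_\eps(t)|^2\,dt=\int_0^T|q_\eps(t)-g(t)|^2\,dt\le\|q_\eps-g\|_{L^2(0,\infty)}^2\to0 .
\end{equation}

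\textbf{Step 3: the real representation \eqref{eq:q_real_coefficients}.} By definition $q_\eps\in\mathcal{E}_\eps$. The functions $e^{-\lambda_n^\eps t}$, $1\le n\le N_\eps$, have distinct exponents, so they are linearly independent and $\mathcal{E}_\eps$ is finite dimensional. Thus $q_\eps=\sum_n a_n^\eps e^{-\lambda_n^\eps t}$ for some complex coefficients, and it remains to show they are real.

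The key point is that $q_\eps$ is the best approximation of $g$ in $\mathcal{E}_\eps$. The coefficients are determined by the normal equations
\begin{equation}
    G^\eps a^\eps=h^\eps,\qquad G^\eps_{mn}=\frac{1}{\lambda_m^\eps+\lambda_n^\eps},\qquad h^\eps_m=\int_0^\infty g(t)e^{-\lambda_m^\eps t}\,dt .
\end{equation}
The Gram matrix $G^\eps$ is invertible (by linear independence) and real, and $h^\eps$ is real because $g$ is real valued. Hence $a^\eps=(G^\eps)^{-1}h^\eps$ is real.

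Equivalently, complex conjugation commutes with $\Pi_\eps$, because $\mathcal{E}_\eps$ is spanned by real functions. So $\overline{q_\eps}=\Pi_\eps\bar g=q_\eps$, and uniqueness of coefficients gives $a_n^\eps\in\R$.

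No step is a real obstacle. The substance is Proposition~\ref{prop:convergence_projections_time_exponentials}, which is assumed. The only points needing care are the linear independence used in Step 3 and the inclusion $\supp g\subset(T,2)$.
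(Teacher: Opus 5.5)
Your proposal is correct and follows the paper's proof essentially step for step: you apply Proposition~\ref{prop:convergence_projections_time_exponentials} with $\bm{1}_{(0,2)}g=g$, and you use that $g$ vanishes on $(0,T)$ to get \eqref{eq:limit_q_boundary}. Your normal-equations (or conjugation-commutes-with-$\Pi_\eps$) argument for real coefficients is simply a more explicit version of the paper's one-line remark that $\mathcal{E}_\eps$ is spanned by real functions and $g$ is real.
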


\begin{proof}
    By Proposition~\ref{prop:convergence_projections_time_exponentials}, we have
    \begin{equation}
        \lim_{\eps \to 0}
        q_{\eps}
        =
        \bm{1}_{(0,2)} g
        =
        g
        \qquad
        \text{in $L^2(0,\infty)$},
    \end{equation}
    which proves~\eqref{eq:limit_q_g}. On the other hand, since
    \begin{align}
        \begin{aligned}
            \| q_{\eps} \|_{L^2(0,T)}
            & = \| g + (q_{\eps} - g) \|_{L^2(0,T)} \\
            & \leq
            \| g \|_{L^2(0,T)}
            +
            \| q_{\eps} - g \|_{L^2(0,T)} \\
            & =
            \| q_{\eps} - g \|_{L^2(0,T)},
        \end{aligned}
    \end{align}
    the equality~\eqref{eq:limit_q_boundary} follows from~\eqref{eq:limit_q_g}. Finally,~\eqref{eq:q_real_coefficients} follows from the definition of the space $\mathcal{E}_{\eps}$ and the fact that both $e^{-\lambda_{n}^{\eps} t}$ and $g$ are real functions.
\end{proof}

The preceding construction makes the observed flux small on $(0,T)$. To obtain a counterexample to observability, we must also keep the interior state at time $T$ away from zero. The next lemma provides the required lower bound in terms of the future boundary flux.


\begin{lemma}\label{lemma:lower_bound_present_from_future_flux}
    For real numbers $(a_n)_{n=1}^{M}$, consider $w_0 \colon [0,1] \to \R$ given by
    \begin{equation}\label{eq:w0}
        w_0
        \coloneqq
        \sum_{n=1}^{M}
        a_n \phi_{n}^{\eps}.
    \end{equation}
    Let $w \colon [0,\infty) \times [0,1] \to \R$ be the function defined by
    \begin{equation}\label{eq:def_w}
        w
        \coloneqq
        e^{-t P_{\eps}} w_0
        =
        \sum_{n=1}^{M}
        a_n e^{-\lambda_{n}^{\eps} t}\phi_{n}^{\eps}.
    \end{equation}
    Then for any $\sigma>0$, we have
    \begin{equation}\label{eq:lower_bound_present_from_future_flux}
        \left|
            \int_{0}^{\infty}
            e^{-\sigma t}
            \eps
            \partial_x w(t,0)
            \, dt
        \right|
        \leq
        \| w_0 \|_{L^2(0,1)}.
    \end{equation}
\end{lemma}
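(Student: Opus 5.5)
The plan is to rewrite the left-hand side of~\eqref{eq:lower_bound_present_from_future_flux} as a resolvent quantity, then use a duality identity with an explicit auxiliary function, and conclude with a maximum principle.

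\textbf{Step 1: the resolvent identity.} Since $w$ is a finite sum, Proposition~\ref{prop:Laplace_transform_exp} and~\eqref{eq:boundary_coefficients} give
\begin{equation}
    \int_0^\infty e^{-\sigma t}\eps\partial_x w(t,0)\,dt
    =\sum_{n=1}^{M}\frac{a_n b_n^{\eps}}{\lambda_n^{\eps}+\sigma}
    =\eps v'(0),
    \qquad
    v\coloneqq\sum_{n=1}^{M}\frac{a_n}{\lambda_n^{\eps}+\sigma}\phi_n^{\eps}.
\end{equation}
The function $v\in H_0^1(0,1)\cap H^2(0,1)$ is the unique solution of $(P_\eps+\sigma)v=w_0$ with $v(0)=v(1)=0$, since $(P_\eps+\sigma)\phi_n^{\eps}=(\lambda_n^{\eps}+\sigma)\phi_n^{\eps}$. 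So it suffices to prove $|\eps v'(0)|\le\|w_0\|_{L^2(0,1)}$.

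\textbf{Step 2: duality with a boundary-layer function.} Let $\psi$ be the solution of the two-point boundary value problem
\begin{equation}
    -\eps\psi''+\psi'+\sigma\psi=0\quad\text{in }(0,1),\qquad \psi(0)=1,\quad \psi(1)=0.
\end{equation}
The formal adjoint of $P_\eps$ is $\partial_x-\eps\partial_x^2$, and this problem is uniquely solvable by explicit exponentials. Multiply $(P_\eps+\sigma)v=w_0$ by $\psi$ and integrate by parts twice. Using $v(0)=v(1)=0$, the only surviving boundary term is
\begin{equation}
    -\eps\bigl[v'\psi\bigr]_0^1=\eps v'(0)\psi(0)-\eps v'(1)\psi(1)=\eps v'(0),
\end{equation}
while the interior term is $\int_0^1 v\,(-\eps\psi''+\psi'+\sigma\psi)\,dx=0$. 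Therefore
\begin{equation}
    \eps v'(0)=\int_0^1 w_0\,\psi\,dx .
\end{equation}

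\textbf{Step 3: the bound $0\le\psi\le1$.} The operator $-\eps\partial_x^2+\partial_x+\sigma$ has positive zeroth-order coefficient $\sigma>0$, so the classical weak maximum principle applies. A positive interior maximum of $\psi$ is impossible, because there $\psi''\le0$ and $\psi'=0$ would give $\sigma\psi\le0$. Likewise a negative interior minimum is impossible. Hence $0\le\psi\le\max(\psi(0),\psi(1))=1$ on $[0,1]$, so $\|\psi\|_{L^2(0,1)}\le1$. Cauchy--Schwarz then yields
\begin{equation}
    |\eps v'(0)|\le\|w_0\|_{L^2(0,1)}\|\psi\|_{L^2(0,1)}\le\|w_0\|_{L^2(0,1)},
\end{equation}
which is~\eqref{eq:lower_bound_present_from_future_flux}.

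The only delicate point is Step 2: the sign conventions in the integration by parts must produce exactly $+\eps v'(0)$ with the boundary data $\psi(0)=1$, $\psi(1)=0$. Once this identity is in place, Step 3 is routine.
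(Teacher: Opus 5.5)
Your proposal is correct and follows essentially the same route as the paper: you write the Laplace transform of the flux as $\eps v'(0)$ with $v=(\sigma+P_\eps)^{-1}w_0$, pair it with the adjoint solution $\psi$ having boundary data $\psi(0)=1$, $\psi(1)=0$, apply the maximum principle to get $0\le\psi\le1$, and conclude by Cauchy--Schwarz. The only cosmetic differences are that you verify the resolvent/Laplace identity directly from the finite spectral sum, which the paper mentions as an alternative, and that you bound $\|\psi\|_{L^2}\le 1$ where the paper bounds $\int_0^1|w_0|\,dx\le\|w_0\|_{L^2(0,1)}$.
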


\begin{proof}
    Define $u \colon [0,1] \to \R$ by
    \begin{equation}
        u
        \coloneqq
        (\sigma + P_{\eps})^{-1} w_0
        =
        \int_{0}^{\infty}
        e^{-\sigma t}
        w
        \, dt.
    \end{equation}
    The second equality is a representation of the resolvent $(\sigma+P_{\eps})^{-1}$ in terms of the Laplace transform, which can also be directly verified for $w_0$ having the form~\eqref{eq:w0}. We note that $u$ solves the elliptic equation
    \begin{equation}
        \begin{dcases}
            (\sigma - \partial_x - \eps \partial_{x}^{2})
            u
            =
            w_0
            & \text{in $(0,1)$}, \\
            u(0)=u(1)=0.
        \end{dcases}
    \end{equation}
    To estimate the boundary derivative $\eps u_x(0)$, consider the adjoint boundary-value problem
    \begin{equation}
        \begin{dcases}
            (\sigma + \partial_x - \eps \partial_{x}^{2})
            v
            =
            0
            & \text{in $(0,1)$}, \\
            v(0)=1, \qquad v(1)=0.
        \end{dcases}
    \end{equation}
    Then by integration by parts, we have
    \begin{equation}
        \int_{0}^{1} w_0 v \, dx
        =
        \int_{0}^{1}
        \left\{
            (\sigma - \partial_x - \eps \partial_{x}^{2}) u
        \right\}
        v \, dx
        =
        \eps \partial_x u(0).
    \end{equation}
    By the maximum principle, we have $0 \leq v \leq 1$. Hence
    \begin{equation}
        |\eps \partial_x u(0)|
        \leq
        \int_{0}^{1} |w_0| \, dx
        \leq
        \| w_0 \|_{L^2(0,1)}.
    \end{equation}
    This proves~\eqref{eq:lower_bound_present_from_future_flux}; differentiation under the integral sign can be verified easily as $w$ is defined by the finite sum~\eqref{eq:def_w}.
\end{proof}

We now prove Theorem~\ref{thm:counterexample_observation}.

\begin{proof}[Proof of Theorem~\ref{thm:counterexample_observation}]
    Using the coefficients $(a_{n}^{\eps})_{n=1}^{N_{\eps}}$ in Proposition~\ref{prop:q}, we define
    \begin{equation}
        c_{n}^{\eps}
        \coloneqq
        \frac{a_{n}^{\eps}}{
        \pi \eps n}
        \qquad (1 \leq n \leq N_{\eps}).
    \end{equation}
    Then define $z^{\eps} \colon [0,\infty) \times [0,1] \to \R$ by
    \begin{equation}
        z^{\eps}
        \coloneqq
        \sum_{n=1}^{N_{\eps}}
        c_{n}^{\eps} e^{-\lambda_{n}^{\eps} t} \phi_{n}^{\eps}.
    \end{equation}
    As noted in Section~\ref{subsec:spectral_data}, the function $z^{\eps}$ is a solution to~\eqref{eq:transport-diffusion-adjoint} with the initial data $z_0^{\eps} = \sum_{n=1}^{N_{\eps}}c_n^{\eps} \phi_{n}^{\eps}$. Of course, $z_0^{\eps} \in H_{0}^{1}(0,1) \cap H^2(0,1)$. By definition, we have
    \begin{equation}
        \eps z_{x}^{\eps}(t,0)
        =
        q_{\eps}(t),
    \end{equation}
    where $q_{\eps}$ is defined in Proposition~\ref{prop:q}. Thus~\eqref{eq:limit_q_boundary} implies~\eqref{eq:z_vanishing_flux}.
    
    It remains to prove~\eqref{eq:z_lower_bound}. To this end, let 
    \begin{equation}\label{eq:w0_zT}
        w_0
        \coloneqq
        z^{\eps}(T,\cdot).
    \end{equation}
    Then we have
    \begin{equation}
        w
        \coloneqq
        e^{-t P_{\eps}}
        w_0
        =
        z^{\eps}(T+t,\cdot).
    \end{equation}
    Fix $\sigma>0$. Applying Lemma~\ref{lemma:lower_bound_present_from_future_flux} to $w_0$ defined by~\eqref{eq:w0_zT}, we obtain
    \begin{equation}\label{eq:z_lower_bound_pre}
        \| z^{\eps}(T,\cdot) \|_{L^2(0,1)}
        \geq
        \left|
            \int_{0}^{\infty}
            e^{-\sigma t}
            \eps
            \partial_x z^{\eps}(T+t,0)
            \, dt
        \right|
        =
        \left|
            \int_{0}^{\infty}
            e^{-\sigma t}
            q_{\eps}(T+t)
            \, dt
        \right|.
    \end{equation}
    By~\eqref{eq:limit_q_g}, the right-hand side converges as $\eps \to 0$ to
    \begin{equation}\label{eq:limiting_flux}
        \left|
            \int_{0}^{\infty}
            e^{-\sigma t}
            g(T+t)
            \, dt
        \right|
        >0.
    \end{equation}
    Combining~\eqref{eq:z_lower_bound_pre} and~\eqref{eq:limiting_flux} implies the existence of a positive constant $c>0$ such that~\eqref{eq:z_lower_bound} holds for sufficiently small $\eps>0$.
\end{proof}

We have thus proved Theorem~\ref{thm:counterexample_observation}. Combining it with Proposition~\ref{prop:duality} and the scaling identity~\eqref{eq:scaling_cost} proves the blow-up implication~\eqref{thm:main_positive_blowup} in Theorem~\ref{thm:main_positive}.

\section{Failure of uniform observability below time $2+2\sqrt{2}$}\label{sec:negative_lower}
In this section, we prove the blow-up implication~\eqref{thm:main_negative_blowup} in Theorem~\ref{thm:main_negative}. Throughout this section, we take $\eta=-1$ and write
\begin{equation}
    \varphi_{n}^{\eps}
    \coloneqq
    \varphi_{n}^{\eps,-1}
    =
    \exp
    \left(
        \frac{x}{2 \eps}
    \right)
    \sin(n \pi x).
\end{equation}

As in the previous sections, we set $N_{\eps} = \lfloor 1 / \eps^2 \rfloor$. Let $\bm{c} = (c_n)_{n=1}^{N_{\eps}}$ be real numbers and consider
\begin{equation}
    z_0^{\eps}(x)
    =
    \sum_{n=1}^{N_{\eps}}
    c_n \varphi_{n}^{\eps}(x).
\end{equation}
The solution to~\eqref{eq:transport-diffusion-adjoint} with the initial data $z_{0}^{\eps}$ is then
\begin{equation}
    z^{\eps}(t,x)
    =
    \sum_{n=1}^{N_{\eps}}
    c_n e^{-\lambda_{n}^{\eps} t} \varphi_{n}^{\eps}(x).
\end{equation}
Its scaled boundary flux is given by
\begin{equation}\label{eq:q_negative_counterexample}
    \eps \partial_x z^{\eps}(t,0)
    =
    \sum_{n=1}^{N_{\eps}}
    b_{n}^{\eps} c_n e^{-\lambda_{n}^{\eps} t},
\end{equation}
where $b_{n}^{\eps}$ is defined by~\eqref{eq:boundary_coefficients}. We start with the following simple observation.

\begin{prop}\label{prop:Gram}
    Define the Gram matrix $G=G^{\eps} \in \R^{N_{\eps} \times N_{\eps}}$ by
    \begin{equation}
        G_{mn}
        \coloneqq
        \int_{0}^{\infty}
        b_m^{\eps}
        b_n^{\eps}
        e^{-\lambda_{m}^{\eps} t}
        e^{-\lambda_{n}^{\eps} t}
        \, dt
        =
        \frac{b_m^{\eps} b_n^{\eps}}{\lambda_{m}^{\eps} + \lambda_{n}^{\eps}}
        \qquad
        (1 \leq m, n \leq N_{\eps}).
    \end{equation}
    Then we have
    \begin{equation}\label{eq:negative_flux_gram_norm}
        \int_{0}^{\infty}
        |\eps \partial_x z^{\eps}(t,0)|^2
        \, dt
        =
        \bm{c}^T
        G
        \bm{c}.
    \end{equation}
    Moreover, the inverse matrix of $G$ is given by
    \begin{equation}\label{eq:negative_flux_gram_inverse}
        \left(
            G^{-1}
        \right)_{mn}
        =
        \frac{
        4(-1)^{m+n}
        \lambda_{m}^{\eps} \lambda_{n}^{\eps}
        }
        {b_m^{\eps} b_n^{\eps} (\lambda_{m}^{\eps} + \lambda_{n}^{\eps}) d_{m,N_{\eps}}^{\eps} d_{n,N_{\eps}}^{\eps}
        }
        \qquad
        (1 \leq m, n \leq N_{\eps}),
    \end{equation}
    where
    \begin{equation}\label{eq:finite_product_d_firstdef}
        d_{n,N}^{\eps}
        \coloneqq
        \prod_{\substack{1\leq k\leq N \\ k\ne n}}
        \frac{|\lambda_k^{\eps}-\lambda_n^{\eps}|}
        {\lambda_k^{\eps}+\lambda_n^{\eps}}
        \qquad
        (N \in \mathbb{N}, \ 1 \leq n \leq N).
    \end{equation}
\end{prop}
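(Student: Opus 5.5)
The first identity~\eqref{eq:negative_flux_gram_norm} is immediate. We expand the square of the finite sum~\eqref{eq:q_negative_counterexample} and integrate term by term on $(0,\infty)$. Since $\int_0^\infty e^{-(\lambda_m^\eps+\lambda_n^\eps)t}\,dt = 1/(\lambda_m^\eps+\lambda_n^\eps)$ by Proposition~\ref{prop:Laplace_transform_exp}, this gives $\bm{c}^T G\bm{c}$ with the stated entries.

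For the inverse, we factor $G = B C B$. Here $B=\operatorname{diag}(b_n^\eps)$, and $C_{mn}=1/(\lambda_m+\lambda_n)$ is a Cauchy matrix, where we abbreviate $\lambda_n=\lambda_n^\eps$ and $N=N_\eps$. Since $b_n^\eps=\pi\eps n\neq 0$, we have $G^{-1}=B^{-1}C^{-1}B^{-1}$. It therefore suffices to show
\[
(C^{-1})_{mn}=\frac{4(-1)^{m+n}\lambda_m\lambda_n}{(\lambda_m+\lambda_n)\,d_{m,N}\,d_{n,N}} .
\]

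We intend to verify this through the classical Cauchy inverse formula. With $x_i=\lambda_i$ and $y_j=\lambda_j$ in $C_{ij}=1/(x_i+y_j)$, that formula reads
\[
(C^{-1})_{mn}=\frac{1}{\lambda_m+\lambda_n}\,\frac{\prod_k(\lambda_m+\lambda_k)\prod_k(\lambda_k+\lambda_n)}{\prod_{k\ne m}(\lambda_m-\lambda_k)\prod_{k\ne n}(\lambda_n-\lambda_k)} .
\]
To keep the argument self-contained, we would prove it by partial fractions. Fix $n$ and consider the rational function
\[
R_n(s)=\frac{\prod_{k}(s+\lambda_k)\,\cdot\,(\dots)}{(s-\lambda_n)\prod_k(\dots)}
\]
in the standard Lagrange-interpolation form. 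Evaluating the residue identity $\sum_j C_{ij}(C^{-1})_{jn}=\delta_{in}$ then amounts to saying that a rational function of degree $-1$ vanishing at $s=\lambda_i$ for $i\neq n$ has prescribed residues. Alternatively, we can check the identity directly via Proposition~\ref{prop:model_Theta} and reproducing kernels. The biorthogonal family to $\{1/(\lambda_k+s)\}$ in $K_\Theta$ is $\{\Theta(s)\,/\,\bigl((s-\lambda_n)\Theta'(\lambda_n)\bigr)\}$, up to normalization, and the Gram-inverse entries are exactly the inner products of these biorthogonal functions. Those inner products are computed by the residue theorem.

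The remaining step is to simplify the sign and the products, and we expect this bookkeeping to be the only real obstacle. Since the $\lambda_k$ are increasing, $\prod_{k\ne n}(\lambda_n-\lambda_k)=(-1)^{N-n}\prod_{k\ne n}|\lambda_k-\lambda_n|$. The two sign factors then combine to $(-1)^{2N-m-n}=(-1)^{m+n}$. Next, $\prod_k(\lambda_m+\lambda_k)=2\lambda_m\prod_{k\ne m}(\lambda_k+\lambda_m)$, and similarly for $n$; this produces the factor $4\lambda_m\lambda_n$. Each remaining ratio $\prod_{k\ne m}(\lambda_k+\lambda_m)/\prod_{k\ne m}|\lambda_k-\lambda_m|$ equals $1/d_{m,N}$. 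Substituting these gives exactly~\eqref{eq:negative_flux_gram_inverse}.
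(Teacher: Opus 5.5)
Your proposal is correct and follows the paper's proof: you factor $G$ as a diagonal matrix of the $b_n^{\eps}$, times the Cauchy matrix $1/(\lambda_m+\lambda_n)$, times the same diagonal matrix; you apply the classical Cauchy inversion formula, which the paper simply cites from Schechter; and you rewrite $\prod_{k\ne n}(\lambda_n+\lambda_k)/(\lambda_n-\lambda_k)$ as $(-1)^{N-n}/d_{n,N}^{\eps}$, which matches your sign and product bookkeeping. The partial-fraction derivation with the placeholder $R_n(s)$ is unfinished but not needed, and if you want the argument self-contained, your reproducing-kernel variant does work: $(C^{-1})_{mn}$ is the coefficient of $1/(s+\lambda_m)$ in the partial-fraction expansion of $\Theta(s)/\bigl((s-\lambda_n)\Theta'(\lambda_n)\bigr)$.
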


\begin{proof}
    Equality~\eqref{eq:negative_flux_gram_norm} is obvious from the definition of the matrix $G$. Abbreviate $N=N_\eps$, $\lambda_n=\lambda_n^\eps$, $b_n=b_n^\eps$, and $d_n=d_{n,N_{\eps}}^\eps$. With
    \begin{equation}
        D
        \coloneqq
        \operatorname{diag}(b_1,\ldots,b_N),
        \qquad
        A
        \coloneqq
        \left(
            \frac{1}{\lambda_m+\lambda_n}
        \right)_{1 \leq m, n \leq N},
    \end{equation}
    we can write $G$ in the form
    \begin{equation}
        G
        =
        D A D.
    \end{equation}
    The matrix $A$ is a Cauchy matrix, and by the Cauchy matrix inversion formula~\cite{Schechter1959}, we obtain
    \begin{equation}
        (A^{-1})_{mn}
        =
        \frac{4\lambda_m \lambda_n p_m p_n}{\lambda_m + \lambda_n};
        \qquad
        p_n
        =
        \prod_{\substack{1\leq k\leq N_{\eps} \\ k\ne n}}
        \frac{
            \lambda_n + \lambda_k
        }
        {
            \lambda_n-\lambda_k
        }.
    \end{equation}
    Note that since $\lambda_1<\cdots<\lambda_N$, we have
    \begin{equation}
        p_n
        =
        \frac{(-1)^{N-n}}{d_n}.
    \end{equation}
    Substituting this into the preceding formula and using $G^{-1}=D^{-1}A^{-1}D^{-1}$ proves~\eqref{eq:negative_flux_gram_inverse}.
\end{proof}

We next compute the infinite-product limit of $d_{n,N}^{\eps}$ as $N \to \infty$.

\begin{lemma}\label{lem:d_infinite}
    Let $d_{n,N}^{\eps}$ be the product defined by~\eqref{eq:finite_product_d_firstdef} and let
    \begin{equation}
        d_{n}^{\eps}
        \coloneqq
        \lim_{N \to \infty}
        d_{n,N}^{\eps}
        =
        \prod_{\substack{k \geq1 \\ k \ne n}}
        \frac{| \lambda_{k}^{\eps} - \lambda_{n}^{\eps} |}{\lambda_{k}^{\eps} + \lambda_{n}^{\eps}}.
    \end{equation}
    Then
    \begin{equation}\label{eq:infinite_product_d}
        d_{n,N}^{\eps} 
        \geq
        d_{n}^{\eps}
        =
        \frac{\lambda_{n}^{\eps} \omega_{n}^{\eps}}{\eps \pi^2 n^2 \sinh(\omega_n^\eps)},
    \end{equation}
    where
    \begin{equation}\label{eq:omega_boundary_interior_firstdef}
        \omega_n^{\eps}
        \coloneqq
        \sqrt{\pi^2 n^2 + \frac{1}{2\eps^2}}.
    \end{equation}
\end{lemma}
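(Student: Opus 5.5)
The inequality $d_{n,N}^{\eps}\geq d_n^{\eps}$ and the convergence of the limit are immediate. Every factor $|\lambda_k^{\eps}-\lambda_n^{\eps}|/(\lambda_k^{\eps}+\lambda_n^{\eps})$ lies in $(0,1)$ for $k\neq n$, so $N\mapsto d_{n,N}^{\eps}$ is nonincreasing and has a limit. That limit is positive because
\begin{equation}
    1-\frac{\lambda_k^{\eps}-\lambda_n^{\eps}}{\lambda_k^{\eps}+\lambda_n^{\eps}}=\frac{2\lambda_n^{\eps}}{\lambda_k^{\eps}+\lambda_n^{\eps}}=O(k^{-2})
\end{equation}
for $k>n$, which is summable. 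Hence $d_n^{\eps}=\inf_N d_{n,N}^{\eps}\leq d_{n,N}^{\eps}$. The remaining task is to evaluate the infinite product in closed form.

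Write $\lambda_k^{\eps}=\eps\pi^2(k^2+a^2)$ with $a^2\coloneqq 1/(4\eps^2\pi^2)$. This gives
\begin{equation}
    \lambda_k^{\eps}-\lambda_n^{\eps}=\eps\pi^2(k^2-n^2),\qquad
    \lambda_k^{\eps}+\lambda_n^{\eps}=\eps\pi^2(k^2+n^2+2a^2)=\eps\pi^2\Bigl(k^2+\frac{(\omega_n^{\eps})^2}{\pi^2}\Bigr),
\end{equation}
since $\pi^2(n^2+2a^2)=\pi^2n^2+1/(2\eps^2)=(\omega_n^{\eps})^2$. Set $b\coloneqq\omega_n^{\eps}/\pi$. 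Then
\begin{equation}
    d_n^{\eps}=\prod_{k\neq n}\frac{|k^2-n^2|}{k^2+b^2}.
\end{equation}
I would compute this as the limit $N\to\infty$ of the ratio of the finite numerator and denominator products, each normalized by $\prod_{k\le N} k^2$ so that both factors converge separately.

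The denominator uses the product formula
\begin{equation}
    \prod_{k\geq1}\Bigl(1+\frac{b^2}{k^2}\Bigr)=\frac{\sinh(\pi b)}{\pi b},
\end{equation}
from which the $k=n$ factor $1+b^2/n^2$ is removed. The numerator is the classical limit
\begin{equation}
    \prod_{k\neq n}\Bigl|1-\frac{n^2}{k^2}\Bigr|=\lim_{x\to n}\frac{\sin(\pi x)/(\pi x)}{1-x^2/n^2}=\frac{(-1)^{n+1}\cdot(\text{derivative})}{\cdots}=\frac{1}{2}\quad\text{in absolute value}.
\end{equation}
This follows from $\frac{d}{dx}\sin(\pi x)\big|_{x=n}=\pi(-1)^n$ and $\frac{d}{dx}(1-x^2/n^2)\big|_{x=n}=-2/n$, which give $\bigl|\frac{\pi(-1)^n}{\pi n}\cdot\frac{n}{-2}\bigr|=1/2$. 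Combining the two factors,
\begin{equation}
    d_n^{\eps}=\frac12\cdot\frac{\pi b\,(1+b^2/n^2)}{\sinh(\pi b)}=\frac{\omega_n^{\eps}\,(n^2\pi^2+(\omega_n^{\eps})^2)}{2\pi^2 n^2\sinh\omega_n^{\eps}}.
\end{equation}
It remains to check the constant. Since $\pi^2n^2+(\omega_n^{\eps})^2=2\pi^2n^2+1/(2\eps^2)=2\lambda_n^{\eps}/\eps$, this equals $\lambda_n^{\eps}\omega_n^{\eps}/(\eps\pi^2n^2\sinh\omega_n^{\eps})$, which is the claimed formula.

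The main care point is justifying that the product can be split into the ratio of the two limits. Each of the normalized partial products converges to a positive finite limit, so the quotient of the limits equals the limit of the quotients. The rest is bookkeeping of the excluded factor $k=n$ and of the normalizations, which I would double check against the case $n=1$ numerically if necessary.
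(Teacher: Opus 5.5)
Your proposal is correct and takes essentially the same approach as the paper. The inequality comes from every factor lying in $(0,1)$, and the closed form comes from dividing the sine product identity $\prod_{k\neq n}|1-n^2/k^2|=1/2$ by the $\sinh$ product identity, after removing its $k=n$ factor. Your additional details are all accurate: positivity of the limit, justification of splitting the quotient of partial products, and the final check $\pi^2n^2+(\omega_n^{\eps})^2=2\lambda_n^{\eps}/\eps$.
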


\begin{proof}
    The inequality in~\eqref{eq:infinite_product_d} follows easily noting that each factor in the product has modulus less than $1$. The infinite product representations for $\sin$ and $\sinh$ give
    \begin{equation}
        \prod_{\substack{k\geq1\\ k\ne n}}
        \left|
            1-\frac{n^2}{k^2}
        \right|
        =
        \frac{1}{2},
        \qquad
        \prod_{k=1}^{\infty}
        \left\{
            1 + \frac{(\omega_n^{\eps})^2}{\pi^2 k^2}
        \right\}
        =
        \frac{\sinh(\omega_n^{\eps})}{\omega_n^{\eps}}.
    \end{equation}
    Dividing the former by the latter and removing the $k=n$ factor from the denominator yields the equality in~\eqref{eq:infinite_product_d}.
\end{proof}

The previous lemma, together with an estimate of the truncation error, gives an upper bound on $d_{1,N_{\eps}}^{\eps}$ needed in the construction of adjoint solutions exhibiting the failure of uniform observability.

\begin{lemma}\label{lem:first_product_bound}
    Let $N_{\eps} = \lfloor 1 / \eps^2 \rfloor$. There exists a constant $C>0$ such that
    \begin{equation}\label{eq:first_product_bound}
        d_{1,N_\eps}^{\eps}
        \leq
        C
        \eps^{-3}
        \exp
        \left(
            -\frac{1}{\sqrt{2}\eps}
        \right)
    \end{equation}
    for $0 < \eps \leq 1 / 2$.
\end{lemma}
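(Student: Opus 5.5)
We would write $d_{1,N_\eps}^{\eps} = d_1^{\eps} \cdot R_\eps$, where
\begin{equation}
    R_\eps \coloneqq \prod_{k > N_\eps} \left(\frac{\lambda_k^{\eps}+\lambda_1^{\eps}}{\lambda_k^{\eps}-\lambda_1^{\eps}}\right) \geq 1
\end{equation}
is the reciprocal of the discarded tail. The claim then splits into an explicit bound on $d_1^{\eps}$ from Lemma~\ref{lem:d_infinite} and a bound showing that $R_\eps$ is harmless.

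\emph{Step 1: the infinite product.} By Lemma~\ref{lem:d_infinite},
\begin{equation}
    d_1^{\eps} = \frac{\lambda_1^{\eps}\omega_1^{\eps}}{\eps\pi^2\sinh(\omega_1^{\eps})}.
\end{equation}
For $0<\eps\le 1/2$ we have
\begin{equation}
    \lambda_1^{\eps} = \frac{1}{4\eps}+\eps\pi^2 \leq \frac{C}{\eps}, \qquad \omega_1^{\eps} = \sqrt{\pi^2 + \frac{1}{2\eps^2}} \leq \frac{C}{\eps}.
\end{equation}
Moreover $\omega_1^{\eps} \geq 1/(\sqrt{2}\eps) \geq \sqrt{2}$, so $\sinh(\omega_1^{\eps}) \geq c\, e^{\omega_1^{\eps}} \geq c\,e^{1/(\sqrt{2}\eps)}$. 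Combining these gives
\begin{equation}
    d_1^{\eps} \leq C\eps^{-3}\exp\left(-\frac{1}{\sqrt{2}\eps}\right).
\end{equation}
This already has the rate claimed in~\eqref{eq:first_product_bound}.

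\emph{Step 2: the truncation factor.} We use $\log\frac{1+x}{1-x} \le 3x$ for $0\le x\le 1/2$, applied with $x=\lambda_1^{\eps}/\lambda_k^{\eps}$, so that
\begin{equation}
    \log R_\eps \leq 3\lambda_1^{\eps}\sum_{k>N_\eps}\frac{1}{\lambda_k^{\eps}}.
\end{equation}
For $k > N_\eps \geq 1/\eps^2 - 1$ we have $\eps\pi^2k^2 \gtrsim \eps^{-3}$. This gives $\lambda_k^{\eps}\ge 2\lambda_1^{\eps}$, which ensures $x\le 1/2$. It also gives
\begin{equation}
    \sum_{k>N_\eps}\frac{1}{\lambda_k^{\eps}} \leq \sum_{k>N_\eps}\frac{1}{\eps\pi^2k^2} \leq \frac{C}{\eps N_\eps} \leq C\eps.
\end{equation}
Since $\lambda_1^{\eps} \le C/\eps$, we get $\log R_\eps \le C$, hence $R_\eps\le C$ uniformly on $(0,1/2]$. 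At $\eps$ near $1/2$, $N_\eps$ is small (e.g.\ $N_{1/2}=4$), so the inequality $\lambda_k^{\eps}\ge2\lambda_1^{\eps}$ for $k>N_\eps$ should be checked directly; alternatively, on a fixed range $\eps\in[\eps_0,1/2]$ the whole claim is trivial, since $d_{1,N}^{\eps}\le 1$ and the right-hand side of~\eqref{eq:first_product_bound} is bounded below there. Taking the product of the two steps yields~\eqref{eq:first_product_bound}.

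The main obstacle is Step 2. One must ensure that the missing tail cannot undo the exponentially small factor. The tail contributes only a bounded factor because $\lambda_1^{\eps}\sum_{k>N_\eps}1/\lambda_k^{\eps} = O(1)$, and this is exactly where the choice $N_\eps=\lfloor 1/\eps^2\rfloor$ is used.
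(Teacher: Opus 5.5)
Your proof is correct and follows the paper's argument. You factor $d_{1,N_\eps}^{\eps} = d_1^{\eps} R_\eps$, bound $d_1^{\eps}$ through the closed form of Lemma~\ref{lem:d_infinite}, and show the tail factor is $O(1)$ because $N_\eps \gtrsim \eps^{-2}$. The only difference is cosmetic: the paper writes each tail factor exactly as $1 + \bigl(2 + 1/(2\pi^2\eps^2)\bigr)/(k^2-1)$ and uses $\log(1+x) \le x$ with a telescoping sum, which avoids your auxiliary check $\lambda_k^{\eps} \ge 2\lambda_1^{\eps}$. That check in any case holds for all $\eps \le 1/2$.
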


\begin{proof}
    First, note that
    \begin{equation}
        \frac{d_{1,N_{\eps}}^{\eps}}
        {d_{1}^{\eps}}
        =
        \prod_{k>N_{\eps}}
        \frac{\lambda_{k}^{\eps} + \lambda_{1}^{\eps}}{|\lambda_{k}^{\eps} - \lambda_{1}^{\eps}|}
        =
        \prod_{k>N_{\eps}}       
        \left(
            1 + \frac{2 + 1 / (2 \pi^2 \eps^2)}{k^2-1}
        \right).
    \end{equation}
    Since $N_{\eps} \geq 1 / (2 \eps^2)$, applying the inequality $\log(1+x) \leq x$ and computing a telescopic sum, we obtain\footnote{We remark that a similar calculation can be found in~\cite{Jacob2006}.}
    \begin{align}\label{eq:first_product_tail}
        \begin{aligned}
            \log \frac{d_{1,N_{\eps}}^{\eps}}
            {d_{1}^{\eps}}
            &
            \leq
            \left(
                2 + \frac{1}{2 \pi^2 \eps^2}
            \right)
            \sum_{k>N_{\eps}}
            \frac{1}{k^2-1}
            \\
            &
            \leq
            \left(
                2 + \frac{1}{2 \pi^2 \eps^2}
            \right)
            \frac{1}{N_{\eps}}
            \\
            &
            \leq
            1 + \frac{1}{\pi^2}.
        \end{aligned}
    \end{align}
    For $0 < \eps \leq 1 / 2$, there exists $C>0$ such that
    \begin{equation*}
        \lambda_1^\eps
        \leq
        C
        \eps^{-1},
        \qquad
        \frac{1}{\sqrt{2}\eps}
        \leq
        \omega_{1}^{\eps}
        \leq
        C
        \eps^{-1},
        \qquad
        \frac{1}{\sinh(\omega_{1}^{\eps})}
        \leq
        C e^{-\omega_{1}^{\eps}}.
    \end{equation*}
    Hence Lemma~\ref{lem:d_infinite} implies
    \begin{equation}\label{eq:d1eps}
        d_{1}^{\eps}
        =
        \frac{\lambda_{1}^{\eps} \omega_{1}^{\eps}}{\eps \pi^2 \sinh(\omega_1^\eps)}
        \leq
        C \eps^{-3}
        \exp
        \left(
            -\frac{1}{\sqrt{2} \eps}
        \right).
    \end{equation}
    Combining~\eqref{eq:first_product_tail} and~\eqref{eq:d1eps} proves~\eqref{eq:first_product_bound}.
\end{proof}

We now prove the first implication~\eqref{thm:main_negative_blowup} in Theorem~\ref{thm:main_negative}. By Proposition~\ref{prop:duality}, it suffices to show the following.

\begin{thm}\label{thm:counterexample_observation_negative}
    For any $0<T<2+2\sqrt{2}$, there exists a positive constant $C>0$ and a family of solutions $(z^{\eps})_{\eps>0}$ to~\eqref{eq:transport-diffusion-adjoint} with $z^{\eps}(0,\cdot) \in H_{0}^{1}(0,1) \cap H^2(0,1)$ such that
    \begin{equation}
        \frac
        {\| z^{\eps}(T,\cdot) \|_{L^2(0,1)}^{2}}
        {\| \eps \partial_x z^{\eps}(t,0) \|_{L^2(0,T)}^{2}}
        \geq
        C \eps^9
        \exp
        \left(
            \frac{2 + 2 \sqrt{2} - T}{2 \eps}
        \right)
    \end{equation}
    for sufficiently small $\eps>0$. In particular, we have the bound
    \begin{equation}\label{eq:cost_blowup_negative}
        C_{\eps}^{-1}(T)
        =
        C_{\eps}(T,1,-1)
        \geq
        \sqrt{C}
        \eps^{9/2}
        \exp
        \left(
            \frac{2 + 2 \sqrt{2} - T}{4 \eps}
        \right).
    \end{equation}
\end{thm}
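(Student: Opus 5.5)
The plan is to take a single extremal linear combination: the one that minimizes the infinite-time boundary observation subject to a fixed first coefficient. Then show that the resulting interior state at time $T$ has a one-signed profile near $x=1$, where the weight $e^{x/(2\eps)}$ is largest. Concretely, with $N=N_\eps$ and $\bm{e}_1$ the first unit vector, I set
\begin{equation}
    \bm{c} \coloneqq G^{-1}\bm{e}_1,
    \qquad
    c_n = (G^{-1})_{n1}
    = \frac{4(-1)^{n+1}\lambda_n^\eps\lambda_1^\eps}{b_n^\eps b_1^\eps(\lambda_n^\eps+\lambda_1^\eps)\,d_{n,N}^\eps d_{1,N}^\eps},
\end{equation}
using Proposition~\ref{prop:Gram}. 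Then
\begin{equation}
    \|\eps\partial_x z^\eps(\cdot,0)\|_{L^2(0,T)}^2
    \le \bm{c}^TG\bm{c} = (G^{-1})_{11} = c_1 = \frac{2\lambda_1^\eps}{(b_1^\eps)^2 (d_{1,N}^\eps)^2}.
\end{equation}
Here I only use that the observation on $(0,T)$ is bounded by the observation on $(0,\infty)$, together with \eqref{eq:negative_flux_gram_norm}.

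For the numerator, I write $x=1-y$. Since $\sin(n\pi(1-y))=(-1)^{n+1}\sin(n\pi y)$, we get
\begin{equation}
    c_n e^{-\lambda_n^\eps T}\varphi_n^\eps(1-y)
    = \frac{4\lambda_n^\eps\lambda_1^\eps e^{-\lambda_n^\eps T}}{b_n^\eps b_1^\eps(\lambda_n^\eps+\lambda_1^\eps)d_{n,N}^\eps d_{1,N}^\eps}\, e^{(1-y)/(2\eps)}\sin(n\pi y).
\end{equation}
Every prefactor here is positive, and $\sin(n\pi y)\ge 0$ for all $n\le N$ as long as $0\le y\le 1/N$. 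So on $(1-1/N,1)$ all modes have the same sign. Hence $|z^\eps(T,x)|$ dominates the $n=1$ term there, which gives
\begin{equation}
    \|z^\eps(T,\cdot)\|_{L^2(0,1)}^2 \ge c_1^2 e^{-2\lambda_1^\eps T}\int_0^{1/N} e^{(1-y)/\eps}\sin^2(\pi y)\,dy \gtrsim c_1^2 e^{-2\lambda_1^\eps T} e^{1/\eps}\eps^6.
\end{equation}
This uses $1/N\sim\eps^2$, the bound $e^{-y/\eps}\ge e^{-\eps^{-1}/N}$ bounded below on this interval, and $\sin(\pi y)\ge 2y$.

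Taking the ratio of the two bounds, one factor $c_1$ cancels and the ratio is at least $C\eps^6 c_1 e^{-2\lambda_1^\eps T}e^{1/\eps}$. Now $2\lambda_1^\eps T = T/(2\eps)+2\pi^2\eps T$, and the last term is harmless. Lemma~\ref{lem:first_product_bound} gives $d_{1,N}^\eps\le C\eps^{-3}e^{-1/(\sqrt2\eps)}$, so
\begin{equation}
    c_1 = \frac{2\lambda_1^\eps}{\pi^2\eps^2 (d_{1,N}^\eps)^2} \ge C\eps^{-1}\eps^{-2}\eps^{6}e^{\sqrt2/\eps} = C\eps^3 e^{\sqrt2/\eps}.
\end{equation}
Altogether the ratio is at least $C\eps^9\exp\bigl((2+2\sqrt2-T)/(2\eps)\bigr)$, as claimed. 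The cost bound \eqref{eq:cost_blowup_negative} then follows from Proposition~\ref{prop:duality} by taking square roots. The initial data is a finite sum of the $\varphi_n^\eps$, so it lies in $H_0^1\cap H^2$.

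The main point of the argument, and the step to check most carefully, is the sign coherence. The alternating signs $(-1)^{n+1}$ of $G^{-1}\bm{e}_1$ exactly compensate the reflection $\sin(n\pi(1-y))=(-1)^{n+1}\sin(n\pi y)$. This is what lets me discard all modes $n\ge2$ without controlling cancellations. The price is that we must work on the thin layer $y\le 1/N_\eps\sim\eps^2$, and this produces the polynomial loss $\eps^6$. Beyond that, the estimate relies entirely on the upper bound for $d_{1,N}^\eps$ from Lemma~\ref{lem:first_product_bound}, which supplies the gain $e^{\sqrt2/\eps}$ that pushes the threshold from $2$ to $2+2\sqrt2$.
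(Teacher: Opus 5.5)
Your proposal is correct and follows the paper's proof essentially step by step. Like the paper, you take $\bm{c}=G^{-1}\bm{e}_1$, bound the flux by $(G^{-1})_{11}=c_1$, use the sign coherence of all modes in a thin layer at $x=1$ to keep only the $n=1$ term, and apply Lemma~\ref{lem:first_product_bound} to gain $e^{\sqrt{2}/\eps}$. The only difference is that your layer is $y\le 1/N_\eps$ rather than the paper's $y<\eps^2$, which is immaterial.
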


\begin{proof}
    For $0 < \eps \leq 1 / 2$, consider the vector
    \begin{equation}\label{def:c}
        \bm{c}^{\eps}
        \coloneqq
        G^{-1} \bm{e}_1^{\eps} \in \R^{N_{\eps}},
    \end{equation}
    where $(\bm{e}_{n}^{\eps})_{n=1}^{N_{\eps}}$ is the standard basis of $\R^{N_{\eps}}$. By Proposition~\ref{prop:Gram}, the $n$-th component of $\bm{c}^{\eps}$ is given by
    \begin{equation}\label{components_c}
        \bm{c}_n^{\eps}
        =
        \left(
            G^{-1}
        \right)_{n1}
        =
        \frac{
        4(-1)^{n+1}
        \lambda_{n}^{\eps} \lambda_{1}^{\eps}
        }
        {b_n^{\eps} b_1^{\eps} (\lambda_{n}^{\eps} + \lambda_{1}^{\eps}) d_{n,N_{\eps}}^{\eps} d_{1,N_{\eps}}^{\eps}
        }
        \qquad
        (1 \leq n \leq N_{\eps}).
    \end{equation}
    In particular,
    \begin{equation}\label{eq:c_sgn}
        \operatorname{sgn}\bm{c}_n^{\eps}
        =
        (-1)^{n+1}.
    \end{equation}
    Now define the family of solutions $(z^{\eps})_{\eps>0}$ to~\eqref{eq:transport-diffusion-adjoint} by
    \begin{equation}\label{eq:counterexample_data_negative}
        z^{\eps}(t,x)
        \coloneqq
        \sum_{n=1}^{N_{\eps}}
        \bm{c}_n^{\eps} e^{-\lambda_{n}^{\eps} t} \varphi_{n}^{\eps}(x)
        =
        e^{x/(2 \eps)}
        \sum_{n=1}^{N_{\eps}}
        \bm{c}_n^{\eps} e^{-\lambda_{n}^{\eps} t} \sin(n \pi x).
    \end{equation}
    Obviously, $z^{\eps}(0,\cdot) \in H_{0}^{1}(0,1) \cap H^2(0,1)$. By~\eqref{eq:negative_flux_gram_norm} and~\eqref{def:c}, we have
    \begin{equation}\label{eq:flux_upper_negative}
        \int_{0}^{\infty}
        | \eps \partial_x z^{\eps}(t,0) |^2
        \, dt
        =
        (\bm{c}^{\eps})^T G \bm{c}^{\eps}
        =
        \bm{c}_{1}^{\eps}.
    \end{equation}
    On the other hand, for $1 - \eps^2 < x < 1$ and $1 \leq n \leq N_{\eps} = \lfloor 1 / \eps^2 \rfloor$, we have from~\eqref{eq:c_sgn} that
    \begin{equation}
        \bm{c}_{n}^{\eps} \sin(n \pi x)
        =
        (-1)^{n+1}
        \bm{c}_{n}^{\eps}
        \sin(n \pi (1-x))
        >0.
    \end{equation}
    Hence all the terms on the right-hand side of~\eqref{eq:counterexample_data_negative} have the same sign. In particular, there exists a constant $C>0$ such that
    \begin{align}\label{eq:interior_norm_lower_negative}
        \begin{aligned}
            \int_{0}^{1}
            | z^{\eps}(T,x) |^2
            \, dx
            &
            \geq
            \int_{1-\eps^2}^{1}
            | z^{\eps}(T,x) |^2
            \, dx
            \\
            &
            \geq
            \int_{1-\eps^2}^{1}
            e^{x/\eps} (\bm{c}_{1}^{\eps})^2 e^{-2 \lambda_{1}^{\eps} T} \sin^2(\pi x)
            \, dx
            \\
            &
            \geq
            C \eps^6 (\bm{c}_{1}^{\eps})^2
            \exp
            \left(
                \frac{2-T}{2 \eps}
            \right).
        \end{aligned}
    \end{align}
    From~\eqref{eq:flux_upper_negative} and~\eqref{eq:interior_norm_lower_negative} together with~\eqref{components_c}, it follows that
    \begin{align}
        \frac
        {\| z^{\eps}(T,\cdot) \|_{L^2(0,1)}^{2}}
        {\| \eps \partial_x z^{\eps}(t,0) \|_{L^2(0,T)}^{2}}
        &
        \geq
        C \eps^6 |\bm{c}_{1}^{\eps}|
        \exp
        \left(
            \frac{2-T}{2 \eps}
        \right)
        \\
        &
        =
        \frac
        {2C \eps^6 \lambda_{1}^{\eps}}
        {(b_{1}^{\eps} d_{1,N_{\eps}}^{\eps})^2}
        \exp
        \left(
            \frac{2-T}{2 \eps}
        \right)
        \\
        &
        \geq
        \frac
        {C' \eps^3}
        {(d_{1,N_{\eps}}^{\eps})^2}
        \exp
        \left(
            \frac{2-T}{2 \eps}
        \right)
    \end{align}
    for some $C'>0$. Then applying Lemma~\ref{lem:first_product_bound}, we finally obtain
    \begin{equation}
        \frac
        {\| z^{\eps}(T,\cdot) \|_{L^2(0,1)}^{2}}
        {\| \eps \partial_x z^{\eps}(t,0) \|_{L^2(0,T)}^{2}}
        \geq
        C'' \eps^9
        \exp
        \left(
            \frac{2 + 2\sqrt{2} - T}{2 \eps}
        \right)
    \end{equation}
    for some $C''>0$. The blow-up bound~\eqref{eq:cost_blowup_negative} follows from this and Proposition~\ref{prop:duality}. This ends the proof.
\end{proof}

\section{Infinite-time observability and the boundary-to-interior map}\label{sec:reduction_kernel}
Having established the lower bounds for both signs, we now turn to the proof of the remaining half of Theorems~\ref{thm:main_positive} and~\ref{thm:main_negative}. In this section, we prove some essential tools used in the proof.

\subsection{Reduction to infinite-time observability}\label{sec:reduction}
We first establish that for functions in the space
\begin{equation}
    \mathcal{E}_{\eps,N}
    \coloneqq
    \Span
    \left\{
        e^{-\lambda_{n}^{\eps} t} : 1 \leq n \leq N
    \right\}
    \qquad
    (N \geq 1),
\end{equation}
the infinite-time observation can be bounded by the observation over $[0,T]$, uniformly in $\eps$ and $N$, whenever $T>2$.

\begin{lemma}\label{prop:finite-infinite}
    For every $T>2$, there exists $C_T>0$ such that, for all $\eps>0$, $N \geq 1$, and $f \in \mathcal{E}_{\eps,N}$, we have
    \begin{equation}\label{eq:finite-infinite}
        \int_{0}^{\infty} |f(t)|^2 \, dt \leq C_T \int_{0}^{T} |f(t)|^2 \, dt.
    \end{equation}
\end{lemma}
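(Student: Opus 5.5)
The plan is to pass to the Laplace side and turn \eqref{eq:finite-infinite} into a uniform angle estimate between two subspaces of $H^2$. Fix $T>2$ and write $\Theta=\Theta_{\eps,N}$ for the finite Blaschke product with zeros $\lambda_1^\eps<\dots<\lambda_N^\eps$. By Propositions~\ref{prop:Laplace_isometry}, \ref{prop:model_exp} and~\ref{prop:model_Theta}, $\L(\mathcal{E}_{\eps,N})=K_\Theta$ and $\L(L^2(T,\infty))=e^{-Ts}H^2$. Hence \eqref{eq:finite-infinite} is equivalent to the existence of $\gamma_T<1$, independent of $\eps$ and $N$, such that
\begin{equation}
  \|P_{e^{-Ts}H^2}F\|_{H^2}\le \gamma_T\|F\|_{H^2}\qquad (F\in K_\Theta).
\end{equation}
Then $C_T=(1-\gamma_T^2)^{-1}$ works. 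Equivalently, $\|P_{e^{-Ts}H^2}P_{K_\Theta}\|\le\gamma_T$.

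I would reduce this to a distance estimate in $L^\infty(i\R)$. Take $F\in K_\Theta$ and $G\in H^2$ with $\|F\|_{H^2}=\|G\|_{H^2}=1$. Then $\overline{\Theta}F\in\overline{H^2_0}$, so $F=\Theta\overline{U}$ on $i\R$ with $U\in H^2_0$ and $\|U\|=1$. This gives
\begin{equation}
  \langle F,e^{-Ts}G\rangle=\langle \overline{U},\,\overline{\Theta}e^{-Ts}G\rangle .
\end{equation}
For any $h\in H^\infty$, the product $hG$ lies in $H^2$ and is orthogonal to $\overline{U}\in\overline{H^2_0}$. Therefore
\begin{equation}
  |\langle F,e^{-Ts}G\rangle|\le \operatorname{dist}_{L^\infty(i\R)}\big(\overline{\Theta}e^{-Ts},H^\infty\big).
\end{equation}
So it suffices to find, for each $\eps$ and $N$, some $h_{\eps,N}\in H^\infty$ with
\begin{equation}
  \|\overline{\Theta}e^{-Ts}-h_{\eps,N}\|_{L^\infty(i\R)}\le\gamma_T<1 .
\end{equation}
This is the Nehari--Hankel form of the angle criterion.

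The information I would use is the phase control already present in the proof of Proposition~\ref{prop:convergence_Blaschke_products}. On $i\R$ we have $\overline{\Theta(iy)}e^{-iTy}=e^{i\psi(y)}$ with
\begin{equation}
  \psi(y)=2\sum_{n\le N}\arctan(y/\lambda_n^\eps)-Ty .
\end{equation}
Since $\sum_{n\ge1}1/\lambda_n^\eps\le\int_0^\infty dx/(1/4+\pi^2x^2)=1$, we get $0\le 2\sum\arctan(y/\lambda_n^\eps)\le 2y$ for $y\ge0$, and the function is odd. Hence $\psi(y)\le-(T-2)y$ for $y>0$, uniformly in $\eps$ and $N$. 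In other words, $\overline{\Theta}e^{-Ts}$ is a unimodular symbol whose phase decreases at least linearly with slope $T-2>0$, plus a bounded monotone correction. This is exactly the situation in which one expects the symbol to be uniformly far from $\overline{H^\infty}$ and uniformly close to $H^\infty$: for instance, $e^{-(T-2)s}$ itself has distance $0$ from $H^\infty$. The remaining task is to absorb the bounded correction
\begin{equation}
  \chi(y)=2\sum\arctan(y/\lambda_n^\eps)-2y\in[-2|y|,0]\cdot\operatorname{sgn}y .
\end{equation}

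The main obstacle is this last step: producing $h_{\eps,N}$ uniformly. What I would try first is to write $e^{i\chi}=e^{i\chi}e^{-\tilde\chi}\cdot e^{\tilde\chi}$, where $\tilde\chi$ is the harmonic conjugate, so that $\mathcal{O}=\exp(-\tilde\chi+i\chi)$ is an outer-type function. The point is to show that $\tilde\chi$ is bounded above and below uniformly, using the explicit rational structure: $\chi$ is a sum of $\arctan$ terms minus a linear term, and its conjugate is essentially a sum of $\tfrac12\log(1+y^2/\lambda_n^2)$ terms minus a compensating term. Then I would set $h=e^{-(T-2)s}\mathcal{O}\cdot c$ with a suitable constant $c$, and split the frequency range into $|y|\lesssim R$ and $|y|\gtrsim R$.
\begin{itemize}
  \item For large $|y|$, the linear decay $e^{-(T-2)\cdot}$ of the phase margin gives the gain.
  \item For small $|y|$, the boundedness of $\chi$ and of $\tilde\chi$ gives the gain.
\end{itemize}
The aim is a bound $\gamma_T<1$ depending only on $T-2$.

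If this harmonic-conjugate control fails to be uniform, the fallback is a compactness argument. I would argue by contradiction along a sequence $(\eps_k,N_k)$ with $\gamma\to1$, extract limits of the Blaschke products, which converge locally uniformly to inner functions of the form $e^{-as}B$ with $a\le2$ by the phase bound, and use Proposition~\ref{prop:convergence_model_spaces} to pass to a limiting model space contained in $K_{e^{-2s}B}$. Finally I would check that $K_{e^{-2s}B}$ meets $e^{-Ts}H^2$ at a positive angle. The delicate point in this fallback is the loss of compactness when $N_k\to\infty$ with $\eps_k\to0$ and normalized functions escape to infinity in time; the uniform bound $\sum 1/\lambda_n\le1$ is what should prevent that escape.
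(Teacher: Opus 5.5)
\textbf{There is a genuine gap: the uniform estimate is never proved.} Your reduction is correct, but it only reformulates the problem. Under the Laplace transform, \eqref{eq:finite-infinite} is equivalent to $\|P_{e^{-Ts}H^2}P_{K_{\Theta}}\|\le\gamma_T<1$ uniformly in $(\eps,N)$. Your duality bound by $\operatorname{dist}_{L^\infty(i\R)}(\overline{\Theta}e^{-Ts},H^\infty)$ is valid (in the half-plane one has $K_\Theta=H^2\cap\Theta\overline{H^2}$, with no subscript $0$). However, by Nehari's theorem that distance is exactly the norm you want to bound. So the whole content of the lemma sits in the step you call the main obstacle, and neither of your routes closes it.

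\emph{First route.} It rests on a false premise. The function $\chi(y)=2\sum_{n\le N}\arctan(y/\lambda_n^\eps)-2y$ is not bounded: for fixed $N$ it behaves like $-2y$ as $y\to+\infty$. Its harmonic conjugate is, up to sign and normalization, the boundary real part of $2\sum_{n\le N}\log(1+s/\lambda_n^\eps)-2s$, namely $\sum_{n\le N}\log\bigl(1+y^2/(\lambda_n^\eps)^2\bigr)$, which grows like $2N\log|y|$. Hence $\exp(-\tilde\chi+i\chi)$ is not bounded above and below, let alone uniformly in $N$.

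\emph{Compactness fallback.} This is not yet an argument either, for three reasons.
\begin{itemize}
\item Locally uniform limits of $\Theta_{\eps_k,N_k}$ do not by themselves give the boundary convergence that Proposition~\ref{prop:convergence_model_spaces} requires.
\item Strong convergence of projections gives only lower semicontinuity of $\|P_{e^{-Ts}H^2}P_{K_{\Theta_k}}\|$. A contradiction argument needs upper semicontinuity, which is exactly the escape-of-mass problem you mention.
\item The limiting statements (e.g.\ $N=\infty$ at fixed $\eps$) are themselves instances of the estimate you are trying to prove.
\end{itemize}

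\textbf{The missing ingredient is a Bernstein inequality in model spaces, which is what the paper uses.} For $F\in K_{\Theta_{\eps,N}}$ one has $\|F'\|_{H^2}\le\|\Theta_{\eps,N}'(i\cdot)\|_{L^\infty(\R)}\|F\|_{H^2}$ \cite{BaranovJamingKellaySpeckbacher2024}. You already computed the relevant constant, since it is precisely your phase-slope bound:
\[
|\Theta_{\eps,N}'(iy)|=2\sum_{n\le N}\frac{\lambda_n^\eps}{(\lambda_n^\eps)^2+y^2}\le2\sum_{n}\frac{1}{\lambda_n^\eps}\le2 .
\]
Since $F'=-\L[tf]$ for $f=\L^{-1}F$, the Paley--Wiener isometry gives $\int_0^\infty t^2|f|^2\,dt\le4\int_0^\infty|f|^2\,dt$. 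Hence $T^2\int_T^\infty|f|^2\,dt\le4\int_0^\infty|f|^2\,dt$. This is your angle estimate with $\gamma_T=2/T$, i.e.\ $C_T=T^2/(T^2-4)$. It comes from a time-moment bound, with no need to construct an explicit near-best approximant $h_{\eps,N}\in H^\infty$.
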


\begin{proof}
    Note that by Proposition~\ref{prop:model_Theta}, the space $\mathcal{L}(\mathcal{E}_{\eps,N})$ coincides with the model space $K_{\Theta_{\eps,N}}$ of the finite Blaschke product
    \begin{equation}
        \Theta_{\eps,N}
        \coloneqq
        \prod_{n=1}^{N}
        \frac{\lambda_{n}^{\eps}-s}{\lambda_{n}^{\eps}+s}.
    \end{equation}
    By the Bernstein-type inequality for model spaces in~\cite[Theorem~5.2]{BaranovJamingKellaySpeckbacher2024}, it holds that
    \begin{equation}\label{eq:Bernstein}
        \| F' \|_{H^2} \leq \| \Theta_{\eps,N}'(i \cdot) \|_{L^\infty(\mathbb{R})} \| F \|_{H^2}
    \end{equation}
    for $F \in K_{\Theta_{\eps,N}}$.\footnote{We note that D'yakonov~\cite{Dyakonov1994} proved a similar inequality earlier with a non-explicit constant.}~Note that an equation similar to~\eqref{eq:theta_boundary_arctan} holds for $\Theta_{\eps,N}$ as well, and we have
    \begin{equation}
        |\Theta_{\eps,N}'(iy)|
        =
        2
        \sum_{n=1}^{N}
        \frac{\lambda_{n}^{\eps}}{(\lambda_{n}^{\eps})^2 + y^2} \qquad (y \in \R).
    \end{equation}
    Hence
    \begin{equation}
        \| \Theta_{\eps,N}'(i \cdot) \|_{L^\infty(\mathbb{R})}
        =
        | \Theta_{\eps,N}'(0) |
        =
        \sum_{n=1}^{N}\frac{2}{\lambda_{n}^{\eps}}.
    \end{equation}
    The same integral comparison as in~\eqref{eq:deps_lower_upper} yields
    \begin{equation}\label{eq:theta_der_bound}
        \| \Theta_{\eps,N}'(i \cdot) \|_{L^\infty(\mathbb{R})}
        \leq
        2.
    \end{equation}
    If we write $f=\mathcal{L}^{-1}[F]$, we have $tf = -\mathcal{L}^{-1}[F']$. Therefore,~\eqref{eq:Bernstein} and~\eqref{eq:theta_der_bound} together with the isometric property in Proposition~\ref{prop:Laplace_isometry} yield
    \begin{equation}
        \int_{0}^{\infty} t^2 |f(t)|^2 \, dt
        \leq
        4
        \int_{0}^{\infty} |f(t)|^2 \, dt.
    \end{equation}
    This implies that
    \begin{equation}
        T^2 \int_{T}^{\infty} |f(t)|^2 \, dt
        \leq
        \int_{T}^{\infty} t^2 |f(t)|^2 \, dt
        \leq
        \int_{0}^{\infty} t^2 |f(t)|^2 \, dt
        \leq
        4
        \int_{0}^{\infty} |f(t)|^2 \, dt.
    \end{equation}
    Noting that $\mathcal{E}_{\eps,N} = \mathcal{L}^{-1}(K_{\Theta_{\eps,N}})$, this proves that~\eqref{eq:finite-infinite} holds with 
    \begin{equation}
        C_T
        =
        \frac{T^2}{T^2 - 4},
    \end{equation}
    which is positive since $T>2$.
\end{proof}

\subsection{Boundary-to-interior map}\label{sec:kernel}
We next study the relation between the boundary flux and the interior state. For a natural number $N$ and a sequence of complex numbers $(c_n)_{n=1}^{N}$, let
\begin{equation}
    q(t)
    =
    \sum_{n=1}^{N}
    b_{n}^{\eps} c_n e^{-\lambda_{n}^{\eps} t},
    \qquad
    z(t,x)
    =
    \sum_{n=1}^{N}
    c_n e^{-\lambda_{n}^{\eps} t} \varphi_{n}^{\eps,\eta}(x),
\end{equation}
where $\varphi_{n}^{\eps,\eta}$ and $b_{n}^{\eps}$ are defined by~\eqref{eq:phi_signed} and~\eqref{eq:boundary_coefficients}. Note that $q$ is the scaled boundary flux of $z$ at $x=0$:
\begin{equation}
    q(t) = \eps \partial_x z(t,0).
\end{equation}
For $T>0$ and $\eta = \pm 1$, we define the map
\begin{equation}
    \mathcal{R}_{\eps,N}^{\eta}(T)
    \colon
    \mathcal{E}_{\eps,N}
    \to
    L^2(0,1)
\end{equation}
by the formula
\begin{equation}
    \mathcal{R}_{\eps,N}^{\eta}(T) q
    \coloneqq
    z(T,\cdot)
    =
    \sum_{n=1}^{N}
    c_n e^{-\lambda_{n}^{\eps} T} \varphi_{n}^{\eps,\eta}.
\end{equation}
Since $b_{n}^{\eps} \neq 0$ and $(e^{-\lambda_{n}^{\eps} t})_{n=1}^{N}$ are linearly independent in $L^2(0,\infty)$, the map is well-defined. We call $\mathcal{R}_{\eps,N}^{\eta}(T)$ the boundary-to-interior map.

By definition of the operator norm, we have
\begin{equation}\label{eq:BtoI_op}
    \int_{0}^{1} |z(T,x)|^2 \, dx
    \leq
    \| \mathcal{R}_{\eps,N}^{\eta}(T) \|_{\mathrm{op}}^2
    \int_{0}^{\infty} |q(t)|^2 \, dt.
\end{equation}
Thus $\| \mathcal{R}_{\eps,N}^{\eta}(T) \|_{\mathrm{op}}$ gives the infinite-time observability constant. To obtain an infinite-time observability estimate, we use
\begin{equation}\label{eq:BtoI_op_HS}
    \| \mathcal{R}_{\eps,N}^{\eta}(T) \|_{\mathrm{op}}
    \leq
    \| \mathcal{R}_{\eps,N}^{\eta}(T) \|_{\mathrm{HS}},
\end{equation}
where $\| \mathcal{R}_{\eps,N}^{\eta}(T) \|_{\mathrm{HS}}$ is the Hilbert--Schmidt norm of $\mathcal{R}_{\eps,N}^{\eta}(T)$. To evaluate the Hilbert--Schmidt norm, we first derive a kernel representation of $\mathcal{R}_{\eps,N}^{\eta}(T)$ in the model space $K_{\Theta_{\eps,N}} = \mathcal{L}(\mathcal{E}_{\eps,N})$. More precisely, for the operator
\begin{equation}
    \widetilde{\mathcal R}_{\eps,N}^{\eta}(T)
    \coloneqq
    \mathcal{R}_{\eps,N}^{\eta}(T) \circ \mathcal{L}^{-1}
    \colon
    K_{\Theta_{\eps,N}}
    \to
    L^2(0,1),
\end{equation}
we have the following proposition.

\begin{prop}\label{prop:boundary_to_interior_kernel}
    For $(s,x) \in \C_+ \times(0,1)$, define
    \begin{equation}\label{eq:boundary_to_interior_kernel}
        h_{\eps,N,T}^{\eta}(s,x)
        \coloneqq
        \sum_{n=1}^{N}
        \frac{e^{-\lambda_n^{\eps} T} \varphi_n^{\eps,\eta}(x)}{b_n^{\eps}}
        \frac{\Theta_{\eps,N}(s)}
        {\Theta_{\eps,N}'(\lambda_n^{\eps})(s-\lambda_n^{\eps})},
    \end{equation}
    where the apparent singularities at $s=\lambda_n^{\eps}$ are removable since $\Theta_{\eps,N}(\lambda_{n}^{\eps})=0$. Then $h_{\eps,N,T}^{\eta}(\cdot,x) \in K_{\Theta_{\eps,N}}$ and
    \begin{equation}\label{eq:boundary_to_interior_kernel_representation}
        \left(
            \widetilde{\mathcal R}_{\eps,N}^{\eta}(T)F
        \right)
        (x)
        =
        \left\langle
            F,h_{\eps,N,T}^{\eta}(\cdot,x)
        \right\rangle_{H^2}
    \end{equation}
    holds for any $F\in K_{\Theta_{\eps,N}}$. Moreover,
    \begin{equation}\label{eq:boundary_to_interior_norms}
        \| \widetilde{\mathcal R}_{\eps,N}^{\eta}(T) \|_{\mathrm{op}}
        =
        \| \mathcal R_{\eps,N}^{\eta}(T) \|_{\mathrm{op}},
        \qquad
        \| \widetilde{\mathcal R}_{\eps,N}^{\eta}(T) \|_{\mathrm{HS}}
        =
        \|\mathcal R_{\eps,N}^{\eta}(T)\|_{\mathrm{HS}}.
    \end{equation}
\end{prop}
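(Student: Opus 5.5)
The plan is to identify $h_{\eps,N,T}^{\eta}(\cdot,x)$ as the expansion of a reproducing-kernel-type object in the basis of $K_{\Theta_{\eps,N}}$ that is biorthogonal to the kernels $k_n(s)\coloneqq 1/(\lambda_n^{\eps}+s)$. Abbreviate $\lambda_n=\lambda_n^{\eps}$ and $\Theta=\Theta_{\eps,N}$. Set
\begin{equation}
    g_m(s)\coloneqq \frac{\Theta(s)}{\Theta'(\lambda_m)(s-\lambda_m)}.
\end{equation}
Since $\Theta$ has simple zeros exactly at $\lambda_1,\ldots,\lambda_N$, each $g_m$ is a rational function with poles only at $s=-\lambda_k$ (the zeros of the denominator of $\Theta$), and it decays like $1/|s|$ at infinity. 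Hence $g_m$ is a linear combination of the functions $k_1,\ldots,k_N$. By Proposition~\ref{prop:model_Theta} this gives $g_m\in K_{\Theta}$, and therefore $h_{\eps,N,T}^{\eta}(\cdot,x)\in K_\Theta$. Moreover $g_m(\lambda_n)=\delta_{mn}$: for $n\ne m$ this holds because $\Theta(\lambda_n)=0$, and for $n=m$ it is the removable-singularity limit. By Proposition~\ref{prop:reproducing_kernel}, this means $\langle g_m,k_n\rangle_{H^2}=\overline{g_m(\lambda_n)}=\delta_{mn}$, so $(g_m)$ is the dual basis of $(k_n)$.

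Next, I would take $F\in K_\Theta$ and write $F=\mathcal L q$ with $q=\sum_n b_n^{\eps}c_n e^{-\lambda_n t}$. By Proposition~\ref{prop:Laplace_transform_exp}, $F=\sum_n b_n^{\eps}c_n k_n$. Biorthogonality then gives
\begin{equation}
    \langle F,g_m\rangle_{H^2}=\sum_n b_n^{\eps}c_n\,\overline{\langle g_m,k_n\rangle}=b_m^{\eps}c_m .
\end{equation}
The coefficients $e^{-\lambda_m T}\varphi_m^{\eps,\eta}(x)/b_m^{\eps}$ are real, since everything here is real. Using linearity in the first slot and conjugate-linearity in the second, I obtain
\begin{equation}
    \langle F,h(\cdot,x)\rangle_{H^2}=\sum_m \frac{e^{-\lambda_m T}\varphi_m^{\eps,\eta}(x)}{b_m^{\eps}}\,b_m^{\eps}c_m=z(T,x)=\bigl(\widetilde{\mathcal R}F\bigr)(x).
\end{equation}
This is \eqref{eq:boundary_to_interior_kernel_representation}. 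One point needs checking here: $\Theta'(\lambda_m)$ is real, because $\Theta$ is real on the real axis, so conjugating $g_m$ introduces no extra factor.

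Finally, the norm identities \eqref{eq:boundary_to_interior_norms} follow from the fact that $\mathcal L\colon \mathcal E_{\eps,N}\to K_\Theta$ is unitary, by Proposition~\ref{prop:Laplace_isometry} restricted to these subspaces. Composition with a unitary map preserves both the operator norm and the Hilbert--Schmidt norm: for the latter, compute the HS norm in an orthonormal basis $(e_j)$ of $\mathcal E_{\eps,N}$ and its image $(\mathcal Le_j)$ in $K_\Theta$.

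The only mildly delicate step is verifying that $g_m\in K_\Theta$, that is, that $g_m$ lies in the span of the $k_n$. A degree count handles this: $\Theta(s)/(s-\lambda_m)$ has numerator of degree $N-1$ and denominator $\prod_k(\lambda_k+s)$ of degree $N$. Its partial fraction decomposition therefore has no polynomial part and consists only of simple terms $1/(\lambda_k+s)$. Everything else is bookkeeping.
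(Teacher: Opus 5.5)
Your proof is correct. It reaches the kernel representation by a different, somewhat more elementary mechanism than the paper. Two parts match the paper exactly: membership $h_x\coloneqq h_{\eps,N,T}^{\eta}(\cdot,x)\in K_{\Theta_{\eps,N}}$ via partial fractions, and the norm identities via unitarity of $\mathcal{L}$.

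For the representation itself, the paper introduces the canonical conjugation $C$ on $K_\Theta$, given on $i\R$ by $(CF)(iy)=\Theta(iy)\overline{F(iy)}$. It checks that $C$ is an isometric involution and notes that $Ch_x$ is an explicit combination of the kernels $1/(\lambda_n+s)$. It then evaluates $\langle F,h_x\rangle=\langle Ch_x,CF\rangle$ using the reproducing property and $(CF)(\lambda_n)=-\Theta'(\lambda_n)b_n\overline{c_n}$.

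You instead observe that $g_m$ is the Lagrange interpolation basis at the nodes $\lambda_n$, hence biorthogonal to the kernels $k_n$. So $\langle F,g_m\rangle$ directly reads off the coefficient $b_m c_m$. This needs no conjugation operator and no boundary-value or uniqueness argument. It uses only the reality of the weights $e^{-\lambda_m T}\varphi_m^{\eps,\eta}(x)/b_m^{\eps}$. Your extra check that $\Theta'(\lambda_m)$ is real is unnecessary, since that factor sits inside $g_m$ and is absorbed by biorthogonality.

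The two routes are closely linked, since $Ck_n=-\Theta'(\lambda_n)g_n$. What the paper's route buys is the next step. Because $C$ is isometric and $Ch_x$ is a combination of kernels, $\|h_x\|_{H^2}^2$ is immediately an explicit double sum with entries $1/(\lambda_m+\lambda_n)$, which is what Proposition~\ref{prop:finite_hs_norm} uses. With your dual basis you would first need the expansion of the $g_m$ in the kernels, equivalently the inverse Cauchy matrix. That is the matrix route sketched in the Remark after that proposition.

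One harmless slip: by Proposition~\ref{prop:reproducing_kernel}, $\langle g_m,k_n\rangle_{H^2}=g_m(\lambda_n)$, without conjugation. It makes no difference here, since the value is $\delta_{mn}$.
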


\begin{proof}
    Fix $x \in (0,1)$ and abbreviate $\Theta = \Theta_{\eps,N}$ and $h_x = h_{\eps,N,T}^{\eta}(\cdot,x)$. Since $\Theta(s) / (s-\lambda_n^{\eps})$ is a proper rational function with simple poles at $-\lambda_1^{\eps},\ldots,-\lambda_N^{\eps}$, partial fractions imply that it belongs to $K_{\Theta}$. Hence $h_x \in K_{\Theta}$. To prove the kernel representation~\eqref{eq:boundary_to_interior_kernel_representation}, we use the canonical conjugation on $K_{\Theta}$, which is the conjugate-linear map
    \begin{equation}
        C
        \colon
        K_{\Theta}
        \to
        K_{\Theta}
    \end{equation}
    defined for $F(s) = \sum_{n=1}^{N} a_n / (s+\lambda_n^{\eps}) \in K_{\Theta}$ by
    \begin{equation}
        (CF)(s)
        =
        \Theta(s)
        \sum_{n=1}^{N} \frac{\overline{a_n}}{\lambda_{n}^{\eps} - s}.
    \end{equation}
    The same partial-fraction argument shows that $CF \in K_\Theta$. On the boundary $i \R = \partial \C_+$, we have
    \begin{equation}
        (CF)(iy)
        =
        \Theta(iy)
        \widebar{F(iy)}.
    \end{equation}
    Since $|\Theta(iy)|=1$, we have
    \begin{equation}
        (C^2 F)(iy) = F(iy), \qquad \| CF \|_{H^2} = \| F \|_{H^2}.
    \end{equation}
    Note that by Proposition~\ref{prop:boundary_trace}, a function in $H^2$ is uniquely defined by the boundary values. Hence the map $C$ is an isometric involution, that is, isometric with $C^2 F = F$. In particular,
    \begin{equation}\label{eq:involutive_identity}
        \left\langle
            F,h_x
        \right\rangle_{H^2}
        =
        \left\langle
            Ch_x,CF
        \right\rangle_{H^2}.
    \end{equation}

    Now let
    \begin{equation}
        F(s)
        =
        \sum_{n=1}^{N}
        \frac{b_n^{\eps}c_n}{s+\lambda_n^{\eps}}
        \qquad
        (c_n \in \C)
    \end{equation}
    and
    \begin{equation}
        q(t)
        =
        \mathcal{L}^{-1}[F](t)
        =
        \sum_{n=1}^{N} b_{n}^{\eps} c_n e^{-\lambda_{n}^{\eps} t}.
    \end{equation}
    Evaluation of $CF$ at the simple zeros $s = \lambda_{n}^{\eps}$ of $\Theta$ yields
    \begin{equation}\label{eq:conjugation_coefficient_recovery}
        (CF)(\lambda_n^{\eps})
        =
        -
        \Theta'(\lambda_n^{\eps})
        b_n^{\eps}
        \overline{c_n}.
    \end{equation}
    On the other hand, by~\eqref{eq:boundary_to_interior_kernel}, we have
    \begin{equation}\label{eq:Ch_x}
        (Ch_x)(iy)
        =
        -
        \sum_{n=1}^{N}
        \frac{
            e^{-\lambda_n^{\eps} T}
            \varphi_n^{\eps,\eta}(x)
        }
        {
            b_n^{\eps} \Theta'(\lambda_n^{\eps})
        }
        \frac{1}{\lambda_n^{\eps} + iy}.
    \end{equation}
    Hence Proposition~\ref{prop:reproducing_kernel} and~\eqref{eq:involutive_identity} imply
    \begin{align*}
        \left\langle
            F,h_x
        \right\rangle_{H^2}
        &
        =
        \left\langle
            Ch_x,CF
        \right\rangle_{H^2}
        \\
        &
        =
        -
        \sum_{n=1}^{N}
        \frac{
            e^{-\lambda_n^{\eps} T}
            \varphi_n^{\eps,\eta}(x)
        }
        {
            b_n^{\eps}
            \Theta'(\lambda_n^{\eps})
        }
        \overline{(CF)(\lambda_n^{\eps})}
        \\
        &
        =
        \sum_{n=1}^{N}
        c_n
        e^{-\lambda_n^{\eps} T}
        \varphi_n^{\eps,\eta}(x)
        \\
        &
        =
        \left(
            \mathcal R_{\eps,N}^{\eta}(T) q
        \right)(x)
        =
        \left(
            \widetilde{\mathcal R}_{\eps,N}^{\eta}(T)F
        \right)(x).
    \end{align*}
    This proves~\eqref{eq:boundary_to_interior_kernel_representation}. Finally, note that $\mathcal L^{-1} \colon K_{\Theta_{\eps,N}} \to \mathcal E_{\eps,N}$ is unitary by Proposition~\ref{prop:Laplace_isometry}. Composition with this map preserves both the operator norm and the Hilbert--Schmidt norm, proving~\eqref{eq:boundary_to_interior_norms}.
\end{proof}

We next compute the Hilbert--Schmidt norm $\| \mathcal{R}_{\eps,N}^{\eta}(T) \|_{\mathrm{HS}}$ using the kernel representation in Proposition~\ref{prop:boundary_to_interior_kernel}. Using the products $d_{n,N}^{\eps}$ defined by~\eqref{eq:finite_product_d_firstdef}, set
\begin{equation}\label{eq:finite_boundary_interior_kernel}
    H_{\eps,N}(t,y)
    \coloneqq
    \sum_{n=1}^{N}
    \frac{\lambda_n^{\eps}}
    {\eps \pi n d_{n,N}^{\eps}}
    e^{-\eps \pi^2 n^2 t}
    \sin(n\pi y)
    \qquad
    (t>0,\ 0<y<1).
\end{equation}

\begin{prop}\label{prop:finite_hs_norm}
    We have
    \begin{equation}\label{eq:finite_hs_norm}
        \|
            \mathcal R_{\eps,N}^{\eta}(T)
        \|_{\mathrm{HS}}^2
        =
        4
        \int_T^{\infty}
        e^{-t/(2\eps)}
        \int_0^1
        e^{-\eta x/\eps}
        \left|
            H_{\eps,N}(t,1-x)
        \right|^2
        \, dx \, dt.
    \end{equation}
\end{prop}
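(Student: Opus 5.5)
The plan is to compute the Hilbert--Schmidt norm of the model-space realization $\widetilde{\mathcal R}_{\eps,N}^{\eta}(T)$ and then transfer it to $\mathcal R_{\eps,N}^{\eta}(T)$ using~\eqref{eq:boundary_to_interior_norms}. Since Proposition~\ref{prop:boundary_to_interior_kernel} gives the kernel representation $(\widetilde{\mathcal R}F)(x)=\langle F,h_x\rangle_{H^2}$ on the finite-dimensional space $K_{\Theta_{\eps,N}}$, we take an orthonormal basis $(e_j)$ of $K_{\Theta_{\eps,N}}$. Then
\begin{equation}
\|\widetilde{\mathcal R}\|_{\mathrm{HS}}^2=\sum_j\int_0^1|\langle e_j,h_x\rangle|^2\,dx=\int_0^1\|h_x\|_{H^2}^2\,dx,
\end{equation}
by Parseval, because $h_x\in K_{\Theta_{\eps,N}}$. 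The interchange of the finite sum and the integral is trivial.

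Next, to compute $\|h_x\|_{H^2}$, we use the isometric involution $C$ from the proof of Proposition~\ref{prop:boundary_to_interior_kernel}, which gives $\|h_x\|_{H^2}=\|Ch_x\|_{H^2}$. By~\eqref{eq:Ch_x}, $Ch_x$ is the Laplace transform of
\begin{equation}
-\sum_{n=1}^N\frac{e^{-\lambda_n^\eps T}\varphi_n^{\eps,\eta}(x)}{b_n^\eps\Theta'(\lambda_n^\eps)}e^{-\lambda_n^\eps t}.
\end{equation}
Proposition~\ref{prop:Laplace_isometry} then turns $\|Ch_x\|^2$ into the $L^2(0,\infty)$ norm of this function. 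After the shift $t\mapsto t-T$, this becomes an integral over $t\in(T,\infty)$ of $\bigl|\sum_n \varphi_n^{\eps,\eta}(x)e^{-\lambda_n^\eps t}/(b_n^\eps\Theta'(\lambda_n^\eps))\bigr|^2$.

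The key computation is $\Theta'(\lambda_n^\eps)$. Differentiating the product at its simple zero, only the $n$-th factor contributes, and its derivative at $s=\lambda_n$ is $-1/(2\lambda_n)$. The remaining factors give $\prod_{k\ne n}(\lambda_k-\lambda_n)/(\lambda_k+\lambda_n)=(-1)^{n-1}d_{n,N}^\eps$, since exactly $n-1$ of them are negative. Hence
\begin{equation}
\frac{1}{b_n^\eps\Theta'(\lambda_n^\eps)}=-\frac{2(-1)^{n-1}\lambda_n^\eps}{\eps\pi n\,d_{n,N}^\eps}.
\end{equation}
We then insert $e^{-\lambda_n^\eps t}=e^{-t/(4\eps)}e^{-\eps\pi^2n^2t}$ and $\varphi_n^{\eps,\eta}(x)=e^{-\eta x/(2\eps)}\sin(n\pi x)$, and use $(-1)^{n-1}\sin(n\pi x)=\sin(n\pi(1-x))$. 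The sum collapses to
\begin{equation}
-2e^{-t/(4\eps)}e^{-\eta x/(2\eps)}H_{\eps,N}(t,1-x).
\end{equation}
Squaring its modulus and integrating over $t>T$ and $x\in(0,1)$ yields exactly~\eqref{eq:finite_hs_norm}.

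The only step I expect to need care is this sign and factor bookkeeping: the sign pattern $(-1)^{n-1}$ in $\Theta'(\lambda_n^\eps)$ must combine with the reflection $x\mapsto 1-x$, and the factor $2$ from the derivative produces the constant $4$. Everything else consists of finite sums, so Fubini and the differentiation of the product are routine.
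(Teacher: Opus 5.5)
Your proposal is correct and follows the paper's proof essentially step for step. You use Parseval in $K_{\Theta_{\eps,N}}$ to get $\int_0^1\|h_x\|_{H^2}^2\,dx$, then the isometric involution $C$ with \eqref{eq:Ch_x} to write $\|h_x\|_{H^2}^2$ as a time integral over $(T,\infty)$, and finally $\Theta'(\lambda_n^\eps)=(-1)^n d_{n,N}^\eps/(2\lambda_n^\eps)$ together with $\sin(n\pi(1-x))=(-1)^{n+1}\sin(n\pi x)$. The only cosmetic difference is how you reach that time integral: you go directly through the Laplace isometry and a shift in $t$, whereas the paper first expands $\|Ch_x\|_{H^2}^2$ via the reproducing kernel as a double sum $\sum_{m,n}(\cdots)/(\lambda_m^\eps+\lambda_n^\eps)$ and then recognizes it as the same integral.
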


\begin{proof}
    For $x \in (0,1)$, we abbreviate $h_x = h_{\eps,N,T}^{\eta}(\cdot,x)$ and
    $\Theta = \Theta_{\eps,N}$. Let $(e_j)_{j=1}^N$ be an orthonormal
    basis of $K_{\Theta}$. By
    Proposition~\ref{prop:boundary_to_interior_kernel} and Parseval's identity,
    \begin{equation}
        \|
            \mathcal R_{\eps,N}^{\eta}(T)
        \|_{\mathrm{HS}}^2
        =
        \|
            \widetilde{\mathcal R}_{\eps,N}^{\eta}(T)
        \|_{\mathrm{HS}}^2
        =
        \sum_{j=1}^N
        \int_0^1
        \left|
            \left\langle e_j,h_x \right\rangle_{H^2}
        \right|^2
        \, dx
        =
        \int_0^1
        \| h_x \|_{H^2}^2
        \, dx.
    \end{equation}
    By Proposition~\ref{prop:reproducing_kernel} and~\eqref{eq:Ch_x}, the integrand on the right-hand side can be computed as
    \begin{align}
        \| h_x \|_{H^2}^{2}
        =
        \| C h_x \|_{H^2}^{2}
        &
        =
        \sum_{m,n=1}^{N}
        \frac{
            e^{-\left( \lambda_{m}^{\eps} + \lambda_{n}^{\eps} \right) T}
            \varphi_{m}^{\eps,\eta}(x) \varphi_{n}^{\eps,\eta}(x)
        }
        {
            b_{m}^{\eps} b_{n}^{\eps}
            \Theta'(\lambda_{m}^{\eps})
            \Theta'(\lambda_{n}^{\eps})
            (\lambda_{m}^{\eps} + \lambda_{n}^{\eps})
        }
        \\
        &
        =
        \int_{T}^{\infty}
        \left|
        \sum_{n=1}^{N}
        \frac{
            e^{-\lambda_{n}^{\eps} t}
            \varphi_{n}^{\eps,\eta}(x)
        }
        {
            b_{n}^{\eps}
            \Theta'(\lambda_{n}^{\eps})
        }
        \right|^2
        \, dt.
    \end{align}
    Hence
    \begin{equation}\label{eq:hs_norm_spectral_kernel}
        \|
            \mathcal R_{\eps,N}^{\eta}(T)
        \|_{\mathrm{HS}}^2
        =
        \int_T^{\infty}
        \int_0^1
        \left|
        \sum_{n=1}^{N}
        \frac{
            e^{-\lambda_{n}^{\eps} t}
            \varphi_{n}^{\eps,\eta}(x)
        }
        {
            b_{n}^{\eps}
            \Theta'(\lambda_{n}^{\eps})
        }
        \right|^2
        \, dx \, dt.
    \end{equation}
    Note that
    \begin{equation}\label{eq:finite_blaschke_derivative_d}
        \Theta'(\lambda_n^{\eps})
        =
        \frac{(-1)^n}{2\lambda_n^{\eps}}
        d_{n,N}^{\eps}
    \end{equation}
    and
    \begin{equation*}
        \sin(n\pi(1-x))=(-1)^{n+1}\sin(n\pi x).
    \end{equation*}
    Then plugging in the formulas for $\lambda_n^{\eps}$,
    $\varphi_n^{\eps,\eta}$, and $b_n^{\eps}$ gives
    \begin{equation}
        \sum_{n=1}^N
        \frac{
            e^{-\lambda_n^{\eps} t}
            \varphi_n^{\eps,\eta}(x)
        }
        {
            b_n^{\eps}
            \Theta'(\lambda_n^{\eps})
        }
        =
        -2 e^{-t/(4\eps)} e^{-\eta x/(2\eps)}
        H_{\eps,N}(t,1-x).
    \end{equation}
    Substituting this into~\eqref{eq:hs_norm_spectral_kernel} proves~\eqref{eq:finite_hs_norm}.
\end{proof}

\begin{remark}
    The Hilbert--Schmidt norm formula in Proposition~\ref{prop:finite_hs_norm} can also be derived by matrix calculations. Let $G$ and $B$ be the Gram matrices of
    \begin{equation}
        \left( b_n^\eps e^{-\lambda_n^\eps t} \right)_{n=1}^N
        \subset
        L^2(0,\infty),
        \qquad
        \left(
            e^{-\lambda_n^\eps T} \varphi_n^{\eps,\eta}
        \right)_{n=1}^N
        \subset
        L^2(0,1),
    \end{equation}
    respectively. Namely,
    \begin{equation}
        G_{mn}
        =
        \frac
        {b_m^\eps b_n^\eps}
        {\lambda_m^\eps + \lambda_n^\eps},
        \qquad
        B_{mn}
        =
        e^{-(\lambda_m^\eps + \lambda_n^\eps) T}
        \int_0^1
        \varphi_m^{\eps,\eta}(x)
        \varphi_n^{\eps,\eta}(x)
        \,
        dx.
    \end{equation}
    We note that the Gram matrix $G$ already appeared in Proposition~\ref{prop:Gram}, with $N_{\eps}$ replaced by $N$. For $\bm{c} = (c_n)_{n=1}^{N}$, let $q = \sum_{n=1}^N b_n^\eps c_n e^{-\lambda_n^\eps t}$. Then we have
    \begin{equation}
        \| q \|_{L^2(0,\infty)}^2
        =
        \bm c^* G \bm c,
        \qquad
        \|
            \mathcal R_{\eps,N}^{\eta}(T) q
        \|_{L^2(0,1)}^2
        =
        \bm c^* B \bm c.
    \end{equation}
    Hence the square of the operator norm of the boundary-to-interior map $\mathcal{R}_{\eps,N}^{\eta}(T)$ is given by the maximum of the generalized Rayleigh quotient:
    \begin{equation}
        \|\mathcal
            R_{\eps,N}^{\eta}(T)
        \|_{\mathrm{op}}^2
        =
        \sup_{\bm c \in \C^N \setminus \{ 0 \}}
        \frac{\bm c^* B \bm c}{\bm c^* G \bm c}
        =
        \sup_{\bm c \in \C^N \setminus \{ 0 \}}
        \frac{\bm c^* G^{-1/2} B G^{-1/2} \bm c}{\bm c^* \bm c}.
    \end{equation}
    The right-hand side gives the largest eigenvalue of the matrix $G^{-1/2} B G^{-1/2}$, which can be bounded by its trace. Using the inverse matrix formula in Proposition~\ref{prop:Gram}, after some calculations, we obtain
    \begin{align}
        \operatorname{tr}
        \left(
            G^{-1/2} B G^{-1/2}
        \right)
        &
        =
        \operatorname{tr}
        \left(
            G^{-1} B
        \right)
        \\
        &
        =
        4
        \int_T^{\infty}
        e^{-t/(2\eps)}
        \int_0^1
        e^{-\eta x/\eps}
        \left|
            H_{\eps,N}(t,1-x)
        \right|^2
        \, dx \, dt.
    \end{align}
    The integral representation is identical to that of $\| \mathcal R_{\eps,N}^{\eta}(T) \|_{\mathrm{HS}}^{2}$ in Proposition~\ref{prop:finite_hs_norm}. Indeed, in the basis $(b_n^\eps e^{-\lambda_n^\eps t})_{n=1}^N$, the operator
    $
    \mathcal R_{\eps,N}^{\eta}(T)^*
    \mathcal R_{\eps,N}^{\eta}(T)
    $
    is represented by $G^{-1}B$. Hence
    \begin{equation}
        \|
            \mathcal R_{\eps,N}^{\eta}(T)
        \|_{\mathrm{HS}}^2
        =
        \operatorname{tr}(G^{-1}B).
    \end{equation}
    This gives another proof of Proposition~\ref{prop:finite_hs_norm}.
\end{remark}

We now pass to the limit $N\to\infty$. Recall that
\begin{equation}\label{eq:omega_boundary_interior}
    \omega_n^{\eps}
    =
    \sqrt{\pi^2 n^2 + \frac{1}{2\eps^2}}
\end{equation}
and define
\begin{equation}\label{eq:infinite_boundary_interior_kernel}
    H_{\eps}(t,y)
    \coloneqq
    \sum_{n=1}^{\infty}
    \frac{\pi n}{\omega_n^{\eps}}
    \sinh(\omega_n^{\eps})
    e^{-\eps \pi^2 n^2 t}
    \sin(n\pi y)
    \qquad
    (t>0,\ 0<y<1).
\end{equation}

\begin{prop}\label{prop:limit_hs_norm}
    Set
    \begin{equation}\label{eq:infinite_hs_bound}
        A_{\eps}^{\eta}(T)
        \coloneqq
        4
        \int_{T}^{\infty}
        e^{-t/(2 \eps)}
        \int_{0}^{1}
        e^{-\eta x / \eps}
        \left|
            H_{\eps}(t,1-x)
        \right|^2
        \, dx \, dt.
    \end{equation}
    Then $A_{\eps}^{\eta}(T)<\infty$ and
    \begin{equation}\label{eq:HS_limit}
        A_{\eps}^{\eta}(T)
        =
        \lim_{N \to \infty}
        \| \mathcal{R}_{\eps,N}^{\eta}(T) \|_{\mathrm{HS}}^2
        =
        \sup_{N \geq 1}
        \| \mathcal{R}_{\eps,N}^{\eta}(T) \|_{\mathrm{HS}}^2.
    \end{equation}
\end{prop}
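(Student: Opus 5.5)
The plan is to prove three things in turn: that the Hilbert--Schmidt norms $\|\mathcal R_{\eps,N}^{\eta}(T)\|_{\mathrm{HS}}$ are nondecreasing in $N$, that $H_{\eps,N}(t,y)\to H_\eps(t,y)$ pointwise under a single summable majorant, and that dominated convergence in the formula of Proposition~\ref{prop:finite_hs_norm} then gives both the limit and the finiteness of $A_\eps^\eta(T)$.

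\emph{Monotonicity in $N$.} We have $\mathcal E_{\eps,N}\subset\mathcal E_{\eps,N+1}$. The map $\mathcal R_{\eps,N+1}^{\eta}(T)$ restricted to $\mathcal E_{\eps,N}$ coincides with $\mathcal R_{\eps,N}^{\eta}(T)$, because the coefficients $c_n$ of $q$ are uniquely determined and are the same in both spaces. Now choose an orthonormal basis of $\mathcal E_{\eps,N}$ and extend it by one vector to an orthonormal basis of $\mathcal E_{\eps,N+1}$. The squared Hilbert--Schmidt norm is the sum of $\|\mathcal R e_j\|^2$ over the basis, so
\[
\|\mathcal R_{\eps,N+1}^{\eta}(T)\|_{\mathrm{HS}}^2\geq\|\mathcal R_{\eps,N}^{\eta}(T)\|_{\mathrm{HS}}^2 .
\]
Hence the limit as $N\to\infty$ exists in $[0,\infty]$ and equals the supremum. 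It remains to identify this limit with $A_\eps^\eta(T)$ and to show that it is finite.

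\emph{Convergence of the kernel.} Lemma~\ref{lem:d_infinite} gives $d_{n,N}^\eps\geq d_n^\eps$ and $d_{n,N}^\eps\to d_n^\eps$ as $N\to\infty$. Substituting the closed form of $d_n^\eps$, the $n$-th coefficient of $H_{\eps,N}$ satisfies
\[
0<\frac{\lambda_n^\eps}{\eps\pi n\, d_{n,N}^\eps}\leq \frac{\lambda_n^\eps}{\eps\pi n\, d_{n}^\eps}=\frac{\pi n\sinh(\omega_n^\eps)}{\omega_n^\eps},
\]
and the left-hand side converges to the right-hand side as $N\to\infty$. The right-hand side is exactly the $n$-th coefficient of $H_\eps$ in~\eqref{eq:infinite_boundary_interior_kernel}. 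Therefore, for $t\geq T$ and $y\in(0,1)$,
\[
|H_{\eps,N}(t,y)|\leq S_\eps(T)\coloneqq\sum_{n\geq1}\frac{\pi n\sinh(\omega_n^\eps)}{\omega_n^\eps}e^{-\eps\pi^2n^2T}.
\]
This majorant is finite: $\sinh(\omega_n^\eps)\leq e^{\pi n+1/(\sqrt2\eps)}$ grows only exponentially in $n$, while $e^{-\eps\pi^2 n^2 T}$ decays like a Gaussian since $T>0$. Termwise convergence together with the summable majorant (dominated convergence for series) gives $H_{\eps,N}(t,y)\to H_\eps(t,y)$ for every $t\geq T$ and $y\in(0,1)$.

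\emph{Passing to the limit.} The integrand in~\eqref{eq:finite_hs_norm} is bounded, uniformly in $N$, by
\[
4e^{-t/(2\eps)}e^{|x|/\eps}S_\eps(T)^2 ,
\]
which is integrable over $(T,\infty)\times(0,1)$. Dominated convergence then yields that $\|\mathcal R_{\eps,N}^{\eta}(T)\|_{\mathrm{HS}}^2\to A_\eps^\eta(T)$, and the same majorant shows $A_\eps^\eta(T)<\infty$. Combined with the monotonicity step, this gives~\eqref{eq:HS_limit}.

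The only point needing care is the uniform majorant. It relies on the inequality $d_{n,N}^\eps\geq d_n^\eps$, which holds because every factor of the infinite product has modulus less than $1$. This ensures that the finite-$N$ coefficients never exceed the limiting ones, so a single bound serves for all $N$.
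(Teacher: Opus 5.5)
Your proposal is correct and follows essentially the same route as the paper: the same coefficient bound from Lemma~\ref{lem:d_infinite}, the same Gaussian-dominated majorant for $H_{\eps,N}$ and $H_\eps$ on $t\geq T$, dominated convergence in the formula of Proposition~\ref{prop:finite_hs_norm}, and monotonicity in $N$ from the restriction identity $\mathcal R_{\eps,N+1}^{\eta}(T)|_{\mathcal E_{\eps,N}}=\mathcal R_{\eps,N}^{\eta}(T)$. Your orthonormal-basis extension just spells out the step the paper leaves implicit.
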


\begin{proof}
    By Lemma~\ref{lem:d_infinite}, we have
    \begin{equation}\label{eq:bound_dn}
        0
        <
        \frac{\lambda_{n}^{\eps}}{\eps \pi n d_{n,N}^{\eps}}
        \leq
        \lim_{N \to \infty}
        \left(
            \frac{\lambda_{n}^{\eps}}{\eps \pi n d_{n,N}^{\eps}}
        \right)
        =
        \frac{\pi n}{\omega_{n}^{\eps}}
        \sinh(\omega_{n}^{\eps}).
    \end{equation}
    Using the inequality $\sqrt{a+b} \leq \sqrt{a} + \sqrt{b}$ for $a, b \geq 0$, we obtain
    \begin{equation}\label{eq:elementary_sqrt_bound}
        \pi n \leq \omega_{n}^{\eps} \leq \pi n + \frac{1}{\sqrt{2} \eps}.
    \end{equation}
    From~\eqref{eq:bound_dn} and~\eqref{eq:elementary_sqrt_bound}, it follows that there exists a constant $C_{\eps}>0$ depending only on $\eps>0$ such that
    \begin{equation}\label{eq:HepsN_Heps_summable_bound}
        |H_{\eps,N}(t,y)| + |H_{\eps}(t,y)|
        \leq
        C_{\eps}
        \sum_{n=1}^{\infty}e^{\pi n - \eps \pi^2 n^2 T}
        \eqqcolon
        B_{\eps,T}
        <
        \infty
    \end{equation}
    for all $t \geq T$, $y \in (0,1)$, and every natural number $N$. Hence the convergence of the coefficients and dominated convergence theorem imply
    \begin{equation}
        \lim_{N \to \infty}
        H_{\eps,N}(t,y)
        =
        H_{\eps}(t,y)
        \qquad
        (t \geq T, \ 0<y<1).
    \end{equation}
    This pointwise convergence and~\eqref{eq:HepsN_Heps_summable_bound} allow us to apply the dominated convergence theorem to the integral representation in Proposition~\ref{prop:finite_hs_norm} to obtain
    \begin{equation}
        \lim_{N \to \infty}
        \|
            \mathcal{R}_{\eps,N}^{\eta}(T)
        \|_{\mathrm{HS}}^{2}
        =
        A_{\eps}^{\eta}(T)
        <
        \infty.
    \end{equation}
    Finally, note that by the inclusion $\mathcal{E}_{\eps,N} \subset \mathcal{E}_{\eps,N+1}$ and
    \begin{equation}
        \mathcal{R}_{\eps,N+1}^{\eta}(T)|_{\mathcal{E}_{\eps,N}}
        =
        \mathcal{R}_{\eps,N}^{\eta}(T),
    \end{equation}
    we have
    \begin{equation}
        \| 
            \mathcal{R}_{\eps,N}^{\eta}(T)
        \|_{\mathrm{HS}}^{2}
        \leq
        \| 
            \mathcal{R}_{\eps,N+1}^{\eta}(T)
        \|_{\mathrm{HS}}^{2}.
    \end{equation}
    This implies the second equality in~\eqref{eq:HS_limit}, completing the proof.
\end{proof}

\subsection{Localization estimates for $H_{\eps}$}\label{sec:localization}
In this section, we estimate the auxiliary function $H_{\eps}$ defined by~\eqref{eq:infinite_boundary_interior_kernel}. As it is expressed as a Fourier series with a smooth and rapidly decaying filter, we expect that it is concentrated near the origin; see for example~\cite[Section~2.2]{FilbirMhaskarPrestin2012}. In fact, we have the following proposition.

\begin{prop}\label{lem:H_localization}
Let $T>2$. Fix $0<a<1/2$ and set $q_a = \sqrt{1/2-a^2}$. There exists a constant $C_{T,a}>0$ such that
\begin{equation}\label{eq:H_localization}
    |H_{\eps}(t,y)|
    \leq
    C_{T,a}
    \eps^{-1/2}
    \exp
    \left(
        \frac{q_a + t a^2 - a y}{\eps}
    \right)
\end{equation}
for $0<\eps \leq 1$, $t \geq T$, and $0<y<1$.
\end{prop}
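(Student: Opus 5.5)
The plan is to treat the sine series defining $H_{\eps}$ as the restriction to integers of an entire function. I would then use the Poisson summation formula and shift the resulting Fourier integrals into the complex plane by the amount $\sigma = a/(\pi\eps)$. Write
\begin{equation}
    g_{\eps,t}(\xi)
    \coloneqq
    \frac{\pi\xi}{\omega_{\eps}(\xi)}\sinh(\omega_{\eps}(\xi))\,e^{-\eps\pi^2\xi^2 t},
    \qquad
    \omega_{\eps}(\xi)^2=\pi^2\xi^2+\frac{1}{2\eps^2}.
\end{equation}
Since $\sinh(\omega)/\omega$ is an entire function of $\omega^2$, $g_{\eps,t}$ is entire and odd in $\xi$. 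Hence
\begin{equation}
    H_{\eps}(t,y)=\frac{1}{2i}\sum_{n\in\Z}g_{\eps,t}(n)e^{i n\pi y}.
\end{equation}
Poisson summation should then give $H_{\eps}(t,y)=\frac{1}{2i}\sum_{k\in\Z}\int_{\R}g_{\eps,t}(\xi)e^{i\pi\xi(y-2k)}\,d\xi$. The Gaussian factor makes every integrand integrable and justifies the formula for fixed $\eps$ and $t\ge T$.

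In each integral I would move the contour to $\xi=u+i\sigma\,\mathrm{sgn}(y-2k)$. For $k\le 0$ the shift is upward, and for $k\ge1$ it is downward. The Gaussian decay on vertical segments makes the shift legitimate. On the shifted line we have $|e^{i\pi\xi(y-2k)}|=e^{-a|y-2k|/\eps}$, which equals $e^{-ay/\eps}$ for $k=0$ and is at most $e^{-a(2|k|-1)/\eps}\cdot e^{-ay/\eps}$ up to harmless factors for the other images. Likewise $|e^{-\eps\pi^2 t\xi^2}|=e^{ta^2/\eps}\,e^{-\eps\pi^2 t u^2}$. The sum over $k$ is then a geometric series dominated by the $k=0$ term, uniformly for $\eps\le1$.

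The key step, and the main obstacle, is bounding $|\sinh\omega_{\eps}(\xi)|\le e^{\Re\omega_{\eps}(\xi)}$ on the shifted line. There one has $\omega^2=A+iB$ with
\begin{equation}
    A=\pi^2u^2+\frac{q_a^2}{\eps^2},\qquad B=\pm\frac{2\pi a u}{\eps}.
\end{equation}
Using $\Re\sqrt{A+iB}=\sqrt{(\sqrt{A^2+B^2}+A)/2}\le\sqrt{A+B^2/(4A)}$ together with $\sqrt{X+Y}\le\sqrt X+Y/(2\sqrt X)$ and $A\ge q_a^2/\eps^2$, I expect to obtain
\begin{equation}
    \Re\omega\le\frac{q_a}{\eps}+\eps\pi^2u^2\left(\frac{1}{2q_a}+\frac{a^2}{2q_a^3}\right).
\end{equation}
The condition $a<1/2$ gives $q_a>1/2$, so the bracket is strictly less than $1+1=2<T\le t$. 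This is exactly where both hypotheses $T>2$ and $a<1/2$ enter. The resulting Gaussian $e^{-\eps\pi^2(t-\kappa_a)u^2}$, with $\kappa_a<2$, absorbs the growth, and the exponential factors combine to $e^{(q_a+ta^2-ay)/\eps}$.

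The prefactor satisfies $|\pi\xi/\omega|\le C_a\eps(|u|+a/(\pi\eps))$, because $|\omega|\ge q_a/\eps$ on the line. Integrating against the Gaussian of width $\eps^{-1/2}$ yields $O(\eps\cdot\eps^{-1}\cdot\eps^{-1/2})+O(1)=O(\eps^{-1/2})$. This is uniform in $t\ge T$ since $t-\kappa_a\ge T-\kappa_a>0$. Together with the summed images, this gives the bound \eqref{eq:H_localization}.
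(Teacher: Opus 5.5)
Your proposal is correct and is essentially the paper's proof. The shared steps are Poisson summation, a vertical contour shift by $a$ in the rescaled frequency $r=\pi\eps u$, and the same bound $\Re\omega\le \eps^{-1}\bigl(q_a + r^2/(4q_a^3)\bigr)$; your bracket $\frac{1}{2q_a}+\frac{a^2}{2q_a^3}$ equals exactly $\frac{1}{4q_a^3}<2$. These are followed by a Gaussian integral producing $\eps^{-1/2}$ and a geometric sum over images. The only cosmetic differences are that the paper pulls the factor $\pi n$ out by differentiating the Poisson formula in $y$, and it uses $|s/\rho(s)|\le 1$ instead of your cruder prefactor bound.

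One small slip: for $k\ge1$ and $y>1/2$, the image factor $e^{-a(2k-y)/\eps}$ is not bounded by $e^{-a(2k-1)/\eps}e^{-ay/\eps}$ up to a harmless constant. However, $2k-y\ge y+2(k-1)$ gives the bound $e^{-2a(k-1)/\eps}e^{-ay/\eps}$, so your geometric-series conclusion still holds.
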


\begin{proof}
    We first establish a bound for the real part of the square root that will appear later. Set
    \begin{equation}
        u_a(r)
        =
        \Re
        \sqrt{(r-ia)^2 +1/2}
        =
        \Re
        \sqrt{r^2 + q_a^2 - 2 i a r},
        \qquad
        r \in \R.
    \end{equation}
    Note that $u_a(r)>0$. By a simple calculation, we get
    \begin{equation}
        u_{a}(r)^2
        =
        \frac{\sqrt{(r^2 + q_{a}^2)^2 + (-2 a r)^2} + r^2 + q_a^2}{2}.
    \end{equation}
    Writing $B \coloneqq r^2 +q_a^2 > 0$ and using the inequality $\sqrt{1+x} \leq 1 + x / 2$ for $x \geq -1$, we get
    \begin{equation}
        u_{a}(r)^2
        =
        \frac{\sqrt{B^2 + 4 a^2 r^2} + B}{2}
        \leq
        B + \frac{a^2 r^2}{B}
        =
        r^2 + q_{a}^{2} + \frac{a^2 r^2}{r^2 + q_{a}^{2}}.
    \end{equation}
    Using
    \begin{equation}
        \frac{1}{2 q_{a}^{2}}
        -
        1
        -
        \frac{a^2}{r^2 + q_a^2}
        =
        \frac{1-2q_{a}^{2}}{2q_{a}^{2}}
        -
        \frac{a^2}{r^2 + q_a^2}
        =
        \left(
           \frac{1}{q_{a}^{2}}
           -
           \frac{1}{r^2 + q_{a}^{2}}
        \right)
        a^2
        \geq
        0,
    \end{equation}
    we further obtain
    \begin{equation}
        u_{a}(r)^2
        \leq
        q_{a}^{2} + \frac{r^2}{2 q_{a}^{2}}.
    \end{equation}
    Then $q_a>1/2$ and the inequality $\sqrt{1+x} \leq 1 + x / 2$ imply
    \begin{equation}\label{eq:localization_sqrt_bound}
        u_{a}(r)
        \leq
        q_a + \frac{r^2}{4 q_{a}^{3}}
        \leq
        q_a + 2 r^2.
    \end{equation}

    We next rewrite $H_{\eps}$ using the Poisson summation formula. For the Fourier transform, we use the convention
    \begin{equation}
        \widehat{f}(\zeta)
        \coloneqq
        \int_\R
        f(\xi)
        e^{-i \xi \zeta}
        \, d\xi.
    \end{equation}
    With this convention, the Poisson summation formula reads
    \begin{equation}\label{eq:PSF}
        \sum_{n\in\mathbb Z} f(\pi n)
        =
        \frac{1}{\pi}
        \sum_{j\in\mathbb Z}\widehat{f}(2j),
        \qquad
        f \in \mathcal S(\mathbb R);
    \end{equation}
    see~\cite[Vol.~I, Chapter~II, \S13]{Zygmund2002}. Let
    \begin{equation}
        b_{\eps,t}(\xi)
        \coloneqq
        \frac{
            \sinh \sqrt{\xi^2+\frac{1}{2\eps^2}}
        }
        {
            \sqrt{\xi^2+\frac{1}{2\eps^2}}
        }
        e^{-\eps t\left( \xi^2+\frac{1}{2\eps^2} \right)}.
    \end{equation}
    For any $\eps, t>0$, this is an even Schwartz function. Applying the Poisson summation formula~\eqref{eq:PSF} with $f(\xi) = b_{\eps,t}(\xi) e^{i \xi y}$, we obtain
    \begin{equation}\label{eq:PSF_applied}
        \sum_{n\in\mathbb{Z}}
        b_{\eps,t}(\pi n)e^{i \pi n y}
        =
        \frac{1}{\pi}
        \sum_{j\in\mathbb{Z}}
        \widehat b_{\eps,t}(2j-y)
        =
        \frac{1}{\pi}
        \sum_{j\in\mathbb{Z}}
        \widehat b_{\eps,t}(y+2j).
    \end{equation}
    Differentiation with respect to $y$ yields
    \begin{equation}\label{eq:PSF_applied_diff}
        \sum_{n\in\mathbb{Z}}
        i \pi n
        b_{\eps,t}(\pi n)e^{i \pi n y}
        =
        \frac{1}{\pi}
        \sum_{j\in\mathbb{Z}}
        \partial_{\zeta} \widehat b_{\eps,t}(y+2j).
    \end{equation}
    Hence the expression of $H_{\eps}$ in~\eqref{eq:infinite_boundary_interior_kernel} can be rewritten as
    \begin{equation}\label{eq:kernel_poisson}
        H_{\eps}(t,y)
        =
        -\frac{e^{t/(2\eps)}}{2}
        \sum_{n \in \Z}
        i \pi n b_{\eps,t}(\pi n) e^{i \pi n y}
        =
        -\frac{e^{t/(2\eps)}}{2 \pi}
        \sum_{j \in \Z}
        \partial_{\zeta} \widehat{b}_{\eps,t}(y+2j).
    \end{equation}

    We now prove a decay estimate of $\partial_{\zeta} \widehat b_{\eps,t}(\zeta)$. Writing $\rho(s) \coloneqq \sqrt{s^2+1/2}$ and making the change of variables as $\xi = s / \eps$, we get
    \begin{equation}\label{eq:kernel_fourier_integral}
        \widehat b_{\eps,t}(\zeta)
        =
        \int_\R
        \frac{\sinh(\rho(s)/\eps)}{\rho(s)}
        e^{-t(s^2 + 1/2) / \eps}
        e^{-i s \zeta / \eps}
        \, ds.
    \end{equation}
    Since the term in front of the oscillatory factor $e^{-i s \zeta / \eps}$ is entire in $s$ and decays rapidly as $|\Re s| \to \infty$ in any fixed horizontal strip, we are allowed to shift the contour of integration vertically. As in the method of steepest descent, this converts the oscillatory factor into exponential decay. First, consider $\zeta \geq 0$. Writing $s = r - i a$ with $r$ and $a$ real, we have
    \begin{equation}
        \left|
            \frac{s}{\rho(s)}
        \right|^2
        =
        \frac{r^2+a^2}{|(r-ia)^2+1/2|}
        \leq
        \frac{r^2+a^2}{r^2+q_a^2}
        \leq
        1.
    \end{equation}
    Moreover, since $|\sinh(z)| \leq e^{\Re z}$ for $\Re z \geq 0$,
    \begin{equation}
        \left|
            \sinh(\rho(s)/\eps)
        \right|
        \leq
        e^{u_a(r) / \eps}.
    \end{equation}
    Consequently, deforming the contour of integration to $\R - i a$ and using~\eqref{eq:localization_sqrt_bound}, we obtain
    \begin{align}\label{eq:kernel_fourier_bound}
        \begin{aligned}
            e^{t/(2\eps)}
            \left|
                \partial_\zeta \widehat b_{\eps,t}(\zeta)
            \right|
            &
            \leq
            \frac{1}{\eps}
            e^{(q_a + t a^2 - a \zeta) / \eps}
            \int_\R
            e^{-(t-2) r^2 / \eps}
            \, dr
            \\
            &
            =
            \frac{\sqrt{\pi}}{\sqrt{\eps (t-2)}}
            e^{(q_a + t a^2 - a \zeta) / \eps}.
        \end{aligned}
    \end{align}
    By evenness of $b_{\eps,t}$, the same estimate holds for all $\zeta \in \R$ with $\zeta$ replaced by $|\zeta|$. Combining this with~\eqref{eq:kernel_poisson},~\eqref{eq:kernel_fourier_bound}, and the bound
    \begin{align}
        \sum_{j \in \Z}
        e^{-a | y + 2j | / \eps}
        &
        =
        \sum_{j=0}^{\infty}
        e^{-a (y + 2j) / \eps}
        +
        \sum_{j=1}^{\infty}
        e^{-a (2j - y) / \eps}
        \\
        &
        =
        \frac{e^{- a y / \eps} + e^{- a (2 - y) / \eps}}{1 - e^{-2a / \eps}}
        \\
        &
        \leq
        \frac{2 e^{-a y / \eps}}{1 - e^{-2a}}
        \qquad
        (0<y<1),
    \end{align}
    we obtain~\eqref{eq:H_localization} with the choice
    \begin{equation}
        C_{T,a}
        =
        \frac{1}{\sqrt{\pi (T-2)} (1-e^{-2a})}.
    \end{equation}
\end{proof}

\section{Uniform observability via boundary-to-interior estimates}\label{sec:proof_upper}
With the preparation in the previous section, we are now ready to complete the proof of Theorems~\ref{thm:main_positive} and~\ref{thm:main_negative}. We begin with the positive-speed case.

\begin{thm}\label{prop:positive_cost_bound}
    For every $T>2$, there exists a constant $K_T>0$ such that
    \begin{equation}\label{eq:positive_cost_bound}
        C_{\eps}^{1}(T)
        =
        C_{\eps}(T,1,1)
        \leq
        K_T
        \sqrt{\eps}
        \exp
        \left(
            -\frac{(T-2)^2}{4T \eps}
        \right)
        \qquad
        (0 < \eps \leq 1),
    \end{equation}
    where the cost $C_{\eps}(T,1,1)$ is defined by~\eqref{eq:cost}.
\end{thm}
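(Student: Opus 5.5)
By Proposition~\ref{prop:duality} with $\eta=1$, it suffices to show that
\[
\int_0^1|z(T,x)|^2\,dx\le K_T^2\,\eps\,e^{-(T-2)^2/(2T\eps)}\int_0^T|\eps z_x(t,0)|^2\,dt
\]
for all admissible $z_0$. I would first prove this for finite modal sums $z=\sum_{n\le N}c_ne^{-\lambda_n^\eps t}\phi_n^{\eps}$, with constants independent of $N$. Such $z_0$ are dense in $H_0^1\cap H^2$, and both sides are continuous in that topology (the boundary flux through trace estimates on $[0,T]$), so the inequality then extends to all admissible $z_0$. For a finite sum, set $q=\eps z_x(\cdot,0)\in\mathcal E_{\eps,N}$. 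Lemma~\ref{prop:finite-infinite} gives $\int_0^\infty|q|^2\le C_T\int_0^T|q|^2$ with $C_T=T^2/(T^2-4)$. Then~\eqref{eq:BtoI_op}, \eqref{eq:BtoI_op_HS} and Proposition~\ref{prop:limit_hs_norm} give
\[
\|z(T)\|^2\le A_\eps^{1}(T)\,C_T\int_0^T|q|^2 ,
\]
uniformly in $N$. So the whole task reduces to an estimate $A_\eps^1(T)\lesssim \eps\, e^{-(T-2)^2/(2T\eps)}$.

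To bound $A_\eps^1(T)$, I would insert the localization estimate of Proposition~\ref{lem:H_localization} at $y=1-x$, with a parameter $0<a<1/2$ still to be chosen. This gives
\[
A_\eps^1(T)\le 4C_{T,a}^2\eps^{-1}e^{2q_a/\eps}\int_T^\infty e^{-t(1/2-2a^2)/\eps}\,dt\int_0^1e^{-x/\eps}e^{-2a(1-x)/\eps}\,dx .
\]
Since $2a<1$, the $x$-integral is at most $\eps e^{-2a/\eps}/(1-2a)$. Since $a<1/2$, we have $2a^2<1/2$, so the $t$-integral equals $\eps e^{-T(1/2-2a^2)/\eps}/(1/2-2a^2)$. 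The powers of $\eps$ combine to $\eps^{-1}\cdot\eps\cdot\eps=\eps$, which is exactly the $\sqrt\eps$ in~\eqref{eq:positive_cost_bound}. The exponent is $f_T(a)/\eps$, where
\[
f_T(a)=2\sqrt{1/2-a^2}-2a-\tfrac T2+2Ta^2 .
\]

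The main obstacle is choosing $a=a(T)\in(0,1/2)$ so that $f_T(a)\le -(T-2)^2/(2T)$, while the constants $C_{T,a}$, $1/(1-2a)$ and $1/(1/2-2a^2)$ depend only on $T$. As $a\to0$ we get $f_T\to\sqrt2-T/2$, which is negative only for $T>2\sqrt2$. So a genuinely positive $a$ is needed near $T=2$, where the $-2a$ term has to beat both the loss from $q_a$ and the growth $2Ta^2$. I would first try the natural scale $a=(T-2)/(2T)$, together with the expansion $\sqrt{1/2-a^2}\le \frac1{\sqrt2}-\frac{a^2}{\sqrt2}$. Since $\sqrt2>1$, though, this crude form may not reach exactly the stated exponent. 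If the clean exponent does not come out, I would go back to the sharper intermediate bound $u_a(r)\le q_a+r^2/(4q_a^3)$ from the proof of Proposition~\ref{lem:H_localization}, and redo the Gaussian in $r$ and the optimization in $a$ jointly. With $a$ fixed in terms of $T$, all the prefactors become a constant $K_T$, and taking square roots gives~\eqref{eq:positive_cost_bound}.
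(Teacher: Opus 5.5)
\textbf{There is a gap: you never produce an admissible $a$, and the choice you propose fails exactly near the threshold $T=2$.}

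The rest of your argument matches the paper's proof. The reduction via finite modal sums, Lemma~\ref{prop:finite-infinite}, \eqref{eq:BtoI_op}--\eqref{eq:BtoI_op_HS} and Proposition~\ref{prop:limit_hs_norm} gives $C_\eps^1(T)^2\le C_T A_\eps^1(T)$, followed by density. Inserting Proposition~\ref{lem:H_localization} at $y=1-x$ and your evaluation of the two integrals, giving the exponent $f_T(a)=2q_a-2a-T/2+2Ta^2$, are also correct.

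For $a=(T-2)/(2T)$ one has $2a=1-2/T$ and $2Ta^2=(T-2)^2/(2T)$, so $f_T(a)=2q_a-3+4/T$. This tends to $\sqrt2-1>0$ as $T\to2^+$, so the exponent is not even negative there. For example, at $T=2.5$ you get $a=0.1$, $q_a=0.7$ and $f_T(a)=0$, whereas the target is $-0.05$. The choice also fails for large $T$; at $T=6$, $f_T(1/3)\approx-1.09>-4/3$.

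Your fallback cannot repair this. In the proof of Proposition~\ref{lem:H_localization}, the exponent $q_a+ta^2-a\zeta$ comes from $r=0$, where $u_a(0)=q_a$ exactly. Sharpening the $r^2$-coefficient of the bound on $u_a(r)$ only changes the Gaussian prefactor $\sqrt{\eps/(t-c)}$, never the exponential rate.

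The missing idea is that near $T=2$ the parameter must be close to $1/2$, not close to $0$. Only then does the localization factor $e^{-2a(1-x)/\eps}$ combine with the spatial weight $e^{-x/\eps}$ of the eigenfunctions to cancel the growth $e^{2q_a/\eps}\approx e^{1/\eps}$ uniformly in $x$. The fix is elementary:
\begin{itemize}
\item Use $q_a=\sqrt{1/2-a^2}\le 1-a$, which holds because $(1-a)^2-(1/2-a^2)=2(a-1/2)^2\ge0$ and is tight at $a=1/2$.
\item This gives $f_T(a)\le 2-4a-T/2+2Ta^2$, a quadratic in $a$ minimized at $a=1/T\in(0,1/2)$.
\item At that point the bound equals $2-T/2-2/T=-(T-2)^2/(2T)$, exactly the required exponent.
\item With $a=1/T$, the constants $C_{T,a}$, $1/(1-2a)$ and $1/(1/2-2a^2)$ are finite (the last because $T>2$) and depend only on $T$. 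This completes the proof as in the paper.
\end{itemize}
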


By Proposition~\ref{prop:duality} and the scaling identity~\eqref{eq:scaling_cost}, Theorem~\ref{prop:positive_cost_bound} proves~\eqref{thm:main_positive_zero}. Together with the lower bound proved in Section~\ref{sec:proof}, it therefore completes the proof of Theorem~\ref{thm:main_positive}.

\begin{proof}
    Fix $T>2$. We first establish the bound
    \begin{equation}\label{eq:cost_bound_by_A}
        C_{\eps}^{\eta}(T)^2
        \leq
        C_T
        A_{\eps}^{\eta}(T)
        \qquad
        (\eps>0, \ \eta = \pm 1),
    \end{equation}
    where $C_T$ is the constant in~\eqref{eq:finite-infinite}. Let $z_N$ be the solution to~\eqref{eq:transport-diffusion-adjoint} with the initial data
    \begin{equation}
        z_{0,N}
        =
        \sum_{n=1}^{N}
        c_n \varphi_{n}^{\eps,\eta}.
    \end{equation}
    Write $q_N(t) = \eps \partial_x z_N(t,0)$. Then by~\eqref{eq:BtoI_op},~\eqref{eq:BtoI_op_HS}, Lemma~\ref{prop:finite-infinite}, and Proposition~\ref{prop:limit_hs_norm}, we obtain
    \begin{align}\label{eq:finite_sum_observability_A}
        \begin{aligned}
            \int_{0}^{1}
            |z_N(T,x)|^2
            \, dx
            &
            \leq
            \|
                \mathcal R_{\eps,N}^{\eta}(T)
            \|_{\mathrm{op}}^2
            \int_{0}^{\infty}
            |q_N(t)|^2
            \, dt
            \\
            &
            \leq
            A_{\eps}^{\eta}(T)
            \int_{0}^{\infty}
            |q_N(t)|^2
            \, dt
            \\
            &
            \leq
            C_T A_{\eps}^{\eta}(T)
            \int_{0}^{T}
            |q_N(t)|^2
            \, dt
            \\
            &
            =
            C_T A_{\eps}^{\eta}(T)
            \int_{0}^{T} | \eps \partial_x z_N(t,0) |^2 \, dt.
        \end{aligned}
    \end{align}
    This estimate extends to every initial data $z_0 \in H_0^1(0,1) \cap H^2(0,1)$. In fact, expand $e^{\eta x / (2 \eps)} z_0$ in sine series as
    \begin{equation}
        e^{\eta x / (2 \eps)} z_0
        =
        \sum_{n=1}^{\infty}
        c_n
        \sin(n \pi x).
    \end{equation}
    Since $e^{-\eta x / (2 \eps)} \in W^{2,\infty}(0,1)$, the partial sum
    \begin{equation}
        z_{0,N}
        \coloneqq
        \sum_{n=1}^{N}
        c_n
        e^{-\eta x / (2 \eps)}
        \sin(n \pi x)
        =
        \sum_{n=1}^{N}
        c_n \varphi_{n}^{\eps,\eta}
    \end{equation}
    converges as $N \to \infty$ to $z_0$ in $H^2(0,1)$. Let $z$ and $z_N$ be the solutions to~\eqref{eq:transport-diffusion-adjoint} with the initial data $z_0$ and $z_{0,N}$, respectively. Then
    \begin{equation}
        z_N
        =
        \sum_{n=1}^{N}
        c_n e^{-\lambda_{n}^{\eps} t}
        \varphi_{n}^{\eps,\eta}
        \to
        z
        =
        \sum_{n=1}^{\infty}
        c_n e^{-\lambda_{n}^{\eps} t}
        \varphi_{n}^{\eps,\eta}       
    \end{equation}
    as $N \to \infty$ in $C([0,T];H^2(0,1))$. Continuity of the trace map $v \mapsto v_x(0)$ on $H^2(0,1)$ also implies
    \begin{equation}
        \eps \partial_x z_N(\cdot,0)
        \to
        \eps \partial_x z(\cdot,0)
    \end{equation}
    as $N \to \infty$ in $L^2(0,T)$. Passing to the limit in~\eqref{eq:finite_sum_observability_A}, we get
    \begin{equation}
        \int_{0}^{1}
        |z(T,x)|^2
        \, dx
        \leq
        C_T A_{\eps}^{\eta}(T)
        \int_{0}^{T}
        |\eps \partial_x z(t,0)|^2
        \, dt.
    \end{equation}
    This combined with Proposition~\ref{prop:duality} proves~\eqref{eq:cost_bound_by_A}.
    
    We now take $\eta = 1$ and estimate $A_{\eps}^{1}(T)$ using Proposition~\ref{lem:H_localization}. Set $a = 1 /T$. Since $T>2$, we have $0<a<1/2$. In particular, $q_a = \sqrt{1/2-a^2} < 1 - a$. Hence by Proposition~\ref{lem:H_localization}, we have
    \begin{equation}
        \left|
            H_{\eps}(t,1-x)
        \right|^2
        \leq
        C_{T,a}^2
        \eps^{-1}
        \exp
        \left(
            \frac{2 - 4 a + 2 t a^2 + 2 a x}{\eps}
        \right)
    \end{equation}
    for $t \geq T$ and $0 < x < 1$. Inserting this into~\eqref{eq:infinite_hs_bound} yields
    \begin{align}
        A_{\eps}^{1}(T)
        &
        \leq
        4 C_{T,a}^2 \eps^{-1}
        e^{(2 - 4 a) / \eps}
        \int_T^{\infty}
        e^{-\alpha t / \eps}
        \, dt
        \int_0^1
        e^{-\beta x / \eps}
        \, dx
        \\
        &
        \leq
        \frac{4 C_{T,a}^2}{\alpha \beta}
        \eps
        \exp
        \left(
            \frac{2 - 4 a - \alpha T}{\eps}
        \right),
    \end{align}
    where
    \begin{equation}
        \alpha
        \coloneqq
        \frac{1}{2}
        -
        2 a^2>0,
        \qquad
        \beta
        \coloneqq
        1
        -
        2a>0.
    \end{equation}
    The exponent
    \begin{equation}
        2 - 4 a - \alpha T
        =
        2 - \frac{T}{2} - 4 a + 2 T a^2
    \end{equation}
    is minimized at $a = 1 / T \in (0,1/2)$. For this choice $a = 1 / T$, we obtain
    \begin{equation}
        2 - 4 a - \alpha T
        =
        2 - \frac{T}{2} - \frac{2}{T}
        =
        -\frac{(T-2)^2}{2T}.
    \end{equation}
    Combining this estimate with~\eqref{eq:cost_bound_by_A} proves~\eqref{eq:positive_cost_bound} with
    \begin{equation}
        K_T
        =
        2 C_{T,a} \sqrt{\frac{C_T}{\alpha \beta}}.
    \end{equation}
\end{proof}

We next prove the upper bound for negative speed. By the scaling identity~\eqref{eq:scaling_cost}, it suffices to prove the following.

\begin{thm}\label{prop:negative_cost_bound}
    For every $T \geq 2+2\sqrt{2}$, there exists a constant $K_T>0$ such that
    \begin{equation}\label{eq:negative_cost_bound}
        C_{\eps}^{-1}(T)
        =
        C_{\eps}(T,1,-1)
        \leq
        K_T
        \sqrt{\eps}
        \exp
        \left(
            -\frac{T - 2 - 2 \sqrt{2}}{4 \eps}
        \right)
        \qquad
        (0 < \eps \leq 1).
    \end{equation}
\end{thm}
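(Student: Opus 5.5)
The plan is to combine the general bound~\eqref{eq:cost_bound_by_A}, established in the proof of Theorem~\ref{prop:positive_cost_bound} for both signs $\eta=\pm1$, with a crude estimate of $A_{\eps}^{-1}(T)$ that ignores all cancellation in the sine series $H_{\eps}$. Since $T \geq 2+2\sqrt{2} > 2$, Lemma~\ref{prop:finite-infinite} applies, and~\eqref{eq:cost_bound_by_A} gives $C_{\eps}^{-1}(T)^2 \leq C_T A_{\eps}^{-1}(T)$. It therefore suffices to show
\begin{equation}
    A_{\eps}^{-1}(T) \leq K \eps \exp\left( \frac{2+2\sqrt{2}-T}{2\eps} \right), \qquad 0<\eps\leq 1 .
\end{equation}
For $\eta=-1$ the spatial weight in~\eqref{eq:infinite_hs_bound} is $e^{x/\eps}$. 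It is largest at $x=1$, where $y=1-x$ is near $0$, so the localization estimate of Proposition~\ref{lem:H_localization} gives no gain there. For this reason I would use a pointwise bound on $H_{\eps}$ that is uniform in $y$.

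The first step is the pointwise bound on the terms of~\eqref{eq:infinite_boundary_interior_kernel}. Use $\sinh \omega \leq e^{\omega}/2$ and $\pi n/\omega_n^{\eps} \leq \sqrt{2}\,\eps\pi n$, which holds because $\omega_n^{\eps} \geq 1/(\sqrt{2}\eps)$. The key observation is then the concavity bound
\begin{equation}
    \omega_n^{\eps} = \sqrt{\frac{1}{2\eps^2} + \pi^2 n^2} \leq \frac{1}{\sqrt{2}\eps} + \frac{\eps \pi^2 n^2}{\sqrt{2}},
\end{equation}
which comes from $\sqrt{a^2+b} \leq a + b/(2a)$. It yields
\begin{equation}
    |H_{\eps}(t,y)| \leq \frac{\sqrt{2}}{2}\, e^{1/(\sqrt{2}\eps)} \sum_{n\geq1} \eps\pi n\, e^{-\eps\pi^2 n^2 (t - 1/\sqrt{2})}.
\end{equation}
For $t \geq T > 1/\sqrt{2}$, comparing the series with $\int_0^\infty \eps \pi \xi\, e^{-\eps\pi^2 (T-1/\sqrt{2})\xi^2}\,d\xi$ (plus the value at the maximum, which is $O(\sqrt{\eps})$) shows that the sum is bounded by a constant depending only on $T$. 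Hence $|H_{\eps}(t,y)| \leq C_T e^{1/(\sqrt{2}\eps)}$ uniformly for $t\geq T$, $0<y<1$, and $0<\eps\leq1$. This is the step where sharpness is decided. The naive bound $\omega_n^{\eps} \leq \pi n + 1/(\sqrt{2}\eps)$ would produce an extra factor $e^{1/(4\eps t)}$ and miss the threshold, whereas the concavity bound makes the maximal exponent over $n$ exactly $1/(\sqrt{2}\eps)$ as soon as $t > 1/\sqrt{2}$.

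Next, insert this bound into~\eqref{eq:infinite_hs_bound}:
\begin{equation}
    A_{\eps}^{-1}(T) \leq 4 C_T^2 e^{\sqrt{2}/\eps} \int_T^\infty e^{-t/(2\eps)}\,dt \int_0^1 e^{x/\eps}\,dx \leq 8 C_T^2\, \eps^2 \exp\left( \frac{1+\sqrt{2} - T/2}{\eps} \right).
\end{equation}
The exponent equals $-(T-2-2\sqrt{2})/(2\eps)$, which is nonpositive for $T \geq 2+2\sqrt{2}$. Taking square roots gives $C_{\eps}^{-1}(T) \leq K_T\,\eps\,\exp(-(T-2-2\sqrt{2})/(4\eps))$, and $\eps \leq \sqrt{\eps}$ for $\eps\leq1$ yields~\eqref{eq:negative_cost_bound}.

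The only delicate point is the uniform control of $\sum_n \eps\pi n\, e^{-\eps\pi^2 n^2 (t-1/\sqrt{2})}$ in $\eps$ and $t\geq T$. This is routine, since the summand decreases in $t$ and the sum is $O(1)$ by integral comparison. I expect no genuine obstacle beyond choosing the concavity bound for $\omega_n^{\eps}$ rather than the subadditive one.
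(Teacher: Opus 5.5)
Your proposal is correct and follows the paper's proof: you reduce via \eqref{eq:cost_bound_by_A} to bounding $A_{\eps}^{-1}(T)$, use the concavity bound $\omega_n^{\eps} \leq 1/(\sqrt{2}\eps) + \eps\pi^2 n^2/\sqrt{2}$, and estimate $H_{\eps}$ termwise with no cancellation. The only difference is that you bound the prefactor by $\pi n/\omega_n^{\eps} \leq \sqrt{2}\,\eps\pi n$ rather than by $1$, which removes the $\eps^{-1/2}$ factor from the bound on $H_{\eps}$ and gives the slightly stronger prefactor $\eps$ in place of $\sqrt{\eps}$.
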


\begin{remark}
    As in the positive-speed case, we may apply Proposition~\ref{lem:H_localization} to obtain a similar bound. However, a cruder and more direct estimate suffices for the negative-speed case, as we shall see below. We also remark that an application of~\cite[Theorem~2.1]{Jacob2006} proves $\lim_{\eps \to 0}C_{\eps}^{-1}(T)=0$ for $T>2+2\sqrt{2}$.
\end{remark}

\begin{proof}[Proof of Theorem~\ref{prop:negative_cost_bound}]
    From the definition of $\omega_{n}^{\eps}$ in~\eqref{eq:omega_boundary_interior}, we have
    \begin{equation}
        \pi n \leq \omega_{n}^{\eps} \leq \frac{1}{\sqrt{2} \eps} + \frac{\eps \pi^2 n^2}{\sqrt{2}}.
    \end{equation}
    Then bounding each term in the definition of $H_{\eps}$ in~\eqref{eq:infinite_boundary_interior_kernel} by its absolute value, we obtain
    \begin{align}
        \left|
            H_{\eps}(t,y)
        \right|
        &
        \leq
        \frac{e^{1 / (\sqrt{2} \eps)}}{2}
        \sum_{n=1}^{\infty}
        e^{-\eps \pi^2 n^2 (t - 1 / \sqrt{2})}
        \\
        &
        \leq
        \frac{e^{1 / (\sqrt{2} \eps)}}{2}
        \int_{0}^{\infty}
        e^{-\eps \pi^2 (t - 1 / \sqrt{2}) x^2}
        \, dx
        \\
        &
        =
        \frac{e^{1 / (\sqrt{2} \eps)}}{4 \sqrt{\eps \pi (t - 1 / \sqrt{2})}}.
    \end{align}
    Plugging this into~\eqref{eq:infinite_hs_bound} yields
    \begin{equation}
        A_{\eps}^{-1}(T)
        \leq
        \frac{\eps}{2 \pi (T - 1 / \sqrt{2})}
        \exp
        \left(
            -\frac{T - 2 - 2 \sqrt{2}}{2 \eps}
        \right).
    \end{equation}
    Combining this estimate with~\eqref{eq:cost_bound_by_A} proves~\eqref{eq:negative_cost_bound} with
    \begin{equation}
        K_T
        =
        \sqrt{\frac{C_T}{2 \pi (T - 1 / \sqrt{2})}}.
    \end{equation}
\end{proof}

Combining Theorem~\ref{prop:negative_cost_bound} with the lower bound proved in Section~\ref{sec:negative_lower} completes the proof of Theorem~\ref{thm:main_negative}.

\section*{Acknowledgements}
Kai Koike has been supported by JSPS KAKENHI Grant Number 25K07077. The authors thank Franck Sueur for helpful discussion and valuable comments.

\section*{Statements and Declarations}

\noindent \textbf{Competing interests.}
The authors declare no competing interests.

\noindent \textbf{Data availability.}
No datasets were generated or analyzed during the current study.

\noindent \textbf{Use of generative AI.}
OpenAI's ChatGPT suggested the strategy for the positive-speed counterexample, including the use of model spaces. The upper-bound arguments were developed through discussions between the authors, with the AI assisting in carrying out calculations and refining the estimates. The negative-speed counterexample was constructed by the authors. The tool was also used for literature searches and manuscript preparation. The authors take full responsibility for the mathematical content and presentation of the final manuscript.

\bibliography{coron_guerrero_full}

\end{document}